\newtheorem{theorem}[equation]{Theorem}
\newtheorem{corollary}[equation]{Corollary}
\newtheorem{proposition}[equation]{Proposition}
\numberwithin{equation}{section}
\theoremstyle{definition}
\newtheorem{definition}[equation]{Definition}
\newtheorem*{example*}{Example}
\newtheorem{example}[equation]{Example}
\newtheorem{remark}[equation]{Remark}
\newtheorem*{remark*}{Remark}
\newcommand{\bN}{{\mathbb N}}
\newcommand{\bZ}{{\mathbb Z}}
\newcommand{\bO}{{\mathbb O}}
\newcommand{\frg}{{\mathfrak g}}
\newcommand{\frgtetra}{{{\mathfrak g}_{\boxtimes}}}
\newcommand{\frt}{{\mathfrak t}}
\newcommand{\frs}{{\mathfrak s}}
\newcommand{\frl}{{\mathfrak l}}
\newcommand{\calS}{{\mathcal S}}
\newcommand{\calA}{{\mathcal A}}
\newcommand{\calL}{{\mathcal L}}
\newcommand{\calK}{{\mathcal K}}
\newcommand{\calV}{{\mathcal V}}
\newcommand{\calB}{{\mathcal B}}
\DeclareMathOperator{\eespan}{span}
\providecommand{\espan}[1]{\eespan\left\{ #1\right\}}
 \newcommand{\tri}{\mathfrak{tri}}
   \newcommand{\lrt}{\mathfrak{lrt}}
    \newcommand{\inlrt}{\mathfrak{inlrt}}
 \newcommand{\frsl}{{\mathfrak{sl}}}
 \newcommand{\frso}{{\mathfrak{so}}}
 \newcommand{\frgl}{{\mathfrak{gl}}}
 \newcommand{\frd}{{\mathfrak{d}}}
 \DeclareMathOperator{\ad}{ad}
 \newcommand{\der}{\mathfrak{der}}
 \newcommand{\sder}{\mathfrak{sder}}
 \DeclareMathOperator{\frsldos}{{{\mathfrak{sl}_2}}}
 \DeclareMathOperator{\Aut}{Aut}
  \DeclareMathOperator{\Fix}{Fix}
 \DeclareMathOperator{\alg}{alg}
\newenvironment{romanenumerate}
 {\begin{enumerate}
 
 }{\end{enumerate}}
\begin{document}

\title[Lie algebras with $S_3$ or $S_4$-action]{Lie algebras with $S_3$ or $S_4$-action, and generalized Malcev algebras}

\author[Alberto Elduque]{Alberto Elduque$^{\star}$}
 \thanks{$^{\star}$ Supported by the Spanish Ministerio de
 Educaci\'{o}n y Ciencia
 and FEDER (MTM 2007-67884-C04-02) and by the
Diputaci\'on General de Arag\'on (Grupo de Investigaci\'on de
\'Algebra)}
 \address{Departamento de Matem\'aticas e
 Instituto Universitario de Matem\'aticas y Aplicaciones,
 Universidad de Zaragoza, 50009 Zaragoza, Spain}
 \email{elduque@unizar.es}

\author[Susumu Okubo]{Susumu Okubo$^{\ast}$}
 \thanks{$^{\ast}$ Supported in part by U.S.~Department of Energy Grant No.
 DE-FG02-91ER40685.}
 \address{Department of Physics and Astronomy, University of
 Rochester, Rochester, NY 14627, USA}
 \email{okubo@pas.rochester.edu}

\date{January 15, 2008}

\begin{abstract}
Lie algebras endowed with an action by automorphisms of any of the symmetric groups $S_3$ or $S_4$ are considered, and their decomposition into a direct sum of irreducible modules for the given action is studied.

In case of $S_3$-symmetry, the Lie algebras are coordinatized by some nonassociative systems, which are termed \emph{generalized Malcev algebras}, as they extend the classical Malcev algebras. These systems are endowed with a binary and a ternary products, and include both the Malcev algebras and the Jordan triple systems.
\end{abstract}

\maketitle


\section*{Introduction}

The Tetrahedron algebra is an infinite dimensional Lie algebra which
is endowed with a natural action by automorphisms of $S_4$, the
symmetric group of degree $4$ (see \cite{Edinburgh}). Lie algebras
with such an action have been investigated by the authors in
\cite{EO}, where it is shown that these Lie algebras are
coordinatized by a class of nonassociative algebras that include, as
a very important case, the structurable algebras introduced by
Allison \cite{Allison}. These algebras are very useful in providing models of simple Lie algebras,  models which reflect a $S_4$-symmetry (see \cite{EO} and the references there in).

In \cite[Problem 20.10]{ItoPaul} the authors posed the question of how the
Tetrahedron algebra decomposes into a direct sum of irreducible
modules for $S_4$. The first aim of this paper is to study the
decomposition into direct sums of irreducible $S_4$-modules of any
Lie algebra endowed with an action of $S_4$ by automorphisms. This
will be done in Section \ref{se:S4}. The answer for the Tetrahedron
algebra is particularly simple, as only the two irreducible three
dimensional modules for $S_4$ live inside the Lie algebra. This will
be checked in Section \ref{se:Tet}.

On the other hand, given any Lie algebra endowed with an action of
$S_4$ by automorphisms, Klein's $4$-group induces a grading of the
Lie algebra over $\bZ_2\times\bZ_2$. The $(\bar 0,\bar 0)$-component
of this grading, that is, the subspace of elements fixed by any
element in Klein's $4$-group, is naturally endowed with an action of
the symmetric group $S_3$ by automorphisms. Lie algebras with such
an action are studied in Section \ref{se:S3}, where the \emph{generalized Malcev algebras} are introduced.

As already noticed by Mikheev \cite{Mikheev} and Grishkov
\cite{Grishkov}, Malcev algebras appear as coordinate algebras of
Lie algebras with an action of $S_3$ by automorphisms satisfying an
extra condition, which in our terms translates into the fact that, as a module for $S_3$, there is no submodule isomorphic to the alternating one
dimensional module for $S_3$. This will be the subject of Section
\ref{se:S3Malcev}, where the examples of Malcev algebras and Jordan triple systems are discussed. Different examples of Lie algebras with an action
of $S_3$ by automorphisms and of their coordinate algebras will be
given in the last Section \ref{se:S3examples}.

\smallskip

Throughout the paper, all the vector spaces considered will be
defined over a ground field $k$ of characteristic $\ne 2,3$.
Unadorned tensor products will be defined over $k$.

\smallskip

Recall that the symmetric group $S_4$  is the semidirect product of
Klein's $4$-group $V_4=\langle \tau_1,\tau_2\rangle$ and the
symmetric group $S_3=\langle \varphi,\tau\rangle$. Here
\[
\begin{split}
\tau_1&=(12)(34),\\
\tau_2&=(23)(14),\\
\varphi&=(123): 1\mapsto 2\mapsto 3\mapsto 1\\
\tau&=(12).
\end{split}
\]
The symmetric group $S_3$ appears both as a subgroup and as a
quotient $S_4/V_4$ of the symmetric group $S_4$.

There are exactly five non-isomorphic irreducible modules for the
symmetric group $S_4$ (see, for instance, \cite{FH}):
\begin{itemize}
\item $U=ku$, the \emph{trivial module}: $\sigma(u)=u$ for any $\sigma\in
S_4$.
\item $U'=ku'$, the \emph{alternating module}: $\sigma(u')=(-1)^\sigma u'$
for any $\sigma\in S_4$, where $(-1)^\sigma$ denotes the signature
of the permutation $\sigma$.
\item $W=\{(\alpha_1,\alpha_2,\alpha_3)\in k^3: \alpha_1+\alpha_2+\alpha_3=0\}$,
the two dimensional irreducible module. This is a natural module for
$S_3$
($\sigma\bigl((\alpha_1,\alpha_2,\alpha_3)\bigr)=(\alpha_{\sigma^{-1}(1)},
\alpha_{\sigma^{-1}(2)},\alpha_{\sigma^{-1}(3)})$ and hence a module
for $S_4$ in which the elements of $V_4$ act trivially.
\item $V=\{(\alpha_1,\alpha_2,\alpha_3,\alpha_4)\in k^4:
\alpha_1+\alpha_2+\alpha_3+\alpha_4=0\}$, the \emph{standard
module}.
\item $V'=U'\otimes V$.
\end{itemize}

The modules $U$, $U'$ and $W$ form a family of representatives of
the isomorphism classes of the irreducible modules for $S_3$.

\bigskip
\section{Lie algebras with $S_4$-action}\label{se:S4}

Let $\frg$ be a Lie algebra over our ground field $k$  endowed with
a group homomorphism
\[
S_4\rightarrow \Aut(\frg).
\]

As in \cite{EO}, the action of Kleins's $4$-group $V_4$ gives a
$\bZ_2\times\bZ_2$-grading on $\frg$:
\begin{equation}\label{eq:z22grading}
\frg=\frt\oplus\frg_0\oplus\frg_1\oplus\frg_2,
\end{equation}
where
\begin{equation}\label{eq:tg0g1g2}
\begin{split}
\frt&=\{x\in\frg : \tau_1(x)=x,\, \tau_2(x)=x\}\
       (=\frg_{(\bar 0,\bar 0)}),\\
\frg_0&=\{x\in\frg : \tau_1(x)=x,\, \tau_2(x)=-x\}\
        (=\frg_{(\bar 1,\bar 0)}),\\
\frg_1&=\{x\in\frg : \tau_1(x)=-x,\, \tau_2(x)=x\}\
        (=\frg_{(\bar 0,\bar 1)}),\\
\frg_2&=\{x\in\frg : \tau_1(x)=-x,\, \tau_2(x)=-x\}\
        (=\frg_{(\bar 1,\bar 1)}).
\end{split}
\end{equation}
(Here, the subindices $0,1,2$ must be considered modulo $3$.)

Assume that the action of $V_4$ is not trivial, as otherwise the
$S_4$-action is just an action of $S_3\simeq S_4/V_4$, and let then
$A$ denote the subspace $\frg_0$. For $x\in \frg_0$, define
\begin{equation}\label{eq:iota123}
\iota_0(x)=x\in \frg_0,\quad \iota_1(x)=\varphi(\iota_0(x))\in
\frg_1,\quad \iota_2(x)=\varphi^2(\iota_0(x))\in\frg_2.
\end{equation}
Thus
\[
\frg=\frt\oplus \bigl(\oplus_{i=0}^2\iota_i(A)\bigr).
\]

As shown in \cite{EO}, $A$ becomes an algebra with involution
$(A,\cdot,-)$ where:
\begin{itemize}
\item The involution (involutive antiautomorphism) is given by
\[
\iota_0(\bar x)=-\tau(\iota_0(x))
\]
for any $x\in A$. Note that $\tau(\frg_0)= \frg_0$ and that this
gives immediately, since $\varphi\tau=\tau\varphi^2$, the following
actions:
\begin{equation}\label{eq:bar}
\tau(\iota_0(x))=-\iota_0(\bar x),\quad
\tau(\iota_1(x))=-\iota_2(\bar x),\quad
\tau(\iota_2(x))=-\iota_1(\bar x).
\end{equation}

\item The multiplication is given by
\begin{equation}\label{eq:multiplication}
\iota_0(\overline{x\cdot y})=[\iota_1(x),\iota_2(y)],
\end{equation}
for any $x,y\in A$.
\end{itemize}

Moreover, consider the space of \emph{Lie related triples} (see \cite{AllisonFaulkner}):
\[
\begin{split}
\lrt(A,\cdot,-)&=\{(d_0,d_1,d_2)\in\frgl(A)^3:\\
&\qquad \bar d_i(x\cdot y)=d_{i+1}(x)\cdot y+x\cdot d_{i+2}(y),\
\forall x,y\in A,\ \forall i=0,1,2\},
\end{split}
\]
where
\[
\bar d(x)=\overline{d(\bar x)}.
\]
This space $\lrt(A,\cdot,-)$
is a Lie algebra under componentwise bracket, and it comes endowed
with the action of $S_3$ by automorphisms given by:
\begin{equation}\label{eq:phitaulrt}
\begin{split}
\varphi\bigl((d_0,d_1,d_2)\bigr)&=(d_2,d_0,d_1),\\
\tau\bigl((d_0,d_1,d_2)\bigr)&=(\bar d_0,\bar d_2,\bar d_1),
\end{split}
\end{equation}
which is compatible with the action of $S_3$ on $\frt$, that is,
\[
\rho\bigl(\sigma(d)\bigr)=\sigma\bigl(\rho(d)\bigr),
\]
for any $\sigma\in S_3$ and $d\in\frt$ (see \cite[eqs. (1.8) and
(2.3)]{EO}), where $\rho$ is the linear map:
\[
\begin{split}
\rho: \frt&\longrightarrow \frgl(A)^3\\
d&\mapsto \bigl(\rho_0(d),\rho_1(d),\rho_2(d)\bigr),
\end{split}
\]
with
\[\iota_i\bigl(\rho_i(d)(x)\bigr)=[d,\iota_i(x)]
\]
for any $d\in
\frt$ and $x\in A$. It turns out that $\rho$ is a Lie algebra
homomorphism and that $\rho(\frt)$ is contained in $\lrt(A,\cdot,-)$.
Moreover, there appears a skew-symmetric bilinear map
\[
\begin{split}
\delta:A\times A&\longrightarrow \lrt(A,\cdot,-)\\
(x,y)\, &\mapsto \rho\bigl([\iota_0(x),\iota_0(y)]\bigr)
     =\bigl(\delta_0(x,y),\delta_1(x,y),\delta_2(x,y)\bigr).
\end{split}
\]

\begin{theorem}\label{th:deltasNLRTA} \textup{(\cite[Theorem
2.4]{EO})}
For any $a,b,x,y,z\in A$ and $i,j=0,1,2$:
\begin{romanenumerate}
\item
$\bigl[\delta_i(a,b),\delta_j(x,y)\bigr]
 =\delta_j\bigl(\delta_{i-j}(a,b)(x),y\bigr)+
 \delta_j(x,\delta_{i-j}(a,b)(y)\bigr)$,
\item $\delta_0(\bar x,y\cdot z)+\delta_{1}(\bar y,z\cdot x)
   +\delta_{2}(\bar z,x\cdot y)=0$,
\item $\delta_0(x,y)(z)+\delta_0(y,z)(x)+\delta_0(z,x)(y)=0$,
\item $\delta_1(x,y)=L_{\bar y}L_x-L_{\bar x}L_y$,
\item $\delta_2(x,y)=R_{\bar y}R_x-R_{\bar x}R_y$,
\item $\overline{\delta_i(x,y)}=\delta_{-i}(\bar x,\bar y)$ (or
$\tau\bigl(\delta(x,y)\bigr)=\delta(\bar x,\bar y)$),
\end{romanenumerate}
where $L_x$ and $R_x$ denote, respectively, the left and right
multiplication by $x$ in the algebra $(A,\cdot)$.
\end{theorem}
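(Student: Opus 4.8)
The plan is to translate every assertion into the Lie algebra $\frg$ by means of the maps $\iota_i$ and $\rho$, and then to read each identity off from the Jacobi identity in $\frg$ together with the $\bZ_2\times\bZ_2$-grading. The dictionary I would set up first consists of two ingredients. On the one hand, applying the automorphism $\varphi$ to \eqref{eq:multiplication} and using $\varphi(\iota_k)=\iota_{k+1}$ gives the three cyclic product relations $[\iota_0(x),\iota_1(y)]=\iota_2(\overline{x\cdot y})$, $[\iota_1(x),\iota_2(y)]=\iota_0(\overline{x\cdot y})$, $[\iota_2(x),\iota_0(y)]=\iota_1(\overline{x\cdot y})$. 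On the other hand, since $\delta(a,b)=\rho([\iota_0(a),\iota_0(b)])$, the intertwining $\rho\circ\varphi=\varphi\circ\rho$ yields $\rho([\iota_k(u),\iota_k(v)])=\varphi^k(\delta(u,v))$, and hence the realization $\iota_j\bigl(\delta_i(a,b)(w)\bigr)=[[\iota_{j-i}(a),\iota_{j-i}(b)],\iota_j(w)]$ for every $j$ (the $j$-th component of $\varphi^{j-i}(\delta(a,b))$ being $\delta_i(a,b)$).

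With this dictionary, parts (iii)--(vi) are short. For (iii) I would apply $\iota_0$ to the claimed identity; its left-hand side becomes the cyclic sum $[[\iota_0(x),\iota_0(y)],\iota_0(z)]+[[\iota_0(y),\iota_0(z)],\iota_0(x)]+[[\iota_0(z),\iota_0(x)],\iota_0(y)]$, which is zero by the Jacobi identity, and injectivity of $\iota_0$ finishes it. For (iv) and (v) I would expand $\iota_1\bigl(\delta_1(x,y)(z)\bigr)=[[\iota_0(x),\iota_0(y)],\iota_1(z)]$ (resp.\ through $\iota_2$) by the Jacobi identity into two nested brackets and evaluate the inner ones with the cyclic product relations and the antiautomorphism rule $\overline{u\cdot v}=\bar v\cdot\bar u$; the double products $\bar x\cdot(y\cdot z)$ and $(z\cdot y)\cdot\bar x$ reassemble exactly into $L_{\bar y}L_x-L_{\bar x}L_y$ and $R_{\bar y}R_x-R_{\bar x}R_y$. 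For (vi) I would invoke the $\tau$-equivariance of $\rho$ expressed by \eqref{eq:phitaulrt} together with $\tau(\iota_0(x))=-\iota_0(\bar x)$ from \eqref{eq:bar}, so that $\tau\bigl(\delta(x,y)\bigr)=\rho\bigl([\tau\iota_0(x),\tau\iota_0(y)]\bigr)=\rho\bigl([\iota_0(\bar x),\iota_0(\bar y)]\bigr)=\delta(\bar x,\bar y)$.

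Part (ii) I would obtain by applying $\rho$ to a single Jacobi identity in $\frg$, namely the one for the triple $\iota_0(a),\iota_1(b),\iota_2(c)$. Replacing the three double brackets by $[\iota_2(\overline{a\cdot b}),\iota_2(c)]$, $[\iota_0(\overline{b\cdot c}),\iota_0(a)]$, $[\iota_1(\overline{c\cdot a}),\iota_1(b)]$ (all in $\frt$) and applying $\rho$, I would read off the degree-$0$ component; using $\rho([\iota_k(u),\iota_k(v)])=\varphi^k(\delta(u,v))$, the skew-symmetry of $\delta$, and the antiautomorphism rule, the substitution $a,b,c\mapsto\bar a,\bar b,\bar c$ followed by the relabelling $x=a$, $y=c$, $z=b$ turns that component into precisely the asserted relation.

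The main work is part (i). Realizing both $\delta_i(a,b)$ and $\delta_j(x,y)$ through $\iota_j$, for $w\in A$ one finds
\[
\iota_j\bigl([\delta_i(a,b),\delta_j(x,y)](w)\bigr)
=[D_{ab},[D_{xy},\iota_j(w)]]-[D_{xy},[D_{ab},\iota_j(w)]],
\]
where $D_{ab}=[\iota_{j-i}(a),\iota_{j-i}(b)]$ and $D_{xy}=[\iota_0(x),\iota_0(y)]$ both lie in $\frt$; by the Jacobi identity this collapses to $[[D_{ab},D_{xy}],\iota_j(w)]$, so that $[\delta_i(a,b),\delta_j(x,y)]=\rho_j\bigl([D_{ab},D_{xy}]\bigr)$. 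I would then expand $[D_{ab},[\iota_0(x),\iota_0(y)]]$ by the derivation property of $\ad$ into $[[D_{ab},\iota_0(x)],\iota_0(y)]+[\iota_0(x),[D_{ab},\iota_0(y)]]$, use that $\rho_0(D_{ab})=\delta_{i-j}(a,b)$ (the degree-$0$ component of $\rho(D_{ab})=\varphi^{j-i}(\delta(a,b))$) to write $[D_{ab},\iota_0(x)]=\iota_0\bigl(\delta_{i-j}(a,b)(x)\bigr)$, and finally apply $\rho_j$ together with $\rho_j([\iota_0(u),\iota_0(v)])=\delta_j(u,v)$ to arrive at $\delta_j\bigl(\delta_{i-j}(a,b)(x),y\bigr)+\delta_j\bigl(x,\delta_{i-j}(a,b)(y)\bigr)$. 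The delicate point throughout is the index bookkeeping: realizing $\delta_i(a,b)$ through the component $\iota_j$ forces the shift $j-i$ into $D_{ab}$, and one must keep the opposite shift $i-j$ consistent when identifying $\rho_0(D_{ab})$ with $\delta_{i-j}(a,b)$. Once the conventions are fixed, (i) is just a double application of the Jacobi identity.
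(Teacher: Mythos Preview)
Your argument is correct in every part: the dictionary you set up is exactly right (in particular the identification of the $j$-th component of $\varphi^{j-i}(\delta(a,b))$ with $\delta_i(a,b)$ and of its $0$-th component with $\delta_{i-j}(a,b)$), and the index bookkeeping in (i) and (ii) checks out. Note, however, that the paper does not actually prove this theorem; it quotes it verbatim from \cite[Theorem~2.4]{EO}, so there is no in-paper proof to compare against. Your approach---reducing each identity to an instance of the Jacobi identity in $\frg$ via the maps $\iota_i$ and $\rho$, together with the $S_3$-equivariance of $\rho$---is precisely the natural one and is the method used in the cited reference.
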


The $4$-tuple $(A,\cdot,-,\delta)$ is then called a \emph{normal Lie
related triple algebra} (see \cite{Oku05}), or normal LRTA for
short.

\smallskip

Conversely, given a normal LRTA $(A,\cdot,-,\delta)$, consider three
copies of $A$: $\iota_i(A)$, $i=0,1,2$, and the Lie subalgebra of
\emph{inner Lie related triples}:
\[
\inlrt(A,\cdot,-,\delta)=\sum_{i=0}^2\varphi^i\bigl(\delta(A,A)\bigr).
\]
Then the vector space
\[
\frg(A,\cdot,-,\delta)
=\inlrt(A,\cdot,-,\delta)\oplus\bigl(\oplus_{i=0}^2\iota_i(A)\bigr)
\]
is a Lie algebra \cite[Theorem 2.6]{EO} with bracket determined by:
\begin{equation}\label{eq:bracketgAdelta}
\begin{split}
&\bullet\ \text{$\inlrt(A,\cdot,-,\delta)$ is a Lie subalgebra},\\
&\bullet\ [(d_0,d_1,d_2),\iota_i(x)]=\iota_i\bigl(d_i(x)\bigr),\
\forall (d_0,d_1,d_2)\in\inlrt(A,\cdot,-,\delta),\ \forall x\in A,\\
&\bullet\ [\iota_i(x),\iota_{i+1}(y)]=\iota_{i+2}(\overline{x\cdot
y})\ \forall x,y\in A,\ \forall i=0,1,2,\\
&\bullet\ [\iota_i(x),\iota_i(y)]=\varphi^i\bigl(\delta(x,y)\bigr),
\forall x,y\in A,\ \forall i=0,1,2.
\end{split}
\end{equation}
Moreover, the symmetric group $S_4$ acts naturally by automorphisms
on $\frg(A,\cdot,-,\delta)$ by means of:
\begin{equation}\label{eq:S4gAdelta}
\begin{split}
&\bullet\ \text{$V_4$ acts trivially on $\inlrt(A,\cdot,-,\delta)$
and $\varphi$ and $\tau$ act by \eqref{eq:phitaulrt},}\\
&\bullet\ \varphi\bigl(\iota_i(x)\bigr)=\iota_{i+1}(x),\ \forall i=0,1,2,\\
&\bullet\ \tau(\iota_0(x))=-\iota_0(\bar x),\
\tau(\iota_1(x))=-\iota_2(\bar x),\
 \tau(\iota_2(x))=-\iota_1(\bar x),\\
&\bullet\ \tau_1\bigl(\iota_0(x)\bigr)=\iota_0(x),\
  \tau_1\bigl(\iota_i(x)\bigr)=-\iota_i(x)\ \text{for $i=1,2$},\\
&\bullet\ \tau_2\bigl(\iota_1(x)\bigr)=\iota_1(x),\
  \tau_2\bigl(\iota_i(x)\bigr)=-\iota_i(x)\ \text{for $i=0,2$},
\end{split}
\end{equation}
for any $x\in A$.

One can also consider the larger Lie algebra
\begin{equation}\label{eq:tildefrg}
\tilde\frg(A,\cdot,-,\delta)
=\lrt(A,\cdot,-,\delta)\oplus\bigl(\oplus_{i=0}^2\iota_i(A)\bigr)
\end{equation}
with the same bracket given in \eqref{eq:bracketgAdelta}, which
again is endowed with an action of $S_4$ by automorphisms given by
\eqref{eq:S4gAdelta}. Besides, if $\frg$ is a Lie algebra with an
action of $S_4$ and $(A,\cdot,-,\delta)$ is the associated normal
LRTA, then the Lie algebra homomorphism $\rho:\frt\rightarrow
\lrt(A,\cdot,-)$ extends to a Lie algebra homomorphism
\[
\tilde\rho:\frg\longrightarrow \tilde\frg(A,\cdot,-,\delta)
\]
compatible with the action of $S_4$, and such that
$\tilde\rho\bigl(\iota_i(x)\bigr)=\iota_i(x)$ for any $x\in A$ and
$i=0,1,2$.

\begin{proposition}
Let $\frg$ be a Lie algebra with $S_4$-action and let $\frg=\frt\oplus \frg_0\oplus\frg_1\oplus\frg_2$ be the associated $\bZ_2\times\bZ_2$-grading as in \eqref{eq:z22grading}. Assume that $\frg_0\ne 0$ and
that $\frg$ has no proper ideals invariant under the action of
$S_4$ (this happens, in particular, if $\frg$ is simple). Then the
homomorphism $\tilde\rho$ is one-to-one and its image is
$\frg(A,\cdot,-,\delta)$.
\end{proposition}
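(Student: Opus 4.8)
The plan is to analyze the kernel and the image of $\tilde\rho$ separately, the only substantive input in each case being the hypothesis that $\frg$ has no proper $S_4$-invariant ideals. First I would settle injectivity. Since $\tilde\rho$ restricts to $\rho$ on $\frt$ and to the identity on each $\iota_i(A)$, and since $\rho(\frt)\subseteq\lrt(A,\cdot,-)$ sits in the $(\bar 0,\bar 0)$-component of $\tilde\frg(A,\cdot,-,\delta)$, which is complementary to the $\iota_i(A)$, a comparison of graded components shows that $\ker\tilde\rho=\ker\rho\subseteq\frt$. Being the kernel of a Lie algebra homomorphism compatible with the $S_4$-action, $\ker\tilde\rho$ is an ideal of $\frg$ invariant under $S_4$. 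Because $\tilde\rho$ is the identity on $\frg_0=\iota_0(A)$, we have $\ker\tilde\rho\cap\frg_0=0$; as $\frg_0\ne 0$ by hypothesis, $\ker\tilde\rho$ cannot be all of $\frg$, so the absence of proper invariant ideals forces $\ker\tilde\rho=0$ and $\tilde\rho$ is one-to-one.

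It then remains to identify the image, which equals $\rho(\frt)\oplus\bigl(\oplus_{i=0}^2\iota_i(A)\bigr)$, with $\frg(A,\cdot,-,\delta)=\inlrt(A,\cdot,-,\delta)\oplus\bigl(\oplus_{i=0}^2\iota_i(A)\bigr)$; that is, to prove $\rho(\frt)=\inlrt(A,\cdot,-,\delta)$. The inclusion $\inlrt(A,\cdot,-,\delta)\subseteq\rho(\frt)$ is immediate: from the definition $\delta(x,y)=\rho\bigl([\iota_0(x),\iota_0(y)]\bigr)$ and the compatibility $\rho(\varphi(d))=\varphi(\rho(d))$ one obtains $\varphi^i\bigl(\delta(x,y)\bigr)=\rho\bigl([\iota_i(x),\iota_i(y)]\bigr)$, and $[\iota_i(A),\iota_i(A)]\subseteq\frt$ by the $\bZ_2\times\bZ_2$-grading, so each $\varphi^i(\delta(A,A))$ lies in $\rho(\frt)$.

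For the reverse inclusion I would show that $\frt$ is spanned by the brackets $[\iota_i(A),\iota_i(A)]$. Set $\frc=\sum_{i=0}^2[\iota_i(A),\iota_i(A)]\subseteq\frt$ and consider $\frg'=\frc\oplus\bigl(\oplus_{i=0}^2\iota_i(A)\bigr)$, the subalgebra of $\frg$ generated by the odd components. Using the grading relations $[\frt,\iota_i(A)]\subseteq\iota_i(A)$ and $[\iota_i(A),\iota_i(A)]\subseteq\frt$ together with the Jacobi identity, one checks that $\frg'$ is an ideal of $\frg$; and since $\oplus_{i=0}^2\iota_i(A)$ is $S_4$-invariant (the action permutes the $\iota_i(A)$ up to the involution and signs), so is $\frc$, whence $\frg'$ is an $S_4$-invariant ideal containing $\frg_0\ne 0$. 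The hypothesis therefore forces $\frg'=\frg$, so $\frt=\frc$, and applying $\rho$ together with the identity of the previous paragraph yields $\rho(\frt)=\sum_i\varphi^i\bigl(\delta(A,A)\bigr)=\inlrt(A,\cdot,-,\delta)$, so the image of $\tilde\rho$ is exactly $\frg(A,\cdot,-,\delta)$. The grading bookkeeping and the verification that $\frg'$ is an ideal are routine; the point requiring the most care, and the only place where the hypotheses genuinely enter, is this second appeal to the absence of proper $S_4$-invariant ideals, i.e.\ the claim that $\frg$ is generated by its odd graded components $\oplus_{i=0}^2\iota_i(A)$.
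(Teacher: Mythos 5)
Your proof is correct and follows essentially the same route as the paper's: injectivity comes from the kernel being an $S_4$-invariant ideal (necessarily not all of $\frg$ since $\tilde\rho$ is the identity on $\frg_0\ne 0$), and the image is identified by observing that the subalgebra generated by $\frg_0\oplus\frg_1\oplus\frg_2$ is an $S_4$-invariant ideal, hence all of $\frg$. The only difference is one of detail: you make explicit what the paper leaves implicit, namely that this generated subalgebra equals $\bigl(\sum_i[\iota_i(A),\iota_i(A)]\bigr)\oplus\bigl(\oplus_i\iota_i(A)\bigr)$ and that $\rho(\frt)=\inlrt(A,\cdot,-,\delta)$ then follows from $\varphi^i\bigl(\delta(x,y)\bigr)=\rho\bigl([\iota_i(x),\iota_i(y)]\bigr)$.
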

\begin{proof}
Since $\tilde\rho$ is a homomorphism of Lie algebras with
$S_4$-action, its kernel $\ker\tilde\rho$ is an ideal invariant
under the action of $S_4$, so it is trivial. Hence $\tilde\rho$ is
one-to-one. On the other hand, the subalgebra of $\frg$ generated by
$\frg_0\oplus\frg_1\oplus\frg_2$ is an ideal invariant under $S_4$,
so it is the whole $\frg$, and hence $\tilde\rho(\frg)$ is the
subalgebra generated by $\oplus_{i=0}^2\iota_i(A)$, which is
precisely $\frg(A,\cdot,-,\delta)$.
\end{proof}

\smallskip

Now we are ready to decompose the subspace $\frg_0\oplus\frg_1\oplus \frg_2$ in \eqref{eq:z22grading} as a
direct sum of irreducible modules for the action of $S_4$.

\begin{theorem}\label{th:g0g1g2}
Let $\frg$ be a Lie algebra endowed with an action of $S_4$ by
automorphisms, and let $0\ne x\in A=\frg_0$. Then:
\begin{romanenumerate}
\item If $\bar x=-x$, then $\sum_{i=0}^2k\iota_i(x)$ is a
$S_4$-module isomorphic to the standard module $V$.
\item If $\bar x=x$, then $\sum_{i=0}^2k\iota_i(x)$ is a
$S_4$-module isomorphic to $V'=U'\otimes V$.
\end{romanenumerate}
\end{theorem}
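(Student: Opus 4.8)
The plan is to show that $M:=\sum_{i=0}^{2}k\iota_i(x)$ is a three-dimensional $S_4$-submodule, to read off the action of the generators of $S_4$ on the basis $\{\iota_0(x),\iota_1(x),\iota_2(x)\}$ directly from \eqref{eq:S4gAdelta}, and then to identify $M$ with $V$ or $V'$ by a short character computation. First I would check dimension and invariance. Since $\varphi$ is an automorphism and $x\neq0$, each $\iota_i(x)=\varphi^i(\iota_0(x))$ is nonzero; as $\iota_i(x)\in\frg_i$ sit in distinct homogeneous components of the $\bZ_2\times\bZ_2$-grading \eqref{eq:z22grading}, the three vectors are linearly independent and $\dim M=3$. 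Invariance under $\varphi$, $\tau_1$ and $\tau_2$ is immediate from \eqref{eq:S4gAdelta}: $\varphi$ permutes the $\iota_i(x)$ cyclically and $\tau_1,\tau_2$ scale each $\iota_i(x)$ by $\pm1$. Invariance under $\tau$ is the one place the hypothesis enters: $\tau(\iota_0(x))=-\iota_0(\bar x)$, $\tau(\iota_1(x))=-\iota_2(\bar x)$, $\tau(\iota_2(x))=-\iota_1(\bar x)$ lie in $M$ exactly when $\bar x$ is a scalar multiple of $x$, i.e. when $\bar x=\pm x$. Since $S_4=\langle\tau_1,\tau_2,\varphi,\tau\rangle$, this makes $M$ an $S_4$-submodule in both cases.

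Writing $e_i:=\iota_i(x)$, the formulas \eqref{eq:S4gAdelta} give, in both cases, $\varphi\colon e_0\mapsto e_1\mapsto e_2\mapsto e_0$ together with the diagonal action $\tau_1=\diag(1,-1,-1)$ and $\tau_2=\diag(-1,1,-1)$; thus $e_0,e_1,e_2$ realize the three nontrivial characters of $V_4$, and $M|_{V_4}$ is the same in (i) and (ii) (as it must be, since $U'$ is trivial on $V_4$). The two cases are separated only by the action of the transposition $\tau$: if $\bar x=-x$ then $\tau(e_0)=e_0,\ \tau(e_1)=e_2,\ \tau(e_2)=e_1$, whereas if $\bar x=x$ then $\tau(e_0)=-e_0,\ \tau(e_1)=-e_2,\ \tau(e_2)=-e_1$. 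In other words case (ii) is case (i) twisted by the sign character of $S_4$.

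To finish I would invoke characters. Since $\charac k\neq2,3$ does not divide $\lvert S_4\rvert=24$, the algebra $k[S_4]$ is semisimple; moreover $V$ and $V'$ are defined over the prime field, so they stay absolutely irreducible, non-isomorphic, and determined by their traces, and it suffices to match a few trace values. A transposition has trace $1$ on $V$ and $-1$ on $V'$ (distinct because $\charac k\neq2$), while a $3$-cycle has trace $0$ and a double transposition trace $-1$ on both. From the formulas above, $\varphi$ has trace $0$ and $\tau_1$ trace $-1$ on $M$, consistent with either module, while $\tau$ has trace $1$ on $M$ in case (i) and $-1$ in case (ii). Hence $M\cong V$ when $\bar x=-x$ and $M\cong V'=U'\otimes V$ when $\bar x=x$. (If one prefers an explicit isomorphism, the assignment $e_0\mapsto(1,1,-1,-1)$, $e_1\mapsto-(1,-1,-1,1)$, $e_2\mapsto(1,-1,1,-1)\in V$ is $S_4$-equivariant in case (i), and its $U'$-twist works in case (ii).) There is no serious obstacle here; the only real care is in keeping the signs of the $\tau$-action correct, since that single sign is precisely what distinguishes $V$ from $V'$.
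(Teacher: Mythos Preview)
Your argument is correct. The main difference from the paper's proof is one of emphasis: the paper simply writes down the explicit $S_4$-isomorphism $\iota_0(x)\mapsto(1,1,-1,-1)$, $\iota_1(x)\mapsto(-1,1,1,-1)$, $\iota_2(x)\mapsto(1,-1,1,-1)$ (respectively its $U'$-twist) and is done, while you develop a character argument first and only record that isomorphism parenthetically. Both routes are valid; the explicit map is shorter, but your character computation has the virtue of making clear \emph{why} only $V$ and $V'$ can occur (the trace $-1$ of $\tau_1$ rules out any contribution from $U$, $U'$, $W$, forcing $M$ to equal one of the two three-dimensional irreducibles, and then the trace of $\tau$ separates them). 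One small point: the phrase ``it suffices to match a few trace values'' deserves a word of justification along the lines just sketched, since matching on a proper subset of conjugacy classes does not in general determine a representation. Also, the formulas you need for the $S_4$-action on $\iota_i(x)$ in a general $\frg$ are those in \eqref{eq:tg0g1g2}, \eqref{eq:iota123} and \eqref{eq:bar}; the reference \eqref{eq:S4gAdelta} is for the constructed algebra $\frg(A,\cdot,-,\delta)$, though the content is of course identical.
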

\begin{proof}
Equations \eqref{eq:tg0g1g2}, \eqref{eq:iota123} and \eqref{eq:bar}
show that in both cases the subspace $\sum_{i=0}^2k\iota_i(x)$ is invariant under
the action of $S_4$. In the first case the assignment
\[
\iota_0(x)\mapsto (1,1,-1,-1),\ \iota_1(x)\mapsto (-1,1,1,-1),\
\iota_2(x)\mapsto (1,-1,1,-1),
\]
 provides the required isomorphism,
while in the second case, the assignment $\iota_0(x)\mapsto
u'\otimes (1,1,-1,-1),\ \ldots$, gives the result.
\end{proof}

\begin{corollary}\label{co:g0g1g2}
Let $\frg$ be a Lie algebra endowed with an action of $S_4$ by
automorphisms. Then
$\oplus_{i=0}^2\iota_i(A)=\frg_0\oplus\frg_1\oplus\frg_2$ is a
direct sum of copies of the two irreducible three-dimensional
modules for $S_4$.
\end{corollary}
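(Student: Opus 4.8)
The plan is to derive Corollary \ref{co:g0g1g2} directly from the preceding Theorem \ref{th:g0g1g2} by an involution-eigenspace argument. The starting observation is that the bar map $x\mapsto\bar x$ is an involutive antiautomorphism of $A=\frg_0$, and since the ground field $k$ has characteristic $\ne 2$, the space $A$ decomposes as the direct sum of its two eigenspaces under this involution:
\[
A=A^+\oplus A^-,\qquad A^+=\{x\in A:\bar x=x\},\quad A^-=\{x\in A:\bar x=-x\}.
\]
Any $x\in A$ can be written uniquely as $x=\tfrac12(x+\bar x)+\tfrac12(x-\bar x)$, with the first summand in $A^+$ and the second in $A^-$, so this is a genuine direct sum decomposition.

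Next I would pass from this decomposition of $A$ to a decomposition of $\oplus_{i=0}^2\iota_i(A)$. Since each $\iota_i$ is a linear isomorphism of $A$ onto $\frg_i$ (with $\frg_0=A$), we get $\oplus_{i=0}^2\iota_i(A)=\bigl(\oplus_{i=0}^2\iota_i(A^+)\bigr)\oplus\bigl(\oplus_{i=0}^2\iota_i(A^-)\bigr)$, and this equals $\frg_0\oplus\frg_1\oplus\frg_2$ by the definition \eqref{eq:iota123} and the remark following it. Now choose a basis $\{x_j\}$ of $A^-$ and a basis $\{y_l\}$ of $A^+$. For each basis vector, Theorem \ref{th:g0g1g2} tells us that $\sum_{i=0}^2k\iota_i(x_j)$ is an $S_4$-submodule isomorphic to the standard module $V$ (case $\bar x=-x$), while $\sum_{i=0}^2k\iota_i(y_l)$ is an $S_4$-submodule isomorphic to $V'$ (case $\bar x=x$).

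Finally I would assemble these pieces into the asserted direct sum. The spaces $\sum_{i=0}^2k\iota_i(x_j)$, as $x_j$ ranges over the chosen basis of $A^-$, together span $\oplus_{i=0}^2\iota_i(A^-)$, and because the $\iota_i$ are injective and $\frg_0,\frg_1,\frg_2$ lie in distinct components of the $\bZ_2\times\bZ_2$-grading \eqref{eq:z22grading}, these three-dimensional submodules meet only in $0$ and their sum is direct; the same reasoning applies to the $\sum_{i=0}^2k\iota_i(y_l)$ spanning $\oplus_{i=0}^2\iota_i(A^+)$. Combining the two families yields $\frg_0\oplus\frg_1\oplus\frg_2$ as a direct sum of submodules, each of which is isomorphic to $V$ or to $V'$, both three-dimensional irreducible $S_4$-modules, which is exactly the assertion.

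The argument is essentially routine once Theorem \ref{th:g0g1g2} is in hand, so there is no serious obstacle; the only point requiring a little care is the directness of the sum of the individual three-dimensional pieces. This directness is not automatic from irreducibility alone but follows from the directness of the decomposition $A=A^-\oplus A^+$ into bases together with the injectivity of each $\iota_i$: a relation among elements of different $\sum_i k\iota_i(x)$ projects, via the grading, to independent relations in each $\frg_i$, which force all coefficients to vanish.
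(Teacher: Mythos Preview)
Your proof is correct and is precisely the natural elaboration of what the paper leaves implicit: the paper states the corollary with no proof, treating it as an immediate consequence of Theorem~\ref{th:g0g1g2}, and your eigenspace decomposition $A=A^+\oplus A^-$ together with a choice of bases is exactly how one fills in the details. There is nothing to add.
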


\smallskip

Let us turn now our attention to the action of $S_4$ (actually of
$S_3\simeq S_4/V_4$) on the Lie algebra $\lrt(A,\cdot,-,\delta)$.

On the two one-dimensional irreducible modules (the trivial one $U$
and the alternating one $U'$ for $S_4$ (or $S_3$)), the cycle
$\varphi$ acts trivially, while $\varphi$ acts with minimal
polynomial $X^2+X+1$ on the two-dimensional irreducible module. But
for $(A,\cdot,-)$ an algebra with involution:
\[
\begin{split}
\{ (d_0,d_1,d_2)&\in\lrt(A,\cdot,-):
\varphi\bigl((d_0,d_1,d_2)\bigr)=(d_0,d_1,d_2)\}\\
&=\{(d,d,d)\in\frgl(A)^3: \bar d(x\cdot y)=d(x)\cdot y+x\cdot d(y)\
\forall x,y\in A\}\\
&=\{(d,d,d)\in\frgl(A)^3:  d(x* y)=d(x)* y+x* d(y)\ \forall x,y\in
A\}
\end{split}
\]
where $x*y=\overline{x\cdot y}$ for any $x,y\in A$. Hence, the
subspace of fixed elements by $\varphi$ in $\lrt(A,\cdot,-)$ is
naturally isomorphic to the Lie algebra of derivations of the
algebra $(A,*)$:
\[
\Fix_{\lrt(A,\cdot,-)}(\varphi)\simeq \der(A,*).
\]

Given an algebra with involution $(A,\cdot,-)$, $\der(A,\cdot,-)$
will denote the Lie algebra:
\[
\der(A,\cdot,-)=\{d\in \der(A,\cdot): \bar d=d\}.
\]

\begin{proposition}\label{pr:S3lrtA}
Let $(A,\cdot,-)$ be an algebra with involution. Then:
\begin{romanenumerate}
\item The direct sum of the trivial $S_4$-submodules of $\lrt(A,\cdot,-)$
is the subspace $ \{(d,d,d): d\in \der(A,\cdot,-)\}$.

\item The direct sum of the $S_4$-submodules of $\lrt(A,\cdot,-)$
isomorphic to the alternating module is $ \{(d,d,d): d\in
\sder(A,\cdot,-)\}$, where $\sder(A,\cdot,-)$ is the subspace of
skew-derivations of $(A,\cdot,-)$:
\[
\sder(A,\cdot,-)=\{ d\in \frgl(A): d(x\cdot y)=-d(x)\cdot y- x\cdot
d(y)\ \text{and}\ \bar d=-d\}.
\]

\item The direct sum of the irreducible $S_4$-submodules of
$\lrt(A,\cdot,-)$ isomorphic to the two-dimensional irreducible
module $W$ is
\[
\{(d_0,d_1,d_2)\in \lrt(A,\cdot,-): d_0+d_1+d_2=0\}.
\]
\end{romanenumerate}
\end{proposition}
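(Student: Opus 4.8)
The plan is to analyze the $S_3$-action on $\lrt(A,\cdot,-)$ by means of the cyclic operator $\varphi$, which permutes the three components $(d_0,d_1,d_2)\mapsto(d_2,d_0,d_1)$, together with the involution $\tau$ from \eqref{eq:phitaulrt}. Since $\varphi$ has order $3$ and the ground field has characteristic $\ne 3$, I would first decompose $\lrt(A,\cdot,-)$ into eigenspaces of $\varphi$: the fixed subspace $\Fix(\varphi)$ (where $\varphi$ acts as $1$) and the complementary subspace on which $\varphi$ acts with minimal polynomial $X^2+X+1$. The latter subspace is exactly $\{(d_0,d_1,d_2):d_0+d_1+d_2=0\}$, since a triple with $d_0=d_1=d_2$ is fixed by $\varphi$, and a triple is fixed precisely when $d_0=d_1=d_2$; subtracting off the $\varphi$-average $\frac13(d_0+d_1+d_2)$ in each slot gives the projection onto the complement. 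This immediately identifies the component of part (iii): the $W$-isotypic part is the subspace where $\varphi$ acts without fixed vectors, which is where the sum of the components vanishes.

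For parts (i) and (ii), I would restrict attention to $\Fix(\varphi)=\{(d,d,d):d\in\frgl(A)\}$, which the discussion immediately preceding the proposition already identifies with $\der(A,*)$ via the defining relation $d(x*y)=d(x)*y+x*d(y)$, where $x*y=\overline{x\cdot y}$. On this subspace only the trivial and alternating $S_4$-modules can occur, so the remaining task is to split $\der(A,*)$ according to how $\tau$ acts. Here I would use the second formula in \eqref{eq:phitaulrt}: on a fixed triple $\tau\bigl((d,d,d)\bigr)=(\bar d,\bar d,\bar d)$, so $\tau$ acts as $d\mapsto\bar d$. The trivial submodules are the $+1$-eigenvectors, i.e.\ those $d$ with $\bar d=d$, and the alternating submodules are the $-1$-eigenvectors, those with $\bar d=-d$; again the splitting is clean because $\charac k\ne 2$.

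The main point to verify carefully is that these eigenspaces coincide with the claimed derivation and skew-derivation spaces, rather than merely being contained in them. For part (i), an element $(d,d,d)\in\Fix(\varphi)$ with $\bar d=d$ satisfies $d(x*y)=d(x)*y+x*d(y)$, and combining $\bar d=d$ with this relation I would check that $d$ is in fact an ordinary derivation of $(A,\cdot)$ that commutes with the involution, so $d\in\der(A,\cdot,-)$; conversely any such $d$ gives a fixed triple. For part (ii), an element with $\bar d=-d$ lying in $\Fix(\varphi)\cap\lrt$ should be shown to satisfy $d(x\cdot y)=-d(x)\cdot y-x\cdot d(y)$, i.e.\ $d\in\sder(A,\cdot,-)$, and conversely. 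The verification in (ii) is the step I expect to require the most care, since one must translate between the $*$-derivation condition on $\Fix(\varphi)$ and the $\cdot$-skew-derivation condition, using $x*y=\overline{x\cdot y}$ and $\bar d=-d$; the sign bookkeeping is where an error is most likely to slip in, but it is a direct computation with no conceptual obstacle once the eigenspace decomposition is in place.
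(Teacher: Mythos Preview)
Your approach is correct and essentially the same as the paper's: decompose $\lrt(A,\cdot,-)$ by the action of $\varphi$ (fixed triples versus the kernel of $1+\varphi+\varphi^2$) and then split the fixed part by the $\pm 1$-eigenspaces of $\tau$. The only difference is that you plan to route the verification in (i) and (ii) through the $*$-derivation condition on $\Fix(\varphi)$, whereas the paper (and you) can avoid this detour entirely: the defining relation of $\lrt$ applied to $(d,d,d)$ reads $\bar d(x\cdot y)=d(x)\cdot y+x\cdot d(y)$ directly, so imposing $\bar d=d$ or $\bar d=-d$ immediately yields the derivation or skew-derivation condition with no sign bookkeeping through $x*y=\overline{x\cdot y}$.
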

\begin{proof}
The direct sum of the trivial submodules is
\[
\begin{split}
\{(d_0,d_1,d_2)\in \lrt(A,\cdot,-)&:
\varphi\bigl((d_0,d_1,d_2)\bigr)=(d_0,d_1,d_2)=
\tau\bigl((d_0,d_1,d_2)\bigr)\}\\
&=\{(d_0,d_1,d_2)\in\lrt(A,\cdot,-): d_0=d_1=d_2=\bar d_0\}
\end{split}
\]
because of \eqref{eq:phitaulrt}, and this gives (i).

Also, the direct sum of the alternating submodules is
\[
\begin{split}
\{(d_0,d_1,d_2)\in \lrt(A,\cdot,-)&:
\varphi\bigl((d_0,d_1,d_2)\bigr)=(d_0,d_1,d_2)=
-\tau\bigl((d_0,d_1,d_2)\bigr)\}\\
&=\{(d_0,d_1,d_2)\in\lrt(A,\cdot,-): d_0=d_1=d_2=-\bar d_0\},
\end{split}
\]
and this proves (ii).

Finally, the direct sum of the $S_4$-submodules of $\lrt(A,\cdot,-)$
which are isomorphic to the two-dimensional irreducible module $W$
is the kernel of $1+\varphi+\varphi^2$, and this gives the result in
(iii).
\end{proof}

Let us pause to give an example of a normal LRTA
$(A,\cdot,-,\delta)$ where $\der(A,\cdot,-)$ is strictly contained
in the Lie algebra of derivations $\der(A,\cdot)$.

\begin{example}
Let $\frg$ be any $\bZ_2\times \bZ_2$-graded Lie algebra:
\[
\frg=\frg_{(\bar 0,\bar 0)}\oplus \frg_{(\bar 1,\bar 0)}
\oplus\frg_{(\bar 0,\bar 1)}\oplus\frg_{(\bar 1,\bar 1)},
\]
with $\frg_{(\bar 0,\bar 1)}\ne 0\ne \frg_{(\bar 1,\bar 1)}$, and
let $\nu$ and $\mu$ the order two automorphisms of $\frg$ given by:
\[
\begin{split}
\nu&=\begin{cases}id&\text{on $\frg_{(\bar 0,\bar
        0)}\oplus\frg_{(\bar 1,\bar 0)}$,}\\
        -id&\text{on $\frg_{(\bar 0,\bar 1)}\oplus\frg_{(\bar 1,\bar
        1)}$,}\end{cases}\\
\mu&=\begin{cases}id&\text{on $\frg_{(\bar 0,\bar
        0)}\oplus\frg_{(\bar 0,\bar 1)}$,}\\
        -id&\text{on $\frg_{(\bar 1,\bar 0)}\oplus\frg_{(\bar 1,\bar
        1)}$.}\end{cases}
\end{split}
\]
The symmetric group $S_4$ acts by automorphisms on $\frg^3$ as
follows:
\[
\begin{split}
\tau_1\bigl((x,y,z)\bigr)&=(x,\nu(y),\nu(z)),\\
\tau_2\bigl((x,y,z)\bigr)&=(\nu(x),y,\nu(z)),\\
\varphi\bigl((x,y,z)\bigr)&=(z,x,y),\\
\tau\bigl((x,y,z)\bigr)&=(\mu(x),\mu(z),\mu(y)).
\end{split}
\]
(Compare to \cite[Example 3.5]{EO}.) Then the decomposition in \eqref{eq:z22grading} is given by:
\[
\begin{split}
\frt&= \bigl(\frg_{(\bar 0,\bar 0)}
   \oplus\frg_{(\bar 1,\bar 0)}\bigr)^3,\\
\frg_0&=\{(x,0,0): x\in \frg_{(\bar 0,\bar 1)}
   \oplus\frg_{(\bar 1,\bar 1)}\},\\
\frg_1&=\{(0,x,0): x\in \frg_{(\bar 0,\bar 1)}
   \oplus\frg_{(\bar 1,\bar 1)}\},\\
\frg_2&=\{(0,0,x): x\in \frg_{(\bar 0,\bar 1)}
   \oplus\frg_{(\bar 1,\bar 1)}\},
\end{split}
\]
so that $A=\frg_{(\bar 0,\bar 1)} \oplus\frg_{(\bar 1,\bar 1)}$ is a
normal LRTA with
\[
x\cdot y=0,\qquad \bar x=-\mu(x),
\]
for any $x\in A$.

The Lie algebra of derivations of $(A,\cdot)$ is then the whole
general linear algebra:
\[
\der(A,\cdot)=\frgl(A),
\]
while
\[
\begin{split}
\der(A,\cdot,-)&=\{f\in\frgl(A): f(\bar x)=\overline{f(x)}\ \forall
x\}\\
 &=\{f\in\frgl(A): f\mu=\mu f\}\\
 &=\{f\in\frgl(A): f\bigl(\frg_{(\bar 0,\bar 1)}\bigr)\subseteq \frg_{(\bar 0,\bar
 1)},\ f\bigl(\frg_{(\bar 1,\bar 1)}\bigr)\subseteq \frg_{(\bar 1,\bar
 1)}\},
\end{split}
\]
and
\[
\begin{split}
\sder(A,\cdot,-)&=\{f\in\frgl(A): f\mu=-\mu f\}\\
 &=\{f\in\frgl(A): f\bigl(\frg_{(\bar 0,\bar 1)}\bigr)\subseteq \frg_{(\bar 1,\bar
 1)},\ f\bigl(\frg_{(\bar 1,\bar 1)}\bigr)\subseteq \frg_{(\bar 0,\bar
 1)}\}.\quad \qed
\end{split}
\]
\end{example}

\medskip

In \cite[Theorem 2.4]{EO} it is proved that the unital normal LRTA's
are precisely the structurable algebras. In this case, the previous
Proposition becomes simpler.

\begin{corollary}\label{co:S3lrta}
Let $(A,\cdot,-)$ be a unital algebra with involution, then \newline
$\sder(A,\cdot,-)=0$.

In particular, the alternating module does not appear in the
decomposition of $\lrt(A,\cdot,-)$ as a direct sum of irreducible
$S_3$-modules.
\end{corollary}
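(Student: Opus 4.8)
The plan is to prove directly that any skew-derivation of a unital algebra with involution must vanish, using only the presence of a unit together with the standing hypothesis $\charac k\ne 2,3$. Notice that the involution condition $\bar d=-d$ built into the definition of $\sder(A,\cdot,-)$ will not even be needed: the skew-derivation identity alone will do the work. So let $d\in\sder(A,\cdot,-)$, so that
\[
d(x\cdot y)=-d(x)\cdot y-x\cdot d(y)\qquad\text{for all }x,y\in A,
\]
and write $1$ for the unit of $(A,\cdot)$.

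First I would specialize this identity at $x=y=1$. Since $1\cdot 1=1$, the left-hand side is $d(1)$ and the right-hand side is $-d(1)-d(1)=-2d(1)$, so $d(1)=-2d(1)$, that is $3d(1)=0$. Because $\charac k\ne 3$ this forces $d(1)=0$.

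Next I would feed this back into the identity with $y=1$ and $x\in A$ arbitrary. Using $x\cdot 1=x$ on the left and $x\cdot d(1)=0$ on the right, the identity collapses to $d(x)=-d(x)$, hence $2d(x)=0$; since $\charac k\ne 2$ we conclude $d(x)=0$ for every $x\in A$. Therefore $d=0$, which gives $\sder(A,\cdot,-)=0$.

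The final assertion is then immediate from Proposition \ref{pr:S3lrtA}(ii): the direct sum of the $S_4$-submodules (equivalently, $S_3$-submodules) of $\lrt(A,\cdot,-)$ isomorphic to the alternating module is exactly $\{(d,d,d):d\in\sder(A,\cdot,-)\}$, which is now $0$, so the alternating module does not occur. There is essentially no genuine obstacle in this argument; the only thing worth flagging is that the two successive specializations really do require $\charac k\ne 3$ and $\charac k\ne 2$ respectively, which is precisely why the hypothesis on the ground field is imposed throughout the paper.
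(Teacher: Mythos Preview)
Your proof is correct and follows essentially the same approach as the paper's: both specialize the skew-derivation identity first at $x=y=1$ to force $d(1)=0$, then at $y=1$ to force $d=0$, and invoke Proposition~\ref{pr:S3lrtA}(ii) for the final clause. Your version is simply more explicit about where the characteristic hypotheses $\charac k\ne 2,3$ are used.
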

\begin{proof}
Any $d\in\sder(A,\cdot,-)$ satisfies $\bar d=-d$ and
\[
d(x\cdot y)=-d(x)\cdot y-x\cdot d(y)
\]
for any $x,y\in A$. With $x=y=1$ it follows that $d(1)=0$, and then
with $y=1$ that $d(x)=-d(x)$ for any $x$, so $d=0$.
\end{proof}

\smallskip

\begin{remark}
Given an algebra with involution $(A,\cdot,-)$, the Lie algebra
$\frl=\lrt(A,\cdot,-)$ decomposes as
\[
\frl=\frl_+\oplus\frl_-\oplus\frl_2,
\]
where $\frl_+$ is the subspace of fixed elements by the action of
any $\sigma\in S_3$ (that is, the sum of the trivial
$S_3$-submodules of $\frl$), $\frl_-$ is the direct sum of the
alternating $S_3$-submodules of $\frl$ and $\frl_2$ is the sum of
the $S_3$-submodules of $\frl$ which are isomorphic to the unique
two-dimensional irreducible module for $S_3$. Then any
$(d_0,d_1,d_2)\in\frl$ decomposes as
\[
(d_0,d_1,d_2)=(d,d,d)+(f,f,f)+(f_0,f_1,f_2),
\]
where $(d,d,d)\in \frl_+$, $(f,f,f)\in\frl_-$ and $(f_0,f_1,f_2)\in
\frl_2$ are given by:
\[
\begin{split}
d&=\frac{1}{6}\bigl((d_0+d_1+d_2)+(\bar d_0+\bar d_1+\bar
d_2)\bigr),\\
f&=\frac{1}{6}\bigl((d_0+d_1+d_2)-(\bar d_0+\bar d_1+\bar
d_2)\bigr),\\
f_i&=\frac{1}{3}(2d_i-d_{i+1}-d_{i+2})\quad\text{(indices modulo
$3$).}
\end{split}
\]
Note that indeed $(1+\varphi+\varphi^2)(f_0,f_1,f_2)=0$, so
$(f_0,f_1,f_2)\in \frl_2$ and, similarly, $(d,d,d)\in\frl_+$ and
$(f,f,f)\in\frl_-$.\hfill\qed
\end{remark}

\bigskip
\section{The Tetrahedron algebra}\label{se:Tet}

The results in the previous section, together with \cite{Edinburgh},
give an immediate answer to \cite[Problem 2.10]{ItoPaul}, which asks
for the decomposition of the Tetrahedron algebra into a direct sum
of irreducible $S_4$-modules.

Recall that the Tetrahedron algebra $\frgtetra$ has been defined in
\cite{HT05} in connection with the so called Onsanger algebra
introduced in \cite{Ons44}, in which the free energy of the two
dimensional Ising model was computed. The Tetrahedron algebra
$\frgtetra$ is the Lie algebra over $k$ with generators
\[
\bigl\{X_{ij}: i,j\in \{0,1,2,3\},\ i\ne j\bigr\}
\]
and relations
\[
\begin{split}
& X_{ij}+X_{ji}=0\ \text{for $i\ne j$,}\\
&[X_{ij},X_{jk}]=2(X_{ij}+X_{jk})\ \text{for mutually distinct
$i,j,k$,}\\
&[X_{hi},[X_{hi},[X_{hi},X_{jk}]]]=4[X_{hi},X_{jk}]\ \text{for
mutually distinct $h,i,j,k$.}
\end{split}
\]

Consider the basis
 $\bigl\{x=\bigl(\begin{smallmatrix}
-1&2\\0&1\end{smallmatrix}\bigr),\,
 y=\bigl(\begin{smallmatrix}
-1&0\\-2&1\end{smallmatrix}\bigr),\,
 z=\bigl(\begin{smallmatrix}
1&0\\0&-1\end{smallmatrix}\bigr)\bigr\}$ of the three dimensional
simple Lie algebra $\frsldos$. Then \cite[Proposition 6.5 and
Theorem 11.5]{HT05} the Tetrahedron algebra is isomorphic to the
three-point loop algebra $\frsl_2\otimes\calA$,
$\calA=k[t,t^{-1},(t-1)^{-1}]$, by means of the isomorphism
\begin{equation}\label{eq:Psi}
\Psi:\frgtetra\rightarrow \frsl_2\otimes\calA,
\end{equation}
determined by:
\[
\begin{split}
\Psi(X_{12})=&x\otimes 1,\quad \Psi(X_{23})=y\otimes 1,\quad
   \Psi(X_{31})=z\otimes 1,\\
&\Psi(X_{03})=y\otimes t +z\otimes (t-1),\\
&\Psi(X_{01})=z\otimes t'+x\otimes (t'-1),\\
&\Psi(X_{02})=x\otimes t''+y\otimes (t''-1),
\end{split}
\]
where
\begin{equation}\label{eq:tes}
t'=1-t^{-1}\quad\text{and}\quad t''=(1-t)^{-1}.
\end{equation}

The symmetric group $S_4$ embeds naturally in the group of
automorphisms $\Aut(\frgtetra)$ by means of
\[
\sigma(X_{ij})=X_{\sigma(i)\sigma(j)},
\]
for any $\sigma\in S_4$ and $0\leq i<j\leq 3$. Here we identify $0$
and $4$. On the other hand, $S_4$ embeds as a group of automorphisms
of $\frg=\frsl_2\otimes \calA$ as follows (\cite[Theorem
1.4]{Edinburgh}):
\begin{romanenumerate}
\item $\varphi=\varphi_\frs\otimes\varphi_\calA$, where
$\varphi_\frs$ is the order $3$ automorphism of $\frsldos$ given by
\[
\varphi_\frs(x)=y,\quad \varphi_\frs(y)=z,\quad \varphi_\frs(z)=x,
\]
and $\varphi_\calA$ is the order $3$ automorphism of the $k$-algebra
$\calA$ determined by
\[
\varphi_\calA(t)=1-t^{-1}=t'.
\]

\item $\tau=\tau_\frs\otimes\tau_\calA$, where $\tau_\frs$ is the
order $2$ automorphism of $\frsldos$ given by
\[
\tau_\frs(x)=-x,\quad \tau_\frs(y)=-z,\quad \tau_\frs(z)=-y,
\]
and $\tau_\calA$ is the order $2$ automorphism of $\calA$ determined
by $\tau_\calA(t)=1-t$.

\item $\tau_1$ is the automorphism of $\frg$, as a Lie algebra over
$\calA$, given by
\begin{equation}\label{eq:tau1}
\begin{split}
\tau_1(x\otimes 1)&=-x\otimes 1,\\
\tau_1(y\otimes 1)&=-\bigl(z\otimes t'+x\otimes (t'-1)\bigr),\\
\tau_1(z\otimes 1)&=x\otimes t''+y\otimes (t''-1).
\end{split}
\end{equation}

\item $\tau_2$ is the automorphism of $\frg$, as a Lie algebra over
$\calA$, given by
\begin{equation}\label{eq:tau2}
\begin{split}
\tau_2(x\otimes 1)&=y\otimes t+z\otimes (t-1),\\
\tau_2(y\otimes 1)&=-y\otimes 1,\\
\tau_2(z\otimes 1)&=-\bigl(x\otimes t''+y\otimes (t''-1)\bigr).
\end{split}
\end{equation}
\end{romanenumerate}

With this action of $S_4$, the isomorphism $\Psi$ in \eqref{eq:Psi}
becomes an isomorphism of Lie algebras with $S_4$-action.

A particular basis $\{u_0,u_1,u_2\}$ of $\frg=\frsl_2\otimes \calA$,
as a module for $\calA$, plays a key role in \cite{Edinburgh}. It is
defined by:
\begin{equation}\label{eq:Abasis}
\begin{split}
u_0&=\frac{1}{4}\Psi(X_{02}+X_{31}),\\
u_1&=\frac{1}{4}\Psi(X_{03}+X_{12}),\\
u_2&=\frac{1}{4}\Psi(X_{01}+X_{23}),
\end{split}
\end{equation}
and satisfies \cite[Theorem 1.9]{Edinburgh}:
\begin{equation}\label{eq:uiuj}
[u_0,u_1]=-u_2t,\quad [u_1,u_2]=-u_0t',\quad [u_2,u_0]=-u_1t''.
\end{equation}
Moreover, these elements also generate $\frg$ as a Lie algebra over
$k$. The action of $S_4$ on this $\calA$-basis is given by:
\[
\begin{split}
&\tau_1(u_0)=u_0=-\tau_2(u_0),\\
&\tau_1(u_1)=-u_1=-\tau_2(u_1),\\
&\tau_1(u_2)=-u_2=\tau_2(u_2),\quad\text{(see \cite[Theorem 2.2]{Edinburgh})}\\
&\varphi(u_i)=u_{i+1},\ \text{(indices modulo $3$, see
\cite[(1.8)]{Edinburgh})}\\
&\tau(u_1)=\frac{1}{4}\Psi(X_{03}-X_{12})=-[u_0,u_1]=u_2t,\
\text{(see \cite[Theorem 1.9]{Edinburgh})}\\
&\tau(u_2)=\tau\varphi(u_1)=\varphi^2\tau(u_1)=u_1t'',\\
&\tau(u_0)=\tau\varphi^2(u_1)=\varphi\tau(u_1)=u_0t'.
\end{split}
\]

Besides, the subspaces $\frt$, $\frg_0$, $\frg_1$ and $\frg_2$ in
\eqref{eq:tg0g1g2} become:
\[
\frt=0,\quad \frg_0=u_0\calA,\quad \frg_1=u_1\calA,\quad
\frg_2=u_2\calA,
\]
and the associated normal LRTA can be identified to
$(\calA,\cdot,-,\delta)$ with (see \cite[Proposition
3.1]{Edinburgh}):
\[
\begin{split}
a\cdot
b&=\bigl(\tau_\calA\varphi_\calA(a)\bigr)(\tau_\calA\varphi_\calA^2(b)\bigr),\\
\bar a&=-t'\tau_\calA(a).
\end{split}
\]
(Here, the usual multiplication of $\calA=k[t,t^{-1},(1-t)^{-1}]$ is
denoted by juxtaposition.)

\smallskip

Then Corollary \ref{co:g0g1g2}, together with $\frt=0$, gives the answer to \cite[Problem 2.10]{ItoPaul}:

\begin{proposition}
The Tetrahedron algebra is a direct sum of copies of the two
irreducible three-dimensional modules for $S_4$.
\end{proposition}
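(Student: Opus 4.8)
The plan is to reduce the statement entirely to Corollary \ref{co:g0g1g2}, the one remaining ingredient being the vanishing of the Klein-fixed component $\frt$ of the Tetrahedron algebra. First I would invoke the identification $\frgtetra\cong\frsl_2\otimes\calA$ as Lie algebras with $S_4$-action furnished by $\Psi$, together with the explicit action of $S_4$ on the $\calA$-basis $\{u_0,u_1,u_2\}$ recorded above. From the relations $\tau_1(u_0)=u_0=-\tau_2(u_0)$, $\tau_1(u_1)=-u_1=-\tau_2(u_1)$ and $\tau_1(u_2)=-u_2=\tau_2(u_2)$ one reads off that each $u_i$ lies in a single, and in each case nontrivial, Klein eigenspace, giving $\frg_0=u_0\calA$, $\frg_1=u_1\calA$, $\frg_2=u_2\calA$ as in \eqref{eq:tg0g1g2}.

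The key point is then that $\{u_0,u_1,u_2\}$ is an $\calA$-basis of $\frsl_2\otimes\calA$, so these three subspaces already exhaust the whole Lie algebra, $\frg_0\oplus\frg_1\oplus\frg_2=\frsl_2\otimes\calA$. Since none of the $u_i$ is invariant under all of Klein's $4$-group, nothing is left in the $(\bar 0,\bar 0)$-component, and the grading \eqref{eq:z22grading} degenerates to $\frt=0$ and $\frgtetra=\frg_0\oplus\frg_1\oplus\frg_2=\oplus_{i=0}^2\iota_i(A)$ with $A=\calA$. Now Corollary \ref{co:g0g1g2} applies verbatim to $\oplus_{i=0}^2\iota_i(A)$, and because there is no $\frt$-summand to account for separately, this yields the decomposition of the \emph{entire} Tetrahedron algebra into copies of the two three-dimensional irreducibles $V$ and $V'=U'\otimes V$.

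I expect no serious obstacle here: essentially all of the substantive work — the isomorphism $\Psi$, the $S_4$-action on $\{u_0,u_1,u_2\}$, and in particular the fact that these three elements form an $\calA$-basis and therefore force $\frt=0$ — has already been carried out in the discussion preceding the statement. The only thing worth confirming explicitly is that the span of the $u_i\calA$ leaves no Klein-invariant remainder; once $\frt=0$ is in hand, the result is an immediate consequence of Corollary \ref{co:g0g1g2}, requiring no further case analysis or representation-theoretic input.
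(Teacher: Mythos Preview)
Your proposal is correct and matches the paper's own argument exactly: the paper derives $\frt=0$ from the fact that $\{u_0,u_1,u_2\}$ is an $\calA$-basis lying in the nontrivial Klein eigenspaces (using that $\tau_1,\tau_2$ are $\calA$-linear), and then invokes Corollary~\ref{co:g0g1g2}. There is nothing to add.
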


\smallskip

But a more precise statement can be given, by providing a concrete
decomposition into a direct sum of irreducible $S_4$-modules. To do
so, and because of Theorem \ref{th:g0g1g2}, let us first compute the
elements $a\in\calA$ with $\bar a=\pm a$. Note that for $a\in\calA$,
\[
\begin{split}
\bar a=a\  &\Leftrightarrow\ a=-t'\tau_\calA(a)\\
 &\Leftrightarrow\ ta=\tau_\calA(ta),\ \text{as $tt't''=-1$ and
 $\tau_\calA(t)=(t'')^{-1}$.}
\end{split}
\]
But, as $\calA=k[t,t^{-1},(1-t)^{-1}]\subseteq k(t)$, any $b\in
\calA$ is written uniquely as $b=\frac{p(t)}{(t(1-t))^s}$, with
$p(t)$ a polynomial in $k[t]$ with either $p(0)\ne 0$ or $p(1)\ne
0$. Since $\tau_\calA(t)=1-t$, $\tau_\calA(b)=b$ if and only if
$\tau_\calA\bigl(p(t)\bigr)=p(t)$ or, equivalently, $p(t)=p(1-t)$.
This is equivalent to the condition $p(t)\in
k[t(1-t)]=k[\bigl(t-\frac{1}{2}\bigr)^2]$. Note that the set
$\{(t(1-t))^r: r\in \bN\cup\{0\}\}$ is a $k$-basis of the polynomial
ring $k[t(1-t)]$.

In a similar vein, $\tau_\calA(b)=-b$ if and only if $p(t)\in
(2t-1)k[t(1-t)]$. Hence, for any $a\in\calA$:
\[
\begin{split}
\bar a=a\ &\Leftrightarrow\ a\in t^{-1}k[t(1-t),(t(1-t))^{-1}],\\
\bar a=-a\ &\Leftrightarrow\ a\in
t^{-1}(2t-1)k[t(1-t),(t(1-t))^{-1}].
\end{split}
\]

Therefore, Theorem \ref{th:g0g1g2} gives:

\begin{theorem}\label{th:tet33bar}
For any integer $s$,
\begin{romanenumerate}
\item
$V_s'=\espan{u_0t^{-1}(t(1-t))^s,u_1(t')^{-1}(t'(1-t'))^s,u_2(t'')^{-1}(t''(1-t''))^s}$
is isomorphic, as a module for $S_4$, to $V'=U'\otimes V$.
\item $V_s=\text{span}\left\{u_0t^{-1}(2t-1)(t(1-t))^s,u_1(t')^{-1}(2t'-1)(t'(1-t'))^s
,\right.$
\newline $\left.u_2(t'')^{-1}(2t''-1)(t''(1-t''))^s\right\}$ is
isomorphic, as a module for $S_4$, to the standard module $V$.
\item The decomposition
\[
\frg=
\Bigl(\oplus_{s\in\bZ}V_s'\Bigr)\oplus\Bigl(\oplus_{s\in\bZ}V_s\Bigr)
\]
is a decomposition of $\frg=\frsl_2\otimes \calA$ into a direct sum
of irreducible $S_4$-modules.
\end{romanenumerate}
\end{theorem}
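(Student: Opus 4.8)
The plan is to derive all three parts directly from Theorem \ref{th:g0g1g2}, using the identification of $(\calA,\cdot,-,\delta)$ as the normal LRTA attached to $\frg=\frsl_2\otimes\calA$ recorded just above: here $\frt=0$, $\frg_i=u_i\calA$, and $\iota_0(a)=u_0a$, $\iota_1(a)=u_1\varphi_\calA(a)$, $\iota_2(a)=u_2\varphi_\calA^2(a)$ for $a\in\calA$. First I would note that $\varphi_\calA$ cyclically permutes $t\mapsto t'\mapsto t''\mapsto t$, so that $\varphi_\calA\bigl(t^{-1}(t(1-t))^s\bigr)=(t')^{-1}(t'(1-t'))^s$ and $\varphi_\calA^2$ carries it to $(t'')^{-1}(t''(1-t''))^s$, and likewise for the elements bearing the extra factor $2t-1$. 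Writing $x_s'=t^{-1}(t(1-t))^s$ and $x_s=t^{-1}(2t-1)(t(1-t))^s$, this shows that the spanning sets in (i) and (ii) are exactly $\sum_{i=0}^2k\iota_i(x_s')$ and $\sum_{i=0}^2k\iota_i(x_s)$.

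Next I would invoke the eigenspace computation preceding the theorem, which gives $\bar a=a$ iff $a\in t^{-1}k[t(1-t),(t(1-t))^{-1}]$ and $\bar a=-a$ iff $a\in t^{-1}(2t-1)k[t(1-t),(t(1-t))^{-1}]$. Hence $\bar x_s'=x_s'$ and $\bar x_s=-x_s$, and Theorem \ref{th:g0g1g2} yields parts (i) and (ii) at once: $V_s'\cong V'$ because $\bar x_s'=x_s'$, and $V_s\cong V$ because $\bar x_s=-x_s$.

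For part (iii), since $\frt=0$ we have $\frg=\frg_0\oplus\frg_1\oplus\frg_2=u_0\calA\oplus u_1\calA\oplus u_2\calA$, and each $V_s'$, $V_s$ is graded for this decomposition, its three displayed generators lying one in each $\frg_i$; thus the whole family respects the grading. It therefore suffices to check, for each fixed $i$, that the $\frg_i$-components of all the $V_s'$ and $V_s$ together form a $k$-basis of $\frg_i=u_i\calA$. Since the involution $a\mapsto\bar a$ is a linear involution and $\charac k\ne 2$, the two eigenspace descriptions give $\calA=\calA^+\oplus\calA^-$ with $\calA^+=t^{-1}k[t(1-t),(t(1-t))^{-1}]$ and $\calA^-=t^{-1}(2t-1)k[t(1-t),(t(1-t))^{-1}]$. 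As $\{(t(1-t))^s:s\in\bZ\}$ is a $k$-basis of the Laurent ring $k[t(1-t),(t(1-t))^{-1}]$, the set $\{x_s':s\in\bZ\}\cup\{x_s:s\in\bZ\}$ is a $k$-basis of $\calA$. The $\frg_0$-components of the modules are precisely $u_0$ times this basis, hence a basis of $\frg_0$; applying the automorphisms $\varphi_\calA$ and $\varphi_\calA^2$, which send a basis of $\calA$ to a basis of $\calA$, shows the $\frg_1$- and $\frg_2$-components are bases of $\frg_1$ and $\frg_2$. Combined with $\frg=\frg_0\oplus\frg_1\oplus\frg_2$, this proves that the generators of all the $V_s'$ and $V_s$ form a $k$-basis of $\frg$, giving the asserted direct sum decomposition.

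The genuinely new work is confined to the bookkeeping of the twist by $\varphi_\calA$ in matching the generators, and to the linear-algebra argument for (iii); the substantive step — identifying the $\pm1$-eigenspaces of the involution with explicit Laurent-type subalgebras — has already been carried out before the theorem, so the only thing to be careful about is that the explicit spanning sets are linearly independent and that the grading forces the total sum to be direct.
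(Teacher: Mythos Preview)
Your proposal is correct and follows essentially the same route as the paper: parts (i) and (ii) are reduced to Theorem~\ref{th:g0g1g2} via the pre-computed $\pm1$-eigenspaces of the involution, and part (iii) is the linear-algebra check that the listed elements form a $k$-basis in each graded piece. The only cosmetic difference is that the paper phrases the last step by noting $\calA=k[t',(t')^{-1},(1-t')^{-1}]=k[t'',(t'')^{-1},(1-t'')^{-1}]$ rather than invoking the automorphism $\varphi_\calA$, but these are the same observation.
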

\begin{proof}
(i) and (ii) are direct consequences of Theorem \ref{th:g0g1g2}.
Also, as $u_0$, $u_1$ and $u_2$ form a basis of $\frg$ over $\calA$,
and
\[
\calA=k[t,t^{-1},(1-t)^{-1}]=k[t',(t')^{-1},(1-t')^{-1}]
=k[t'',(t'')^{-1},(1-t'')^{-1}],
\]
the result in (iii) follows.
\end{proof}

\smallskip

\begin{remark}
The symmetric group $S_3$ acts on $\calA$ with the action of
$\varphi$ and $\tau$ given by $\varphi_\calA$ and $\tau_\calA$
above. As noticed in \cite[Lemma 6.3]{HT05},
$\{1\}\cup\{t^n,(t')^n,(t'')^n: n\in\bN\}$ is a $k$-basis of
$\calA$. Now, recall that $\varphi_\calA$ permutes cyclically $t$,
$t'$ and $t''$, while $\tau_\calA(t)=1-t$, so
$\tau_\calA(2t-1)=-(2t-1)$ and
$\tau_\calA(2t'-1)=\tau_\calA\varphi_\calA(2t-1)=
\varphi_\calA^2\tau_\calA(2t-1)=-(2t''-1)$. Thus, for any $n\in\bN$:
\begin{itemize}
\item $W_n=\{ \alpha(2t-1)^n+\alpha'(2t'-1)^n+\alpha''(2t''-1)^n:
\alpha,\alpha',\alpha''\in k,\, \alpha+\alpha'+\alpha''=0\}$ is a
two dimensional irreducible module for $S_3$,
\item
$U_n=k\Bigl((2t-1)^{2(n-1)}+(2t'-1)^{2(n-1)}+(2t''-1)^{2(n-1)}\Bigr)$
is a trivial module for $S_3$,
\item
$U_n'=k\Bigl((2t-1)^{2n-1}+(2t'-1)^{2n-1}+(2t''-1)^{2n-1}\Bigr)$ is
an irreducible module for $S_3$ isomorphic to the alternating
module,
\end{itemize}
and there appears the following decomposition into a direct sum of
irreducible modules:
\[
\null\qquad\qquad \calA=\oplus_{n\in\bN}(U_n\oplus U_n'\oplus W_n).\qquad\qquad\qed
\]
\end{remark}

\bigskip

The symmetric group $S_4$ acts on the set of generators
$\bigl\{ X_{ij}: i,j\in\{0,1,2,3\},\, i\ne j\bigr\}$ of the
Tetrahedron algebra. The subspace spanned by these generators is
easily shown to split into the direct sum of two nonisomorphic
irreducible $S_4$-modules (recall that the index $0$ is identified
to $4$):
\[
\begin{split}
&\espan{X_{\sigma(1)\sigma(2)}+X_{\sigma(2)\sigma(3)}+X_{\sigma(3)\sigma(4)}+
X_{\sigma(4)\sigma(1)}: \sigma\in S_4}\\
&\qquad\qquad =k(X_{01}+X_{12}+X_{23}+X_{30})+k(X_{02}+X_{23}+X_{31}+X_{10})\\
&\qquad\qquad\qquad\qquad +k(X_{03}+X_{31}+X_{12}+X_{20}),
\end{split}
\]
and
\[
\begin{split}
&\espan{X_{\sigma(1)\sigma(2)}-X_{\sigma(2)\sigma(3)}+X_{\sigma(3)\sigma(4)}-
X_{\sigma(4)\sigma(1)}: \sigma\in S_4}\\
&\qquad\qquad =k(X_{01}-X_{12}+X_{23}-X_{30})+k(X_{02}-X_{23}+X_{31}-X_{10})\\
&\qquad\qquad\qquad\qquad +k(X_{03}-X_{31}+X_{12}-X_{20}).
\end{split}
\]

Note that
\[
\begin{split}
\frac{1}{4}\Psi\bigl(X_{01}+X_{12}+X_{23}+X_{30}\bigr)&=
\frac{1}{4}\Psi\bigl(X_{01}+X_{23}\bigr)
+\frac{1}{4}\Psi\bigl(X_{12}-X_{03}\bigr)\\
&=u_2-u_2t=u_2(1-t)=u_2(t'')^{-1}.
\end{split}
\]
(See the proof of \cite[Theorem 1.9]{Edinburgh}.) By applying
$\varphi$ and the fact that $\Psi$ is an isomorphism that commutes
with the actions of $S_4$, we obtain:
\[
\begin{split}
\frac{1}{4}\Psi\bigl(X_{01}+X_{12}+X_{23}+X_{30}\bigr)&=u_2(t'')^{-1}=:v_2,\\
\frac{1}{4}\Psi\bigl(X_{02}+X_{23}+X_{31}+X_{10}\bigr)&=u_0t^{-1}=:v_0,\\
\frac{1}{4}\Psi\bigl(X_{03}+X_{31}+X_{12}+X_{20}\bigr)&=u_1(t')^{-1}=:v_1,
\end{split}
\]
and, similarly,
\[
\begin{split}
\frac{1}{4}\Psi\bigl(X_{01}-X_{12}+X_{23}-X_{30}\bigr)&=u_2(1+t)=:w_2,\\
\frac{1}{4}\Psi\bigl(X_{02}-X_{23}+X_{31}-X_{10}\bigr)&=u_0(1+t')=:w_0,\\
\frac{1}{4}\Psi\bigl(X_{03}-X_{31}+X_{12}-X_{20}\bigr)&=u_1(1+t'')=:w_1.
\end{split}
\]

Either by looking at Theorem \ref{th:tet33bar} or by direct
computation, it follows that $kv_0+kv_1+kv_2$ is isomorphic to the
$S_4$-module $V'$, and that $kw_0+kw_1+kw_2$ is isomorphic to the
standard module $V$. Since $1+t$, $1+t'$ and $1+t''$ are not
invertible in $\calA$, $\{w_0,w_1,w_2\}$ is not a basis of $\frg$
over $\calA$. On the other hand, $\{v_0,v_1,v_2\}$ is indeed a basis
of $\frg=\frsl_2\otimes\calA$ over $\calA$. However, $v_0$, $v_1$
and $v_2$ fail to generate the whole Lie algebra $\frg$, as the
following two results show.

\begin{proposition}
The $k$-subalgebra generated by $v_0$, $v_1$ and $v_2$ is
$v_0\calS\oplus v_1\calS\oplus v_2\calS$, where
$\calS=k[t(1-t),t'(1-t'),t''(1-t'')]$.
\end{proposition}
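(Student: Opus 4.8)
The plan is to first compute the brackets of the three generators, then prove the two inclusions separately. Since $\frg=\frsl_2\otimes\calA$ is a Lie algebra over $\calA$, its bracket is $\calA$-bilinear, so $[v_ia,v_jb]=ab\,[v_i,v_j]$. Using \eqref{eq:uiuj} and $u_i=v_i\cdot(\ldots)$ to rewrite the answer in terms of the $v_i$, I would compute
\[
[v_0,v_1]=t^{-1}(t')^{-1}[u_0,u_1]=-u_2(t')^{-1}=-v_2\,t''(t')^{-1}.
\]
Because $(t')^{-1}=1-t''$ (both equal $-t/(1-t)$), one gets $t''(t')^{-1}=t''(1-t'')$, so writing $p=t(1-t)$, $p'=t'(1-t')$, $p''=t''(1-t'')$ (all lying in $\calS$ and satisfying $pp'p''=1$),
\[
[v_0,v_1]=-v_2p'',\qquad [v_1,v_2]=-v_0p,\qquad [v_2,v_0]=-v_1p',
\]
the last two following from the first by applying $\varphi$, which permutes $v_0,v_1,v_2$ cyclically and, via $\varphi_\calA$, permutes $p,p',p''$ cyclically. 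As $\calS$ is a subalgebra of $\calA$ containing $p,p',p''$, the relations $[v_ia,v_jb]=ab\,[v_i,v_j]$ show that $M:=v_0\calS\oplus v_1\calS\oplus v_2\calS$ is closed under brackets; since $v_0,v_1,v_2\in M$, the subalgebra $\frL$ they generate satisfies $\frL\subseteq M$.

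For the reverse inclusion I would put $\calS_i=\{s\in\calA:v_is\in\frL\}$. As $\{u_0,u_1,u_2\}$ is an $\calA$-basis, the sums $v_i\calA=u_i\calA$ are independent, so $\frL\subseteq M$ forces $\calS_i\subseteq\calS$, and the goal becomes $\calS_i=\calS$. Evaluating the bracket relations with one argument equal to $1$ yields the six basic inclusions
\[
\calS_0p'\subseteq\calS_1,\ \calS_2p'\subseteq\calS_1,\quad
\calS_1p''\subseteq\calS_2,\ \calS_0p''\subseteq\calS_2,\quad
\calS_1p\subseteq\calS_0,\ \calS_2p\subseteq\calS_0 .
\]

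The heart of the argument is to show that $\calS_0$ is stable under multiplication by each of $p,p',p''$, exploiting $pp'p''=1$ so that the accumulated multipliers telescope. For instance the chain
\[
\calS_0\xrightarrow{\,\cdot p'\,}\calS_1\xrightarrow{\,\cdot p''\,}\calS_2
\xrightarrow{\,\cdot p'\,}\calS_1\xrightarrow{\,\cdot p\,}\calS_0
\]
multiplies an element of $\calS_0$ by $p'p''p'p=p'(pp'p'')=p'$, giving $p'\calS_0\subseteq\calS_0$; the symmetric chain $\calS_0\to\calS_2\to\calS_1\to\calS_2\to\calS_0$ gives $p''\calS_0\subseteq\calS_0$; and the two–step chains $\calS_0\to\calS_1\to\calS_0$ and $\calS_0\to\calS_2\to\calS_0$ give stability under $pp'=(p'')^{-1}$ and under $pp''=(p')^{-1}$, whose composite is multiplication by $(p'')^{-1}(p')^{-1}=p$. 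Thus $\calS_0$ is a $k$-subspace containing $1$ and stable under the algebra generators $p,p',p''$ of $\calS$, whence $\calS_0=\calS$. Finally $\varphi$ fixes $\calS$ (it permutes $p,p',p''$) and preserves $\frL$ (it permutes the generators), so $v_1\calS=\varphi(v_0\calS)\subseteq\frL$ and likewise $v_2\calS\subseteq\frL$; therefore $M\subseteq\frL$ and equality holds.

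The main obstacle is exactly this reverse inclusion. Closure of $M$ under brackets is routine, but to generate all of each $v_i\calS$ one must produce multiplication by the \emph{individual} factors $p,p',p''$, not merely by the invariant combination $pp'p''=1$ that the commutators directly supply; the telescoping chains above are designed precisely to manufacture these single-factor multipliers from the six basic inclusions.
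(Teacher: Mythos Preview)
Your proof is correct, and the route for the reverse inclusion is genuinely different from the paper's. Both proofs begin with the same computation $[v_0,v_1]=-v_2p''$, $[v_1,v_2]=-v_0p$, $[v_2,v_0]=-v_1p'$ (with $p=t(1-t)$, etc.), and deduce the easy inclusion $\frL\subseteq M$ the same way. For the reverse direction the paper computes the \emph{triple} bracket $[v_1,[v_1,v_2]]=-v_2\,(p')^{-1}$, which directly produces an inverse factor; combining this with the single-bracket inclusions it shows $\calS_1=t'(1-t')\calS_2$, hence $\calS_0=\calS_1=\calS_2$, and then that this common $\calS_i$ is a subalgebra of $\calA$ containing $p,p',p''$. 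Your argument, by contrast, never leaves the six first-order inclusions $\calS_i\cdot(\text{suitable }p,p',p'')\subseteq\calS_j$: you chain them so that the identity $pp'p''=1$ telescopes the accumulated multipliers to a single $p$, $p'$, or $p''$, proving $\calS_0$ is a $k$-subspace containing $1$ and stable under multiplication by each generator of $\calS$, whence $\calS_0=\calS$; $\varphi$-invariance then transports this to $\calS_1$ and $\calS_2$. The paper's triple-bracket step is quicker and conceptually shows where the inverses come from, while your telescoping argument is more elementary (only brackets of generators with elements already known to lie in $\frL$) and makes explicit that the relation $pp'p''=1$ is exactly what is needed.
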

\begin{proof}
Since equation \eqref{eq:uiuj} shows that:
\begin{equation}\label{eq:v1v2}
\begin{split}
[v_1,v_2]&=[u_1(t')^{-1},u_2(t'')^{-1}]=-u_0t'(t')^{-1}(t'')^{-1}\\
 &=-u_0t^{-1}t(t'')^{-1}=-v_0t(1-t)\in v_0\calS,
\end{split}
\end{equation}
and by applying $\varphi$ one obtains too:
\begin{equation}\label{eq:v2v0}
[v_2,v_0]=-v_1t'(1-t')\in v_1\calS,
\end{equation}
\begin{equation}\label{eq:v0v1}
[v_0,v_1]=-v_2t''(1-t'')\in v_2\calS,
\end{equation}
it follows that $v_0\calS\oplus v_1\calS\oplus v_2\calS$ is a
subalgebra of $\frg$, and therefore, the subalgebra generated by
$v_0$, $v_1$ and $v_2$ is contained in $v_0\calS\oplus
v_1\calS\oplus v_2\calS$.

Conversely, as
\[
\begin{split}
t(1-t)&\xrightarrow{\varphi_\calA}
t'(1-t')\xrightarrow{\varphi_\calA} t''(1-t''),\\
t(1-t)&\xrightarrow{\tau_\calA} t(1-t),
\end{split}
\]
it follows that $\calS$ is invariant under the action of $S_3$.
Also, $kv_0+kv_1+kv_2$ is invariant under the action of $S_4$, so
the algebra generated by the $v_i$'s: $\alg\langle
v_0,v_1,v_2\rangle$, is invariant too under the action of $S_4$. In
particular, the action of Klein's $4$-group shows that $\alg\langle
v_0,v_1,v_2\rangle =v_0\calS_0\oplus v_1\calS_1\oplus v_2\calS_2$,
where $\calS_0$, $\calS_1$ and $\calS_2$ are subspaces of $\calA$
containing the unity $1$. Also, the invariance under the action of
$\varphi$ shows that (since $\varphi(v_i)=v_{i+1}$)
$\varphi_\calA(\calS_i)\subseteq \calS_{i+1}$ for $i=0,1,2$.

Equations \eqref{eq:v1v2}, \eqref{eq:v2v0} and \eqref{eq:v0v1} give
\begin{equation}\label{eq:SiSj}
t(1-t)\calS_1\calS_2\subseteq \calS_0,\quad
t'(1-t')\calS_2\calS_0\subseteq \calS_1,\quad
t''(1-t'')\calS_0\calS_1\subseteq \calS_2.
\end{equation}
But
\[
\begin{split}
[v_1,[v_1,v_2]]&=[v_0,v_1]t(1-t)=-v_2t(1-t)t''(1-t'')=-v_2t(1-t'')\\
 &=-v_2t(t')^{-1}=-v_2\frac{1}{t'(1-t')},
\end{split}
\]
as $t''=(1-t)^{-1}$ and cyclically. Therefore
$\frac{1}{t'(1-t')}\calS_1^2\calS_2$ is contained in $\calS_2$. In
particular, since $1$ is in contained in the $\calS_i$'s, one has:
\begin{equation}\label{eq:SicSj}
\frac{1}{t'(1-t')}\calS_1\subseteq \calS_2,\quad
\frac{1}{t'(1-t')}\calS_2\subseteq \calS_2.
\end{equation}

Equations \eqref{eq:SiSj} and \eqref{eq:SicSj} give
$t'(1-t')\calS_2\subseteq \calS_1\subseteq t'(1-t')\calS_2$, so
$\calS_1=t'(1-t')\calS_2$ and
\[
\calS_2=t'(1-t')\frac{1}{t'(1-t')}\calS_2\subseteq
t'(1-t')\calS_2=\calS_1.
\]
Thus, $\calS_2\subseteq \calS_1$ and, by applying $\varphi_\calA$
one obtains $\calS_2\subseteq \calS_1\subseteq \calS_0\subseteq
\calS_2$. Therefore, $\calS_0=\calS_1=\calS_2$ and also
$t'(1-t')\calS_2= \calS_2$. Now \eqref{eq:SiSj} shows that
$\calS_2$ is a subalgebra of $\calA$, and $\calS_2$ contains the
element $t'(1-t')$, so it contains too, by invariance under the
action of $S_3$, the elements $t(1-t)$ and $t''(1-t'')$, which shows
that $\calS_0=\calS_1=\calS_2\supseteq \calS$, as required.
\end{proof}

\begin{proposition}
$\calS$ is a maximal subalgebra of $\calA$.
\end{proposition}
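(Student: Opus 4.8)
The plan is to prove that $\calS$ has codimension exactly one as a $k$-subspace of $\calA$. Since a proper subalgebra of codimension one can be contained in no larger proper subalgebra, this yields maximality at once, and everything reduces to showing $\dim_k(\calA/\calS)=1$. Write $s=t(1-t)$, $s'=t'(1-t')$, $s''=t''(1-t'')$, so $\calS=k[s,s',s'']$. I would compute the dimension using the order two automorphism $\tau_\calA$ of $\calA$ (with $\tau_\calA(t)=1-t$), which stabilizes $\calS$, fixing $s$ and interchanging $s'$ and $s''$ (already recorded in the proof of the preceding proposition). As $\charac k\ne2$, both $\calA$ and $\calS$ split into $(\pm1)$-eigenspaces $\calA=\calA^+\oplus\calA^-$ and $\calS=\calS^+\oplus\calS^-$ under $\tau_\calA$, with $\calS^{\pm}\subseteq\calA^{\pm}$; hence
\[
\dim_k(\calA/\calS)=\dim_k(\calA^+/\calS^+)+\dim_k(\calA^-/\calS^-),
\]
and I treat the two summands separately.

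For the invariant parts, set $w=2t-1$, so $\tau_\calA(w)=-w$ and $\calA=k[w,(1+w)^{-1},(1-w)^{-1}]$. Expressing these generators shows $\calA=\calA^+\oplus w\calA^+$ with $\calA^+=k[w^2,(1-w^2)^{-1}]$; and since $w^2=1-4s$ this equals $k[s,s^{-1}]$. Now $s^{-1}=s's''$ (because $s s' s''=1$, which follows from $tt't''=-1=(1-t)(1-t')(1-t'')$), so $\calA^+\subseteq\calS$ and therefore $\calS^+=\calA^+$. Thus the first summand vanishes.

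For the anti-invariant parts, $\calA^-=w\calA^+$ is free of rank one over $\calA^+$. On the other hand $s'+s''$ and $s's''=s^{-1}$ lie in $\calS^+$, so $\calS=\calS^++\calS^+s'$, whence $\calS^-=\calS^+(s'-s'')$. The crux is the explicit identity
\[
s'-s''=\frac{w\,(1-s)}{s^{2}},
\]
which, as $s^{-2}$ is a unit of $\calA^+$, gives $\calS^-=w\,(1-s)\,\calA^+$. Therefore $\calA^-/\calS^-\cong\calA^+/(1-s)\calA^+\cong k[s,s^{-1}]/(s-1)\cong k$, so $\dim_k(\calA^-/\calS^-)=1$. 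Combining the two computations gives $\dim_k(\calA/\calS)=1$, and maximality follows.

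The anti-invariant step is where the real work lies, and I expect it to be the main obstacle. Geometrically, $\mathrm{Spec}\,\calS$ is obtained from $\mathrm{Spec}\,\calA$ (the normalization, since $t$, $t^{-1}$, $(1-t)^{-1}$ each satisfy a monic quadratic $X^2-X+s=0$, $X^2-X+s'=0$, $X^2-X+s''=0$ over $\calS$) by gluing the two points $t=-\omega,-\omega^2$ at which $s,s',s''$ all take the common value $1$, where $\omega$ is a primitive cube root of unity; this is an ordinary node. The displayed identity encodes exactly that this single node produces a defect of precisely one dimension and that no other coincidence occurs. The eigenspace splitting is what reduces the matter to a one-line principal-ideal computation in $k[s,s^{-1}]$ rather than a direct search through all of $\calS$.
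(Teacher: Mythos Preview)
Your proof is correct and follows essentially the same route as the paper's own argument: both split $\calA$ and $\calS$ into $\tau_\calA$-eigenspaces, identify the fixed part $\calA^+=\calS^+=k[s,s^{-1}]$ (the paper's $\calB$), write $\calA^-=w\calA^+$, and show $\calS^-=w(1-s)\calA^+$ so that $\calS$ has codimension one. The only cosmetic difference is that the paper computes $s'$ and $s''$ separately (as explicit expressions in $t$ and $s$) to verify the inclusion $\calS\subseteq\calB\oplus\calB(2t-1)(1-s)$ by hand, whereas you use the quadratic relation $(s')^2-(s'+s'')s'+s^{-1}=0$ to get $\calS=\calA^++\calA^+ s'$ directly and then invoke the single identity $s'-s''=w(1-s)/s^2$; your packaging is a bit tidier, and the geometric remark about the node at $s=s'=s''=1$ is a nice gloss the paper does not include, but the substance is identical.
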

\begin{proof}
Recall that $\calS=k[t(1-t),t'(1-t'),t''(1-t'')]$. Since
\[
t(1-t)t'(1-t')t''(1-t'')=t(t'')^{-1}t't^{-1}t''(t')^{-1}=1,
\]
it follows that $(t(1-t))^{-1}$, $(t'(1-t'))^{-1}$ and
$(t''(1-t''))^{-1}$ are all contained in $\calS$ and that the
subspace of $\calA$ fixed by $\tau_\calA$, which is
$\calB=k[t(1-t),(t(1-t))^{-1}]$ is contained in $\calS$. Hence
\[
\calS=\calB\oplus\{s\in\calS:\tau_\calA(s)=-s\}.
\]
Since $\calA$ is an integral domain, the eigenspace of eigenvalue
$-1$ for $\tau_\calA$ is the free $\calB$-module $\calB(2t-1)$, so
that $\{s\in\calS: \tau_\calA(s)=-s\}$ is a $\calB$-submodule of
$\calB(2t-1)$. Let us check that $\{s\in\calS:\tau_\calA(s)=-s\}$
coincides with $\calB(2t-1)(1-t(1-t))$. Since
$\calB(2t-1)=\calB(2t-1)(1-t(1-t))\oplus k(2t-1)$, this will show
that $\calS$ is a codimension one subalgebra of $\calA$, and hence
maximal.

Note that
\[\begin{split}
(t-1)^3&=(1-2t+t^2)(t-1)=(t-1)+(2-t)t(1-t)\\
  &=\frac{1}{2}\Bigl((2t-1)-1+(3-(2t-1))t(1-t)\Bigr)\\
  &=\frac{1}{2}\Bigl((2t-1)(1-t(1-t))-1+3t(1-t)\Bigr),\\[4pt]
t^3&=(t-1)^3+3t^2-3t+1=(t-1)^3-3t(1-t)+1\\
  &=\frac{1}{2}\Bigl((2t-1)(1-t(1-t))+1-3t(1-t)\Bigr),
\end{split}
\]
and hence
\[
\begin{split}
t'(1-t')&=t't^{-1}=(1-t^{-1})t^{-1}=\frac{t-1}{t^2}=\frac{(t-1)^3}{(t(1-t))^2}\\
&=\frac{1}{2(t(1-t))^2}\Bigl((2t-1)(1-t(1-t))-1+3t(1-t)\Bigr),\\[8pt]
t''(1-t'')&=t''(t')^{-1}=-\frac{t}{(1-t)^2}=-\frac{t^3}{(t(1-t))^2}\\
 &=\frac{-1}{2(t(1-t))^2}\Bigl((2t-1)(1-t(1-t))+1-3t(1-t)\Bigr),
\end{split}
\]
so $\calS=k[t(1-t),t'(1-t'),t''(1-t'')]\subseteq
\calB\oplus\calB(2t-1)(1-t(1-t))$ holds.

Reciprocally, $\calB$ is contained in $\calS$, and
$(2t-1)(1-t(1-t))=2t'(1-t')(t(1-t))^2+1-3t(1-t)$
belongs to $\calS$. Hence we have $\calS=\calB\oplus\calB(2t-1)(1-t(1-t))$, as required.
\end{proof}

\bigskip
\section{Lie algebras with $S_3$-action}\label{se:S3}

Given any normal LRTA, the Lie algebra
$\tilde\frg(A,\cdot,-,\delta)$ in \eqref{eq:tildefrg} is endowed
with an action of the symmetric group $S_4$ with its Lie algebra of
related triples being fixed by Klein's $4$-group. Thus
$\lrt(A,\cdot,-)$ is endowed naturally with an action of $S_3$,
considered in Proposition \ref{pr:S3lrtA}.

This section is devoted
to general Lie algebras endowed with an action of $S_3$ by
automorphisms.

First of all, consider the irreducible $S_3$-modules:
\begin{itemize}
\item $U=ku$, the trivial module: $\sigma(u)=u$ for any $\sigma\in
S_3$,
\item $U'=ku'$, the alternating module: $\sigma(u')=(-1)^\sigma u'$
for any $\sigma\in S_3$,
\item $W=\{(\alpha,\beta,\gamma)\in k^3: \alpha+\beta+\gamma=0\}$,
the two dimensional irreducible module. Here $W=kw_+ +kw_-$, with
$w_+=(-1,-1,2)$ and $w_-=(3,-3,0)$.
\end{itemize}

Note that there are the following obvious isomorphisms of
$S_3$-modules
\[U\otimes M\simeq M:\ u\otimes m\leftrightarrow m
\]
for any
$S_3$-module $M$.

Also, as $S_3$-modules, $U$ can be identified to $k(1,1,1)\subseteq
k^3$, so $U\oplus W=k^3$ (with the natural action
$\sigma\bigl((\alpha_1,\alpha_2,\alpha_3)\bigr)=(\alpha_{\sigma^{-1}(1)},
\alpha_{\sigma^{-1}(2)},\alpha_{\sigma^{-1}(3)})$ for any
$\alpha_1,\alpha_2,\alpha_3\in k$). The componentwise multiplication
in $k^3$ gives the following homomorphisms of $S_3$-modules (with
$u=(1,1,1)$):
\begin{equation}\label{eq:WWWU}
\begin{aligned}
W\otimes W&\rightarrow W\\ x\otimes y\,&\mapsto x\bullet
y=\pi_W(xy),
\end{aligned}
\qquad
\begin{aligned}
W\otimes W&\rightarrow U=ku\\
x\otimes y\,&\mapsto (x\,\vert\, y)u=\pi_U(xy),
\end{aligned}
\end{equation}
where $\pi_W$ and $\pi_U$ denote the projections of $k^3$ onto $W$
and $U$ respectively. Thus, $\bullet$ is a commutative product with
\begin{equation}\label{eq:wsbullet}
w_+\bullet w_+=w_+,\qquad w_+\bullet w_-=-w_-,\qquad w_-\bullet
w_-=-3w_+.
\end{equation}
Note that $(W,\bullet)$ is the para-Hurwitz algebra attached to the
two dimensional composition algebra $k[z]=k1+kz$ with $z^2=-3$. That
is, $x\bullet y=\overline{xy}$ for any $x,y\in k[z]$, where $\bar x$
denotes the natural involution of $k[z]$, given by $\bar z=-z$. This
will be exploited in Section \ref{se:S3examples}.

Also, $(.\,\vert\, .)$ is a nondegenerate symmetric bilinear form with:
\begin{equation}\label{eq:wsbilinear}
(w_+\,\vert\, w_+)=2,\qquad (w_-\,\vert\, w_-)=6,\qquad (w_+\,\vert\, w_-)=0.
\end{equation}

Moreover, the $S_3$-modules $\wedge^2 W$ and $U'$ are isomorphic, so
with $U'=ku'$, there is the homomorphism of $S_3$-modules
\begin{equation}\label{eq:WWUprime}
\begin{split}
W\otimes W&\rightarrow U'\\
x\otimes y\,&\mapsto \langle x\,\vert\, y\rangle u',
\end{split}
\end{equation}
with $\langle .\,\vert\, .\rangle$ the alternating bilinear form with
$\langle w_-\,\vert\, w_+\rangle =1$. Since $W\otimes W$ is isomorphic,
as a $S_3$-module, to $U\oplus U'\oplus W$, the homomorphisms in
\eqref{eq:WWWU} and \eqref{eq:WWUprime} are, up to scalars, the only
possible such homomorphisms. Besides, there are unique, up to
scalars, Clebsch-Gordan homomorphisms of $S_3$-modules $\diamond:U'\otimes
U'\rightarrow U$ and $\diamond:U'\otimes W\rightarrow W$ given by:
\begin{equation}\label{eq:UprimeUprime}
\begin{split}
u'\otimes u'&\mapsto u'\diamond u'=-12 u,\\
u'\otimes w_+&\mapsto u'\diamond w_+=2w_-,\\
u'\otimes w_-&\mapsto u'\diamond w_-=-6w_+.
\end{split}
\end{equation}

\medskip

Let $\frg$ be a Lie algebra endowed with an action of $S_3$ by
automorphisms. Then by complete reducibility, $\frg$ is a direct sum
of copies of $U$, $U'$ and $W$. By joining together isomorphic
summands, we may write:
\begin{equation}\label{eq:gd+d-WM}
\frg=(U\otimes \frd^+)\oplus(U'\otimes \frd^-)\oplus (W\otimes M),
\end{equation}
for vector spaces $\frd^+$, $\frd^-$ and $M$.

Note that $(U\otimes\frd^+)\oplus(U'\otimes \frd^-)$ is the
subalgebra of $\frg$ consisting of the elements fixed by the action
of $\varphi=(123)$, and it is graded over $\bZ_2$ by means of the
action of $\tau=(12)$, which is the identity on $U\otimes\frd^+$ and
minus the indentity on $U'\otimes\frd^-$. The remaining brackets in
$\frg$ are given, by $S_3$-invariance, by:
\begin{equation}\label{eq:bracketS3}
\begin{split}
&[u\otimes d_1^+,u\otimes d_2^+]=u\otimes[d_1^+,d_2^+],\\
&[u\otimes d^+,u'\otimes d^-]=u'\otimes [d^+,d^-],\\
&[u'\otimes d_1^-,u'\otimes d_2^-]=(u'\diamond u')\otimes
[d_1^-,d_2^-]
   =-12u\otimes [d_1^-,d_2^-],\\
&[u\otimes d^+,w\otimes x]=w\otimes d^+(x),\\
&[u'\otimes d^-,w\otimes x]=(u'\diamond w)\otimes d^-(x),\\
&[w_1\otimes x,w_2\otimes y]=(w_1\,\vert\, w_2)u\otimes d_{x,y}^+
   +\langle w_1\,\vert\, w_2\rangle u'\otimes d_{x,y}^- +w_1\bullet
   w_2\otimes xy,
\end{split}
\end{equation}
for any $d^{\pm},d_1^{\pm},d_2^\pm\in \frd^{\pm}$, $x,y\in M$ and
$w,w_1,w_2\in W$, where
\[
\begin{split}
&\frd^\pm\times \frd^\pm\rightarrow \frd^+,\quad
(d_1^\pm,d_2^\pm)\mapsto [d_1^\pm,d_2^\pm],\\
&\frd^+\times \frd^-\rightarrow \frd^-,\quad (d^+,d^-)\mapsto
[d^+,d^-],\\
&\frd^\pm\times M\rightarrow M,\quad (d^\pm,x)\mapsto d^\pm(x),\\
&M\times M\rightarrow \frd^\pm,\quad (x,y)\mapsto d_{x,y}^\pm,\\
&M\times M\rightarrow M,\quad (x,y)\mapsto xy,
\end{split}
\]
are bilinear maps. The skew-symmetry of the bracket in $\frg$
forces $[.,.]$ to be skew-symmetric on its arguments, $d_{x,y}^+$
and $xy$ to be skew-symmetric too, and $d_{x,y}^-$ to be symmetric
on $x,y$.

In this situation consider the trilinear map:
\begin{equation}\label{eq:tripleproduct}
\begin{split}
\{.,.,.\}: M\times M\times M&\rightarrow M\\
(x,y,z)&\mapsto \{x,y,z\}=d_{x,y}^+(z)+d_{x,y}^-(z).
\end{split}
\end{equation}

The next result shows that, essentially, the Lie algebra $\frg$ is determined by the triple $\bigl(M,xy,\{x,y,z\}\bigr)$.

\begin{theorem}\label{th:S3}
Let $\frg$ be a Lie algebra endowed with an action of $S_3$ by
automorphisms, so that $\varphi$ acts nontrivially. With the
notations above, for any $a,b,x,y,z,t\in M$:
\begin{subequations}\label{eq:identitiesS3}
\begin{equation}\label{eq:idS3-4}
(xy)z=\{x,z,y\}-\{y,z,x\}.
\end{equation}
\begin{equation}\label{eq:idS3-1}
\{a,b,xy\}=-\{b,a,x\}y-x\{b,a,y\},
\end{equation}
\begin{equation}\label{eq:idS3-2}
\{a,b,\{x,y,z\}\}-\{x,y,\{a,b,z\}\}=\{\{a,b,x\},y,z\}-\{x,\{b,a,y\},z\},
\end{equation}
(that is, $(M,\{.,.,.\})$ is a generalized Jordan triple system
\cite{Kantor})
\begin{equation}\label{eq:idS3-3}
\{xy,z,t\}+\{yz,x,t\}+\{zx,y,t\}=0=\{x,yz,t\}+\{y,zx,t\}+\{z,xy,t\},
\end{equation}
\end{subequations}

Conversely, let $(M,xy,\{x,y,z\})$ be a nonzero vector space endowed
with an anticommutative bilinear product $xy$ and a trilinear
product $\{x,y,z\}$ satisfying the equations
\eqref{eq:identitiesS3}. Consider the vector space
\begin{equation}\label{eq:gM}
\frg(M)=(U\otimes \frd^+)\oplus(U'\otimes \frd^-)\oplus (W\otimes
M),
\end{equation}
with
\[
\frd^+=\espan{\{x,y,.\}-\{y,x,.\}:x,y\in M}
\]
and
\[
\frd^-=\espan{\{x,y,.\}+\{y,x,.\}:x,y\in M}
\]
(thus, $\frd^+$ and $\frd^-$ are subspaces of the general Lie algebra $\frgl(M)$). Then
the subspace $\frd^+ + \frd^-=\espan{\{x,y,.\}: x,y\in M}$ is a
Lie subalgebra of $\frgl(M)$, and the vector space $\frg(M)$, with
the bracket defined by \eqref{eq:bracketS3}, where
$d_{x,y}^+=\frac{1}{2}(\{x,y,.\}-\{y,x,.\})$ and
$d_{x,y}^-=\frac{1}{2}(\{x,y,.\}+\{y,x,.\})$, is a Lie algebra with
an action of $S_3$ by automorphisms defined by the action of $S_3$
on $U$, $U'$ and $W$.
\end{theorem}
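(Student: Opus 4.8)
The plan is to read the statement as an equivalence: for the space $\frg(M)$ of \eqref{eq:gM} equipped with the bracket \eqref{eq:bracketS3}, the Jacobi identity is to hold if and only if the four identities \eqref{eq:identitiesS3} hold. The direct part of the Theorem (that the products attached to a Lie algebra with $S_3$-action satisfy \eqref{eq:identitiesS3}) and the converse then both fall out of a single analysis of the Jacobi identity, organized according to how many of the three arguments lie in the inner part $(U\otimes\frd^+)\oplus(U'\otimes\frd^-)$ and how many in the module part $W\otimes M$. Throughout I abbreviate $L_{x,y}$ for the operator $z\mapsto\{x,y,z\}$, so that $d^+_{x,y}=\frac12(L_{x,y}-L_{y,x})$, $d^-_{x,y}=\frac12(L_{x,y}+L_{y,x})$ and $\frL=\frd^++\frd^-=\espan{L_{x,y}:x,y\in M}$.

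The first task is to make the bracket well defined, and this is where \eqref{eq:idS3-2} is used. Read as an operator identity, \eqref{eq:idS3-2} says
\[
[L_{a,b},L_{x,y}]=L_{\{a,b,x\},y}-L_{x,\{b,a,y\}},
\]
so $\frL$ is a Lie subalgebra of $\frgl(M)$. Expanding $[d^\pm_{a,b},d^\pm_{x,y}]$ bilinearly from this formula and separating symmetric and skew parts gives the closed expressions
\[
[d^+_{a,b},d^+_{x,y}]=d^+_{p,y}+d^+_{x,q},\qquad
[d^+_{a,b},d^-_{x,y}]=d^-_{p,y}+d^-_{x,q},
\]
with $p=d^+_{a,b}(x)$, $q=d^+_{a,b}(y)$, together with $[d^-_{a,b},d^-_{x,y}]=d^+_{r,y}+d^+_{s,x}$ for $r=d^-_{a,b}(x)$, $s=d^-_{a,b}(y)$. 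These three formulas show $[\frd^+,\frd^+]+[\frd^-,\frd^-]\subseteq\frd^+$ and $[\frd^+,\frd^-]\subseteq\frd^-$, i.e. $\frL$ is $\bZ_2$-graded with even part $\frd^+$ and odd part $\frd^-$, which is exactly what \eqref{eq:bracketS3} needs in order to land in the prescribed summands.

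Next I would verify the Jacobi identity on homogeneous triples, splitting each Jacobiator into its $U\otimes\frd^+$, $U'\otimes\frd^-$ and $W\otimes M$ components. With all three arguments in $\frL$ the identity reduces to that of $\frgl(M)$, once one checks that the products of Clebsch--Gordan scalars from \eqref{eq:UprimeUprime} (notably the factor $-12$) agree in the three cyclic terms; with two arguments in $\frL$ and one in $W\otimes M$ it reduces to $\frL$ acting on $M$ as a Lie algebra, after the analogous check that the $U'$-action \eqref{eq:UprimeUprime} on $W$ is consistent. The first truly informative case has one argument in $\frL$ and two in $W\otimes M$: its $W\otimes M$ component states that each $d^+\in\frd^+$ is a derivation and each $d^-\in\frd^-$ a skew-derivation of $(M,\cdot)$, and since $d^+_{a,b}-d^-_{a,b}=-L_{b,a}$ this is precisely \eqref{eq:idS3-1}; its $U\otimes\frd^+$ and $U'\otimes\frd^-$ components merely reproduce the commutator formulas above, i.e. \eqref{eq:idS3-2}.

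The heaviest case, and the main obstacle, is three arguments in $W\otimes M$. Here I would expand $[[w_1\otimes x,w_2\otimes y],w_3\otimes z]$ from \eqref{eq:bracketS3}, form the cyclic sum, and isolate the three isotypic components. The $U\otimes\frd^+$ and $U'\otimes\frd^-$ parts carry, respectively, the scalars $(w_1\bullet w_2\,\vert\,w_3)$ and $\langle w_1\bullet w_2\,\vert\,w_3\rangle$, both of which turn out totally symmetric and not identically zero, so their vanishing forces $d^+_{xy,z}+d^+_{yz,x}+d^+_{zx,y}=0$ and $d^-_{xy,z}+d^-_{yz,x}+d^-_{zx,y}=0$; the sum and difference of these two are the two halves of \eqref{eq:idS3-3}. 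The delicate part is the $W\otimes M$ component: evaluating the cyclic sum on the basis $\{w_+,w_-\}$ of $W$ by means of the tables \eqref{eq:wsbullet}, \eqref{eq:wsbilinear}, \eqref{eq:WWWU}, \eqref{eq:WWUprime} and \eqref{eq:UprimeUprime}, for the sign patterns $(+\,+\,+)$, $(+\,+\,-)$ and $(+\,-\,-)$, produces three $M$-valued relations among $(xy)z$, $(yz)x$, $(zx)y$ and the six permutations of $x,y,z$ under $\{\cdot,\cdot,\cdot\}$; forming the difference of the first two makes all twelve triple-product terms cancel and collapses to $(xy)z=\{x,z,y\}-\{y,z,x\}$, namely \eqref{eq:idS3-4}. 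Finally, since the $S_3$-module maps $\bullet$, $(\cdot\,\vert\,\cdot)$, $\langle\cdot\,\vert\,\cdot\rangle$ and $\diamond$ occurring in \eqref{eq:bracketS3} are $S_3$-equivariant, $S_3$ acts on $\frg(M)$ by automorphisms through its action on the factors $U$, $U'$, $W$ (acting trivially on $\frd^\pm$ and $M$). The entire difficulty is concentrated in the last $W\otimes M$ bookkeeping, where the sign conventions built into $\bullet$, $(\cdot\,\vert\,\cdot)$, $\langle\cdot\,\vert\,\cdot\rangle$ and $\diamond$ are what make the cancellations work.
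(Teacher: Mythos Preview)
Your approach is correct and follows the same overall strategy as the paper: analyze the Jacobi identity on $\frg(M)$ for homogeneous triples, classified by how many arguments lie in $(U\otimes\frd^+)\oplus(U'\otimes\frd^-)$ versus $W\otimes M$, and read off \eqref{eq:identitiesS3} from the isotypic components. The one noteworthy difference is purely computational: the paper first extends scalars so that a primitive cube root of unity $\omega$ lies in $k$, and then works in the eigenbasis $\{w_\omega,w_{\omega^2}\}$ of $W$ for $\varphi$. In that basis one has $w_\omega\bullet w_{\omega^2}=0$, $(w_\omega\,\vert\, w_\omega)=0=(w_{\omega^2}\,\vert\, w_{\omega^2})$, and (after rescaling $u'$) $\hat u'\diamond w_\omega=w_\omega$, $\hat u'\diamond w_{\omega^2}=-w_{\omega^2}$, so a single Jacobi triple $(w_{\omega^2}\otimes x,\,w_{\omega^2}\otimes y,\,w_\omega\otimes z)$ already yields \eqref{eq:idS3-4} directly, with no need to subtract two sign patterns. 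Your route over the real basis $\{w_+,w_-\}$ works just as well but requires more bookkeeping; one small correction is that in the difference of the $(+,+,+)$ and $(+,+,-)$ relations the twelve triple-product terms do not all cancel but rather combine in pairs (via $d^+_{\cdot,\cdot}\pm d^-_{\cdot,\cdot}$) into exactly the two terms $\{y,z,x\}$ and $-\{x,z,y\}$.
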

\begin{proof}
Without loss of generality we may extend scalars if necessary and
assume that $\omega\in k$, where $\omega^3=1\ne \omega$. Then $W$
contains the basis
$\{w_\omega=(\omega^2,\omega,1),w_{\omega^2}=(\omega,\omega^2,1)\}$,
whose elements satisfy
\[
\varphi(w_\omega)=\omega w_\omega,\quad
\varphi(w_{\omega^2})=\omega^2w_{\omega^2},\quad
\tau(w_\omega)=w_{\omega^2}.
\]
Also,
\[
w_\omega+w_{\omega^2}=w_+\quad\text{and}\quad
w_\omega-w_{\omega^2}=\frac{\omega^2-\omega}{3}w_-,
\]
so
\[
w_\omega=\frac{1}{2}\bigl(w_+ +\frac{\omega^2-\omega}{3}w_-\bigr),\quad
w_{\omega^2}=\frac{1}{2}\bigl(w_+
-\frac{\omega^2-\omega}{3}w_-\bigr),
\]
and by equations \eqref{eq:wsbullet} and \eqref{eq:wsbilinear}:
\[
\begin{split}
&w_\omega\bullet w_\omega=w_{\omega^2},\quad w_{\omega^2}\bullet
w_{\omega^2}=w_\omega,\quad w_\omega\bullet w_{\omega^2}=0,\\[2pt]
&(w_\omega\,\vert\, w_\omega)=0=(w_{\omega^2}\,\vert\, w_{\omega^2}),\quad
(w_\omega\,\vert\, w_{\omega^2})=1,
\end{split}
\]
and
\[
\langle w_\omega \,\vert\, w_{\omega^2}\rangle =\frac{1}{4}\langle
w_+ +\frac{\omega^2-\omega}{3}w_- \,\vert\, w_+
-\frac{\omega^2-\omega}{3}w_-\rangle=\frac{\omega^2-\omega}{6}.
\]
Also,
\begin{equation}\label{eq:uprimewomega}
\begin{split}
u'\diamond w_\omega&=\frac{1}{2}u'\diamond \bigl(w_+
   +\frac{\omega^2-\omega}{3}w_-\bigr)\\
  & =w_-+(\omega-\omega^2)w_+=2(\omega-\omega^2)w_\omega\\[6pt]
u'\diamond w_{\omega^2}&=\frac{1}{2}u'\diamond \bigl(w_+
   -\frac{\omega^2-\omega}{3}w_-\bigr)\\
   &=w_-+(\omega^2-\omega)w_+=-2(\omega-\omega^2)w_{\omega^2}.
\end{split}
\end{equation}
Take $\hat
u'=\frac{1}{2(\omega-\omega^2)}u'=\frac{\omega^2-\omega}{6}u'$. Then
\begin{equation}\label{eq:uhatprimewomega}
\begin{split}
&\hat u'\diamond w_\omega=w_\omega,\quad
 \hat u'\diamond w_{\omega^2}=-w_{\omega^2},\\[6pt]
 &\hat u'\diamond\hat u'=-12\frac{1}{(2(\omega-\omega^2))^2}u=u.
\end{split}
\end{equation}
Consider also the new alternating bilinear form $\langle .\,\vert\,
.\rangle\hat{}{\,}:W\times W\rightarrow k$ given by $\langle
w_\omega\,\vert\, w_{\omega^2}\rangle\hat{}{\,}=1$, that is, $\langle
.\,\vert\, .\rangle\hat{}{\,}=\frac{6}{\omega^2-\omega}\langle .\,\vert\,
.\rangle$.

Then the brackets in \eqref{eq:bracketS3} become simpler:
\begin{equation}\label{eq:bracketS3bis}
\begin{split}
&[u\otimes d_1^+,u\otimes d_2^+]=u\otimes [d_1^+,d_2^+],\\
&[u\otimes d^+,\hat u'\otimes d^-]=\hat u'\otimes [d^+,d^-],\\
&[\hat u'\otimes d_1^-,\hat u'\otimes d_2^-]=u\otimes [d_1^-,d_2^-],\\
&[u\otimes d^+,w\otimes x]=w\otimes d^+(x),\\
&[\hat u',w\otimes x]=(\hat u'\diamond w)\otimes d^-(x),\\
&[w_1\otimes x,w_2\otimes y]=
 (w_1\,\vert\, w_2)u\otimes d_{x,y}^++\langle w_1\,\vert\,
 w_2\rangle\hat{}{\,}\hat u'\otimes d_{x,y}^-+ w_1\bullet w_2\otimes
 xy,
\end{split}
\end{equation}
for any $d^\pm,d_1^\pm,d_2^\pm\in\frd^\pm$, $x,y\in M$ and
$w_1,w_2\in W$. This shows in particular that the vector space
$\frd=\frd^+\oplus\frd^-$ is a $\bZ_2$-graded Lie algebra (with even
part $\frd^+$ and odd part $\frd^-$) with the bracket given by the
brackets $[d_1^\pm,d_2^\pm]$.

So assume that $\frg$ is a Lie algebra with an action of $S_3$ by
automorphisms, decomposition as in \eqref{eq:gd+d-WM} and Lie bracket as
in \eqref{eq:bracketS3bis}. Then for any $x,y,z\in M$,
\[
\begin{split}
[[w_{\omega^2}\otimes x,w_{\omega^2}\otimes y],w_{\omega^2}\otimes
z]&=[w_\omega\otimes xy,w_{\omega^2}\otimes z]\\
&=u\otimes d_{xy,z}^++\hat u'\otimes d_{xy,z}^-.
\end{split}
\]
The Jacobi identity then shows that
\[
d_{xy,z}^\pm+d_{yz,x}^\pm+d_{zx,y}^\pm=0,
\]
and this proves \eqref{eq:idS3-3}. Now,
\[
\begin{split}
[[w_{\omega^2}\otimes x,w_{\omega^2}\otimes y],w_\omega\otimes z]&=
 [w_\omega\otimes xy,w_\omega\otimes
 z]=w_{\omega^2}\otimes(xy)z,\\[6pt]
[[w_{\omega^2}\otimes y,w_\omega\otimes z],w_{\omega^2}\otimes x]&=
 [u\otimes d_{y,z}^+-\hat u'\otimes d_{y,z}^-,w_{\omega^2}\otimes
 x]\\
 &=w_{\omega^2}\otimes d_{y,z}^+(x)-(\hat u'\diamond
 w_{\omega^2})\otimes d_{y,z}^-(x)\\
 &=w_{\omega^2}\otimes \bigl(d_{y,z}^+(x)+d_{y,z}^-(x)\bigr)\\
 &=w_{\omega^2}\otimes \{y,z,x\},\\[6pt]
[[w_\omega\otimes z,w_{\omega^2}\otimes x],w_{\omega^2}\otimes y]&=
 [u\otimes d_{z,x}^++\hat u'\otimes d_{z,x}^-,w_{\omega^2}\otimes
 y]\\
 &=w_{\omega^2}\otimes d_{z,x}^+(y)+(\hat u'\diamond
 w_{\omega^2})\otimes d_{z,x}^-(y)\\
 &=w_{\omega^2}\otimes\bigl(d_{z,x}^+(y)-d_{z,x}^-(y)\bigr)\\
 &=-w_{\omega^2}\otimes\bigl(d_{x,z}^+(y)+d_{x,z}^-(y)\bigr)\\
 &=-w_{\omega^2}\otimes\{x,z,y\},
\end{split}
\]
since $d_{x,y}^+$ is skew-symmetric on $x,y$ and $d_{x,y}^-$ is
symmetric. Hence the Jacobi identity shows that
$(xy)z+\{y,z,x\}-\{x,z,y\}=0$, which gives \eqref{eq:idS3-4}.

Now, for any $d_1^+,d_2^+\in\frd^+$ and $z\in M$:
\[
\begin{split}
[[u\otimes d_1^+,u\otimes d_2^+],w_\omega\otimes z]&=
 [u\otimes[d_1^+,d_2^+],w_\omega\otimes z]\\
 &=w_\omega\otimes [d_1^+,d_2^+](z),\\[6pt]
[u\otimes d_1^+,[u\otimes d_2^+,w_\omega\otimes z]]&=
 w_\omega\otimes d_1^+(d_2^+(z)),\\[6pt]
[u\otimes d_2^+,[u\otimes d_1^+,w_\omega\otimes z]]&=
 w_\omega\otimes d_2^+(d_1^+(z)).
\end{split}
\]
Hence, the bilinear map $\frd^+\times M\rightarrow M$,
$(d^+,x)\mapsto d^+(x)$ is a representation of the Lie algebra
$\frd^+$. In the same vein, one proves:
\[
\begin{split}
[d^+,d^-](z)&=d^+(d^-(z))-d^-(d^+(z)),\\
[d_1^-,d_2^-](z)&=d_1^-(d_2^-(z))-d_2^-(d_1^-(z)),
\end{split}
\]
for any $d^\pm,d_1^\pm,d_2^\pm\in \frd^\pm$, so the linear map:
\[
\begin{split}
\rho:\frd^+\oplus\frd^-&\rightarrow \frgl(M)\\
d^++d^-&\mapsto \rho(d^+ +d^-):z\mapsto d^+(z)+d^-(z),
\end{split}
\]
is a representation of the Lie algebra $\frd^+\oplus\frd^-$.

Finally, for any $d^+\in\frd^+$, $w_1,w_2\in W$ and $x,y\in M$:
\[
\begin{split}
&[u\otimes d^+,[w_1\otimes x,w_2\otimes y]]\\
 &\ =[u\otimes d^+,(w_1\,\vert\, w_2)u\otimes d_{x,y}^+ +\langle w_1\,\vert\,
 w_2\rangle\hat{}{\,} \hat u'\otimes d_{x,y}^-+ (w_1\bullet w_2)\otimes
 xy]\\
 &\ =(w_1\,\vert\, w_2)u\otimes [d^+,d_{x,y}^+]+\langle w_1\,\vert\,
 w_2\rangle\hat{}{\,}\hat u'\otimes [d^+,d_{x,y}^-]+(w_1\bullet
 w_2)\otimes d^+(xy),\\[6pt]
&[[u\otimes d^+,w_1\otimes x],w_2\otimes y]\\
 &\ =[w_1\otimes d^+(x),w_2\otimes y]\\
 &\ =(w_1\,\vert\, w_2)u\otimes d_{d^+(x),y}^+ +\langle w_1\,\vert\,
 w_2\rangle\hat{}{\,}\hat u'\otimes d_{d^+(x),y}^- +(w_1\bullet
 w_2)\otimes d^+(x)y\\[6pt]
&[w_1\otimes x,[u\otimes d^+,w_2\otimes y]]\\
 &\ =[w_1\otimes x,w_2\otimes d^+(y)]\\
 &\ =(w_1\,\vert\, w_2)u\otimes d_{x,d^+(y)}^++\langle w_1\,\vert\,
 w_2\rangle\hat{}{\,} \hat u'\otimes d_{x,d^+(y)}^- +(w_1\bullet
 w_2)\otimes xd^+(y).
\end{split}
\]
Therefore,
\begin{equation}\label{eq:d+}
d^+(xy)=d^+(x)y +xd^+(y)\quad\text{and}\quad
[d^+,d_{x,y}^\pm]=d_{d^+(x),y}^\pm+d_{x,d^+(y)}^\pm,
\end{equation}
for any $x,y\in M$ and $d^+\in\frd^+$. In a similar vein, for any
$d^-\in\frd^-$, $w_1,w_2\in W$ and $x,y\in M$:
\[
\begin{split}
[\hat u'&\otimes d^-,[w_1\otimes x,w_2\otimes y]]\\
 &=[\hat u'\otimes d^-,(w_1\,\vert\, w_2)u\otimes d_{x,y}^+ +\langle w_1\,\vert\,
 w_2\rangle\hat{}{\,} \hat u'\otimes d_{x,y}^-+ (w_1\bullet w_2)\otimes
 xy]\\
 &=(w_1\,\vert\, w_2)\hat u'\otimes [d^-,d_{x,y}^+]+\langle w_1\,\vert\,
 w_2\rangle\hat{}{\,} u\otimes [d^-,d_{x,y}^-]\\
 &\hspace{140pt} +(\hat u'\diamond(w_1\bullet
 w_2))\otimes d^-(xy),\\[6pt]
[[\hat u'&\otimes d^-,w_1\otimes x],w_2\otimes y]\\
 &=[(\hat u'\diamond w_1)\otimes d^-(x),w_2\otimes y]\\
 &=(\hat u'\diamond w_1\,\vert\, w_2)u\otimes d_{d^-(x),y}^+ +\langle \hat u'\diamond w_1\,\vert\,
 w_2\rangle\hat{}{\,}\hat u'\otimes d_{d^-(x),y}^- \\
 &\hspace{140pt} +((\hat u'\diamond w_1)\bullet
 w_2)\otimes d^-(x)y\\[6pt]
[w_1&\otimes x,[\hat u'\otimes d^-,w_2\otimes y]]\\
 &=[w_1\otimes x,(\hat u'\diamond w_2)\otimes d^-(y)]\\
 &=(w_1\,\vert\, \hat u'\diamond w_2)u\otimes d_{x,d^-(y)}^++\langle w_1\,\vert\,
 \hat u'\diamond w_2\rangle\hat{}{\,} \hat u'\otimes d_{x,d^-(y)}^- \\
 &\hspace{140pt} +(w_1\bullet
 (\hat u'\diamond w_2))\otimes xd^-(y).
\end{split}
\]
But notice that $\langle w_1\,\vert\, w_2\rangle\hat{}{\,}=(\hat
u'\diamond w_1\,\vert\, w_2)=-(w_1\,\vert\, \hat u'\diamond w_2)$ and
$(w_1\,\vert\, w_2)=\langle \hat u'\diamond w_1\,\vert\,
w_2\rangle\hat{}{\,}=-\langle w_1\,\vert\, \hat u'\diamond
w_2\rangle\hat{}{\,}$. Also $\hat u'\diamond(w_1\bullet w_2)=-(\hat
u'\diamond w_1)\bullet w_2=-w_1\bullet(\hat u'\diamond w_2)$, so we
conclude:
\begin{equation}\label{eq:d-}
d^-(xy)=-d^-(x)y -xd^-(y)\quad\text{and}\quad
[d^-,d_{x,y}^\pm]=d_{d^-(x),y}^\mp -d_{x,d^-(y)}^\mp,
\end{equation}
for any $x,y\in M$ and $d^-\in\frd^+$.

Since $d_{x,y}^\pm=\mp d_{y,x}^\pm$ for any $x,y\in M$, equations
\eqref{eq:d+} and \eqref{eq:d-} show that, for any $a,b,x,y\in M$:
\[
\begin{split}
\{a,b,xy\}&=d_{a,b}^+(xy)+d_{a,b}^-(xy)\\
 &=d_{a,b}^+(x)y+xd_{a,b}^+(y)-d_{a,b}^-(x)y-xd_{a,b}^-(y)\\
 &=-\bigl(d_{b,a}^+(x)+d_{b,a}^-(x)\bigr)y-x\bigl(d_{b,a}^+(y)+d_{b,a}^-(y)\bigr)\\
 &=-\{b,a,x\}y-x\{b,a,y\},
\end{split}\]
thus obtaining \eqref{eq:idS3-1}, and
\[
\begin{split}
[d_{a,b}^+ +d_{a,b}^-,d_{x,y}^++d_{x,y}^-]
 &=d_{d_{a,b}^+(x),y}^++d_{x,d_{a,b}^+(y)}^+ +
   d_{d_{a,b}^+(x),y}^-+d_{x,d_{a,b}^+(y)}^- \\
   &\qquad +
   d_{d_{a,b}^-(x),y}^- -d_{x,d_{a,b}^-(y)}^- +
   d_{d_{a,b}^-(x),y}^+ -d_{x,d_{a,b}^-(y)}^+\\
 &=d_{\{a,b,x\},y}^+ + d_{\{a,b,x\},y}^- -
   d_{x,\{b,a,y\}}^+-d_{x,\{b,a,y\}}^-,
\end{split}
\]
and this gives \eqref{eq:idS3-2}.

The converse follows by straightforward computations of the Jacobi
identity.
\end{proof}

\smallskip

The reason behind the next definition will become clear in Theorem \ref{th:Malcev}.

\begin{definition}\label{de:generalizedMalcev}
A \emph{generalized Malcev algebra} is a triple $(M,xy,\{x,y,z\})$, where
$M$ is a vector space, $(x,y)\mapsto xy$ a bilinear anticommutative
product, and $(x,y,z)\mapsto \{x,y,z\}$ a trilinear product on $M$,
satisfying the conditions in \eqref{eq:identitiesS3}. The Lie
algebra $\frg(M)$ in \eqref{eq:gM}, with the action of $S_3$ in
Theorem \ref{th:S3}, is called the \emph{associated Lie algebra}.
\end{definition}

Therefore, Theorem \ref{th:S3} asserts that there is a generalized Malcev algebra attached to any Lie algebra with $S_3$-action, assuming that the cycle $\varphi$ acts nontrivially.

\begin{remark}\label{re:omegaink}
Let $\frg$ be a Lie algebra endowed with an action of $S_3$ by
automorphisms, so that $\varphi$ acts nontrivially, as in Theorem \ref{th:S3}, and assume that $\omega\in k$, where $\omega^3=1\ne \omega$. Then $\frg$ decomposes as $\frg=\frg_1\oplus\frg_\omega\oplus\frg_{\omega^2}$, where $\frg_\mu=\{ x\in\frg : \varphi(x)=\mu x\}$ for $\mu=1,\omega,\omega^2$. Then in the decomposition in \eqref{eq:gd+d-WM} we have:
\[
\frg_1=(U\otimes \frd^+)\oplus(U'\otimes\frd^-),\quad \frg_\omega\oplus\frg_{\omega^2}=W\otimes M.
\]
Besides, $\varphi(w_\omega)=\omega w_{\omega}$ and $\varphi(w_{\omega^2})=\omega^2w_{\omega^2}$, so $\frg_\omega=w_\omega\otimes M$ and $\frg_{\omega^2}=w_{\omega^2}\otimes M$. Also, for any $x,y,z\in M$:
\[
\begin{split}
[\tau(w_\omega\otimes x),\tau(w_{\omega}\otimes y)]&=
 [w_{\omega^2}\otimes x,w_{\omega^2}\otimes y]=w_{\omega}\otimes xy,\\
[[w_\omega\otimes x,\tau(w_\omega\otimes y)],w_\omega\otimes z]&=
 [[w_\omega\otimes x,w_{\omega^2}\otimes y],w_\omega\otimes z]\\
   &=[u\otimes d_{x,y}^++\hat u'\otimes d_{x,y}^-,w_\omega\otimes z]\\
   &=w_\omega\otimes\{x,y,z\},
\end{split}
\]
because of \eqref{eq:uprimewomega}, \eqref{eq:uhatprimewomega} and \eqref{eq:bracketS3bis}. Therefore $M$ can be identified to $\frg_\omega$ ($x\leftrightarrow w_\omega\otimes x$) with:
\[
\begin{cases} xy=[\tau(x),\tau(y)],\\
\{x,y,z\}=[[x,\tau(y)],z],\end{cases}
\]
for any $x,y,z\in\frg_\omega$.

Conversely, given a generalized Malcev algebra $(M,xy,\{x,y,z\})$, the associated Lie algebra $\frg(M)$ can be identified to
\[
\frd^+\oplus\frd^-\oplus \nu_0(M)\oplus\nu_1(M),
\]
where $\nu_i(M)$ denotes a copy of $M$ (think of $\nu_0(x)$ as $w_\omega\otimes x$ and $\nu_1(x)$ as $w_{\omega^2}\otimes x$, and identify $u\otimes d^+$ to $d^+$ and $\hat u'\otimes d^-$ to $d^-$) with Lie bracket given by:
\begin{itemize}
\item $[d_1^\pm,d_2^\pm]$ is given by the usual Lie bracket in $\frgl(M)$,
\item $[d^+,\nu_i(x)]=\nu_i(d^+(x))$,
\item $[d^-,\nu_i(x)]=(-1)^i\nu_i(d^-(x))$,
\item $[\nu_i(x),\nu_i(y)]=\nu_{1-i}(xy)$,
\item $[\nu_0(x),\nu_1(y)]=d_{x,y}^++d_{x,y}^-$,
\end{itemize}
for any $d_1^\pm,d_2^\pm,d^\pm\in\frd^\pm$, $x\in M$, and $i=0,1$.\hfill\qed
\end{remark}

It can be shown that some of the conditions \eqref{eq:idS3-3} in Definition \ref{de:generalizedMalcev} are redundant:

\begin{proposition}\label{pr:redundant}
Let $(M,xy,\{x,y,z\})$ be a triple where
$M$ is a vector space, $(x,y)\mapsto xy$ a bilinear anticommutative
product, and $(x,y,z)\mapsto \{x,y,z\}$ a trilinear product on $M$,
satisfying the conditions \eqref{eq:idS3-4}, \eqref{eq:idS3-1} and \eqref{eq:idS3-2}. Then:
\begin{romanenumerate}
\item For any $x,y,z,t\in M$, the equation
\[
\{xy,z,t\}+\{yz,x,t\}+\{zx,y,t\}=0
\]
holds. (This is the first part of \eqref{eq:idS3-3}.)
\item If either $M=M^2$ or $\{x\in M: xM^2=0\}=0$ hold, then for any $x,y,z,t\in M$,
\[
\{x,yz,t\}+\{y,zx,t\}+\{z,xy,t\}=0
\]
holds too. (This is the second part of \eqref{eq:idS3-3}.)
\end{romanenumerate}
\end{proposition}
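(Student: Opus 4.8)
For part (i) the plan is to reduce $\{xy,z,t\}$ to triples carrying no product in their first two slots, and then sum cyclically. First I would apply \eqref{eq:idS3-4} in the form $\{p,q,r\}-\{r,q,p\}=(pr)q$ to interchange the first and third arguments, pushing the product into the third slot, and then use \eqref{eq:idS3-1} to expand that product while a second use of \eqref{eq:idS3-4} rewrites $(xy)t=\{x,t,y\}-\{y,t,x\}$ in the leftover binary term. This gives
\[
\{xy,z,t\}=-\{z,t,x\}y-x\{z,t,y\}+\{x,t,y\}z-\{y,t,x\}z.
\]
Writing down the three cyclic versions (cycling $x,y,z$, with $t$ fixed) and moving every binary product to the form (triple)$\cdot$(element) by anticommutativity, the twelve resulting summands group according to their right-hand factor $x$, $y$ or $z$, and each group cancels. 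This yields $\{xy,z,t\}+\{yz,x,t\}+\{zx,y,t\}=0$, which is (i).

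For (ii) set $\Phi(t)=\{x,yz,t\}+\{y,zx,t\}+\{z,xy,t\}$ for fixed $x,y,z$. The first step is to check that $\Phi$ kills $M^2$: for $t=uv$, applying \eqref{eq:idS3-1} to the product in the third slot of each summand gives
\[
\Phi(uv)=-\bigl(\{yz,x,u\}+\{zx,y,u\}+\{xy,z,u\}\bigr)v-u\bigl(\{yz,x,v\}+\{zx,y,v\}+\{xy,z,v\}\bigr),
\]
and both parenthesized sums vanish by part (i). Under the hypothesis $M=M^2$ every $t$ is a sum of products, so $\Phi\equiv 0$ by linearity, which settles (ii) in that case.

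Under the other hypothesis $\{w\in M:wM^2=0\}=0$ the aim is instead to prove $\Phi(t)M^2=0$. The decisive step is to solve the generalized Jordan identity \eqref{eq:idS3-2} for the triple sitting in the \emph{middle} slot, which after the substitution $a=yz$, $b=x$ and outer arguments $u,t,v$ reads
\[
\{u,\{x,yz,t\},v\}=\{\{yz,x,u\},t,v\}-\{yz,x,\{u,t,v\}\}+\{u,t,\{yz,x,v\}\}.
\]
Summing over the cyclic permutations of $x,y,z$, each of the three groups on the right is a cyclic sum of the type handled by (i): in the first group the cycled data enters through the first slot of an outer triple, in the last group through its third slot, and in the middle group directly, so each group vanishes. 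Hence $\{u,\Phi(t),v\}=0$ for all $u,v$, and then \eqref{eq:idS3-4} gives $(uv)\Phi(t)=\{u,\Phi(t),v\}-\{v,\Phi(t),u\}=0$. By anticommutativity $\Phi(t)M^2=0$, so $\Phi(t)\in\{w:wM^2=0\}=0$, completing (ii).

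The main obstacle is exactly the middle-slot product: unlike the first and third slots there is no identity acting directly on a product in the second argument, so the clean reduction used for (i) is unavailable. The key idea is to absorb $\{x,yz,t\}$ into the second slot of an ambient triple $\{u,\cdot,v\}$ and let \eqref{eq:idS3-2} redistribute it into positions where part (i) applies; checking that all three resulting cyclic sums collapse is where the argument must really be verified.
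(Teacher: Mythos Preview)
Your proposal is correct and follows essentially the same route as the paper: for (i) both expand $\{xy,z,t\}$ via \eqref{eq:idS3-4} and \eqref{eq:idS3-1} and cancel after a cyclic sum, and for (ii) both first use \eqref{eq:idS3-1} with (i) to kill $\Phi$ on $M^2$, then plug into \eqref{eq:idS3-2} and sum cyclically (the paper uses the substitution $a\to xy,\ b\to z$ rather than your $a=yz,\ b=x$, but this yields the same identity $\{u,\Phi(t),v\}=0$) before applying \eqref{eq:idS3-4} to get $\Phi(t)M^2=0$.
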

\begin{proof}
For any $x,y,z,t\in M$, equations \eqref{eq:idS3-4} and \eqref{eq:idS3-1}, together with the anticommutativity of the bilinear product give:
\[
\begin{split}
((xy)t)z&=\{xy,z,t\}-\{t,z,xy\}\\
 &=\{xy,z,t\}+\{z,t,x\}y-\{z,t,y\}x.
\end{split}
\]
Therefore,
\[
\begin{split}
((xy)t)z+((yz)t)x&+((zx)t)y\\
 &=\{xy,z,t\}+\{yz,x,t\}+\{zx,y,t\} \\
 &\qquad +\{z,t,x\}y-\{z,t,y\}x + \{x,t,y\}z\\
 &\qquad\quad -\{x,t,z\}y +
   \{y,t,z\}x-\{y,t,x\}z\\
 &=\{xy,z,t\}+\{yz,x,t\}+\{zx,y,t\} \\
 &\qquad +\bigl(\{y,t,z\}-\{z,t,y\}\bigr)x +
   \bigl(\{z,t,x\}-\{x,t,z\}\bigr)y\\
   &\qquad\quad +
   \bigl(\{x,t,y\}-\{y,t,x\}\bigr)z\\
 &=\{xy,z,t\}+\{yz,x,t\}+\{zx,y,t\} \\
 &\qquad + ((xy)t)z+((yz)t)x+((zx)t)y,
\end{split}
\]
by \eqref{eq:idS3-4}, and this proves item (i).

Now, from equation \eqref{eq:idS3-2}  we have
\[
\{xy,z,\{a,t,b\}\}-\{a,t,\{xy,z,b\}\}
 =\{\{xy,z,a\},t,b\}-\{a,\{z,xy,t\},b\},
\]
which, together with (i) gives
\[
\{a,\{x,yz,t\}+\{y,zx,t\}+\{z,xy,t\},b\}=0,
\]
for any $a,b,x,y,z,t\in M$, which by \eqref{eq:idS3-4} yields
\[
M^2\bigl(\{x,yz,t\}+\{y,zx,t\}+\{z,xy,t\}\bigl)=0.
\]
Also, equation \eqref{eq:idS3-1}, together with (i), gives
\[
\{x,yz,u\}+\{y,zx,u\}+\{z,xy,u\}=0
\]
for any $x,y,z,t\in M$ and $u\in M^2$. Hence item (ii) follows.
\end{proof}

\begin{proposition}\label{pr:gsimpleS3}
Let $\frg$ be a Lie algebra endowed with an action of $S_3$ by
automorphisms and assume that $\varphi$ acts nontrivially. Let
$(M,xy,\{x,y,z\})$ be the corresponding generalized Malcev algebra,
and let $\frg(M)$ be the associated Lie algebra. If $\frg$ does not
contain proper ideals invariant under the action of $S_3$ (in
particular, if $\frg$ is simple), then $\frg$ and $\frg(M)$ are
isomorphic as Lie algebras with $S_3$-action.
\end{proposition}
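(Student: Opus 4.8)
The plan is to construct the natural candidate isomorphism and then invoke the hypothesis twice: once to show that all of $\frg$ is generated by its $W$-isotypic component, and once to kill the kernel. Throughout I keep the notation of \eqref{eq:gd+d-WM}, write $d^\pm_{x,y}$ for the structure maps appearing in \eqref{eq:bracketS3}, and set $\frd^\pm_{\mathrm{in}}=\espan{d^\pm_{x,y}:x,y\in M}\subseteq\frd^\pm$.

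First I would show that $\frg$ is generated, as a Lie algebra, by $W\otimes M$. Using the relations \eqref{eq:d+} and \eqref{eq:d-}, which give $[\frd^+,\frd^\pm_{\mathrm{in}}]\subseteq\frd^\pm_{\mathrm{in}}$ and $[\frd^-,\frd^\pm_{\mathrm{in}}]\subseteq\frd^\mp_{\mathrm{in}}$, one checks directly from \eqref{eq:bracketS3} that the subalgebra generated by $W\otimes M$ is exactly
\[
\frg'=(U\otimes\frd^+_{\mathrm{in}})\oplus(U'\otimes\frd^-_{\mathrm{in}})\oplus(W\otimes M),
\]
and that $\frg'$ is an ideal of $\frg$; it is obviously invariant under $S_3$. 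Since $\varphi$ acts nontrivially we have $W\otimes M\ne 0$, so $\frg'\ne 0$, and the absence of proper $S_3$-invariant ideals forces $\frg'=\frg$. In particular $\frd^+=\frd^+_{\mathrm{in}}$ and $\frd^-=\frd^-_{\mathrm{in}}$, so every element of $\frd^\pm$ is a linear combination of the structure maps $d^\pm_{x,y}$.

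Next, recall from the proof of Theorem \ref{th:S3} that the action of $\frd^+\oplus\frd^-$ on $M$ defines a representation $\rho\colon\frd^+\oplus\frd^-\to\frgl(M)$, with $\rho(d^+_{x,y})=\frac{1}{2}(\{x,y,.\}-\{y,x,.\})$ and $\rho(d^-_{x,y})=\frac{1}{2}(\{x,y,.\}+\{y,x,.\})$. By the previous step these values span $\frd^\pm$, so $\rho(\frd^+)$ and $\rho(\frd^-)$ are exactly the operator subspaces $\frd^+$ and $\frd^-$ of $\frgl(M)$ used to build $\frg(M)$ in \eqref{eq:gM}. I would then define $\Phi\colon\frg\to\frg(M)$ by
\[
\Phi(u\otimes d^+)=u\otimes\rho(d^+),\quad \Phi(u'\otimes d^-)=u'\otimes\rho(d^-),\quad \Phi(w\otimes x)=w\otimes x,
\]
which is well defined into $\frg(M)$ precisely because of the first step, and which commutes with the $S_3$-action by construction. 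That $\Phi$ is a Lie algebra homomorphism is a reformulation of the identities of Theorem \ref{th:S3}: it respects $[\frd^\pm,\frd^\pm]$ because $\rho$ is a representation; it respects the mixed brackets $[u\otimes d^+,w\otimes x]$ and $[u'\otimes d^-,w\otimes x]$ because $\Phi$ is the identity on $W\otimes M$ and sends $d^\pm$ to its action $\rho(d^\pm)$; and it respects $[w_1\otimes x,w_2\otimes y]$ because its $\frd^+$- and $\frd^-$-components $d^\pm_{x,y}$ are sent to $\rho(d^\pm_{x,y})$, which are exactly the operators defining the same bracket in $\frg(M)$.

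Surjectivity is then immediate: the image of $\Phi$ contains $W\otimes M=\Phi(W\otimes M)$ together with the spanning inner maps $\rho(d^\pm_{x,y})$ of $\frd^\pm$, hence all of $\frg(M)$. For injectivity I would observe that $\ker\Phi$, being the kernel of an $S_3$-equivariant Lie homomorphism, is an $S_3$-invariant ideal of $\frg$; since $\Phi$ restricts to the identity on $W\otimes M\ne 0$ we have $\ker\Phi\cap(W\otimes M)=0$, so $\ker\Phi\ne\frg$, and the hypothesis forces $\ker\Phi=0$. Thus $\Phi$ is an isomorphism of Lie algebras with $S_3$-action. The main obstacle I anticipate is bookkeeping rather than conceptual: one must carefully distinguish the \emph{abstract} multiplicity spaces $\frd^\pm\subseteq\frg$ from the \emph{concrete} operator spaces $\frd^\pm\subseteq\frgl(M)$ defining $\frg(M)$, and verify that $\rho$ carries the abstract maps $d^\pm_{x,y}$ exactly onto the operators $\frac{1}{2}(\{x,y,.\}\mp\{y,x,.\})$ appearing in the bracket table \eqref{eq:bracketS3bis} of $\frg(M)$. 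Once the first step guarantees that $\frd^\pm$ is spanned by these inner maps, matching the two bracket tables and the two short ideal-theoretic arguments complete the proof.
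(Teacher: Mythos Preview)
Your proof is correct and follows essentially the same approach as the paper: both arguments invoke the no-proper-$S_3$-invariant-ideals hypothesis on the same two ideals, namely the ideal generated by $W\otimes M$ (forcing $\frd^\pm=\frd^\pm_{\mathrm{in}}$) and the kernel of the action of $\frd^+\oplus\frd^-$ on $M$ (forcing $\rho$ to be injective). The only difference is presentational: the paper kills the kernel first so that $\frd^\pm$ can be literally regarded as sitting inside $\frgl(M)$, whereas you first pass to inner maps and then build the explicit homomorphism $\Phi$, but the content is the same.
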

\begin{proof}
Write $\frg=(U\otimes \frd^+)\oplus(U'\otimes \frd^-)\oplus(W\otimes
M)$ as in \eqref{eq:gd+d-WM}. Then
\[
\bigl(U\otimes\{d\in \frd^+:d(M)=0\}\bigr)\oplus
\bigl(U'\otimes\{d\in\frd^-: d(M)=0\}\bigr)
\]
is an ideal of $\frg$ invariant under the action of $S_3$, and
hence $\{d\in \frd^\pm: d(M)=0\}=0$, so it can be assumed that
$\frd^+,\frd^-$ are contained in $\frgl(M)$. Besides,
\[
 (U\otimes
d_{M,M}^+)\oplus (U'\otimes d_{M,M}^-)\oplus (W\otimes M)
\]
is the ideal of $\frg$ generated by $M$, which is necessarily the
whole $\frg$. Thus, $\frd^\pm=d_{M,M}^\pm$ and the result follows.
\end{proof}

\begin{example}\label{ex:G2} \textbf{(Simple Lie algebra of type $G_2$)}\newline
Assume $\omega\in k$, and let $E$ be a three dimensional vector space over $k$ endowed with a nondegenerate symmetric bilinear form $b$ with trivial determinant (that is $\det\bigl(b(e_i,e_j)\bigr)$ is a square for any basis $\{e_1,e_2,e_3\}$ of $E$). Fix then an orthogonal basis $\{e_1,e_2,e_3\}$ of $E$ with $\det\bigl(b(e_i,e_j)\bigr)=1$ and consider the trilinear alternating map $T:E\times E\times E\rightarrow k$ determined by $T(e_1,e_2,e_3)=1$. Then for any $u_1,u_2,u_3\in E$, $T(u_1,u_2,u_3)^2=\det\bigl(b(u_i,u_j)\bigr)$. There appears the attached cross product $v\times w$ determined by $b(u,v\times w)=T(u,v,w)$ for any $u,v,w\in E$, which satisfies the equation
\[
(u\times v)\times w=b(u,w)v-b(v,w)v
\]
for any $u,v,w\in E$ or, equivalently,
\[
b(x\times y,z\times t)=\begin{vmatrix} b(x,z)&b(x,t)\\ b(y,z)&b(y,t)\end{vmatrix}
\]
for any $x,y,z,t\in E$.

The split Lie algebra of type $G_2$ can be described (see \cite[page 348]{FH}) as
\[
\frg=E^*\oplus\frsl(E)\oplus E
\]
($E^*$ denotes the dual vector space), with bracket given by the usual bracket in the Lie subalgebra of zero trace endomorphisms $\frsl(E)$ and by:
\[
\left\{
\begin{aligned}
&[A,e]=A(e),\quad [A,f]=-fA\ \text{(composition of maps)},\\
&[e_1,e_2]=-2e_1\wedge e_2\in E^*,\ \text{where $e_1\wedge e_2: e\mapsto T(e_1,e_2,e)$,}\\
&[f_1,f_2]=2f_1\wedge f_2\in E,\\
 &\qquad\qquad\qquad\text{such that $T(f_1\wedge f_2,e_1,e_2)=f_1(e_1)f_2(e_2)-f_1(e_2)f_2(e_1)$,}\\
&[e,f]=3f(.)e-f(e)I,\ \text{$I$ denotes the identity map on $E$,}
\end{aligned}\right.
\]
for any $e,e_1,e_2\in E$, $f,f_1,f_2\in E^*$ and $A\in \frsl(E)$.

The symmetric group $S_3$ acts on $\frg$ by automorphisms as follows:
\[
\begin{split}
&\varphi(e)=\omega e,\ \varphi(f)=\omega^2f,\ \varphi(A)=A,\\
&\tau(e)=-b(e,.),\ \tau(A)=-A^t,
\end{split}
\]
for any $e\in E$, $f\in F$ and $A\in \frsl(E)$, where $A^t$ denotes the adjoint relative to $b$, that is, $b(Ae,e')=b(e,A^te')$ for any $e,e'\in E$.

Then $\frg_\omega=E$ is a generalized Malcev algebra (see Remark \ref{re:omegaink}) with
\[
\begin{split}
&xy=[\tau(x),\tau(y)]=2b(x,.)\wedge b(y,.)=2x\times y,\\
&\{x,y,z\}=[[x,\tau(y)],z]=-[3b(y,.)x-b(x,y)I,z]=b(x,y)z-3b(y,z)x,
\end{split}
\]
for any $x,y,z\in E$.

By Proposition \ref{pr:gsimpleS3}, $\frg$ is isomorphic to the associated Lie algebra $\frg(E)$. Thus, the central simple split Lie algebra of type $G_2$ is
completely determined by the three dimensional generalized Malcev algebra $(E,xy,\{x,y,z\})$.\hfill\qed
\end{example}

\bigskip
\section{Lie algebras with $S_3$-action and Malcev
algebras}\label{se:S3Malcev}

Some noteworthy examples of generalized Malcev algebras are provided
by Malcev algebras (see \cite{Sagle}).

\begin{theorem}\label{th:Malcev}
Let $(M,xy,\{x,y,z\})$ be a generalized Malcev algebra such that
$\{x,y,z\}$ is skew-symmetric on $x$ and $y$. Then $(M,xy)$ is a
Malcev algebra and
\begin{equation}\label{eq:triplebinary}
2\{x,y,z\}=(xy)z+x(yz)-y(xz)
\end{equation}
for any $x,y,z\in M$.

Conversely, if $(M,xy)$ is a Malcev algebra and a triple product
$\{.,.,.\}$ is defined on $M$ by the formula in
\eqref{eq:triplebinary}, then $(M,xy,\{x,y,z\})$ is a
generalized Malcev algebra.
\end{theorem}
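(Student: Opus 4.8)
The plan is to push everything down to the binary product and then invoke the classical description of Malcev algebras through their inner derivations. First I would record that, once $\{x,y,z\}$ is skew-symmetric in $x,y$, identity \eqref{eq:idS3-4} already forces \eqref{eq:triplebinary}: expanding $(xy)z$, $x(yz)=-(yz)x$ and $y(xz)=-(xz)y$ by means of \eqref{eq:idS3-4}, and then simplifying with anticommutativity together with $\{a,b,c\}=-\{b,a,c\}$, every term cancels in pairs except for a surviving $2\{x,y,z\}$. Conversely, if $\{x,y,z\}$ is \emph{defined} by \eqref{eq:triplebinary} from an anticommutative product, then interchanging $x$ and $y$ shows at once that $\{x,y,z\}$ is skew-symmetric in $x,y$, and the very same computation read backwards shows that \eqref{eq:idS3-4} holds automatically. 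Thus in both directions the triple product is pinned down by the binary one via \eqref{eq:triplebinary}, and the statement reduces to an assertion purely about $(M,xy)$.

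The next observation is that, as an operator in its last slot, $2\{x,y,\cdot\}=L_{xy}+[L_x,L_y]$, where $L_u$ is left multiplication by $u$; since the product is anticommutative one has $L_u=-R_u$, so this operator coincides (up to the usual sign conventions) with the standard inner derivation $D_{x,y}$ of Malcev theory. Under the standing skew-symmetry, identity \eqref{eq:idS3-1} reads $\{a,b,xy\}=\{a,b,x\}y+x\{a,b,y\}$, that is, it says precisely that each $D_{x,y}:=\{x,y,\cdot\}$ is a derivation of $(M,xy)$. The crux is then the classical fact (see \cite{Sagle}) that an anticommutative algebra is a Malcev algebra if and only if all the operators $L_{xy}+[L_x,L_y]$ are derivations. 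Granting this, the forward direction is immediate: \eqref{eq:idS3-1} supplies the derivation property, hence $(M,xy)$ is Malcev, while the formula \eqref{eq:triplebinary} has already been obtained.

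For the converse I would assume $(M,xy)$ is Malcev, define $\{x,y,z\}$ by \eqref{eq:triplebinary}, and check the conditions \eqref{eq:identitiesS3} in turn. Skew-symmetry in $x,y$ and identity \eqref{eq:idS3-4} are free by the first paragraph, and \eqref{eq:idS3-1} is exactly the derivation property of $D_{x,y}$, which holds because $(M,xy)$ is Malcev. Identity \eqref{eq:idS3-2} then costs nothing: the triple product is a fixed expression in the bilinear product, so a derivation $D_{a,b}$ of that product is, by the Leibniz rule, automatically a derivation of $\{.,.,.\}$, and this is precisely what \eqref{eq:idS3-2} asserts when rewritten with skew-symmetry. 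Finally, with \eqref{eq:idS3-4}, \eqref{eq:idS3-1} and \eqref{eq:idS3-2} available, Proposition \ref{pr:redundant}(i) yields the first half of \eqref{eq:idS3-3} for free, and the second half follows from the first by applying $\{a,b,c\}=-\{b,a,c\}$ to each summand.

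I expect the one genuinely nontrivial input to be the equivalence ``all $L_{xy}+[L_x,L_y]$ are derivations $\iff$ Malcev'', and in particular its less routine implication ``derivations $\Rightarrow$ Malcev'', which is what the forward direction needs. I would either quote it from \cite{Sagle}, or, to keep the argument self-contained, establish it by fully linearizing the Malcev identity in its repeated variable and matching the resulting multilinear identity with the (already multilinear) derivation identity; the hypothesis $\charac k\ne 2,3$ is exactly what makes this linearization reversible. The remaining points demand only care rather than ideas: the sign bookkeeping in passing between $L$ and $R$ through anticommutativity, and the verification that the skew-symmetric reduction of \eqref{eq:idS3-3} to its first half is legitimate.
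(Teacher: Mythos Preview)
Your approach is correct but takes a genuinely different route from the paper, and one step deserves more care.

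For the forward direction, the paper derives \eqref{eq:triplebinary} from skew-symmetry and \eqref{eq:idS3-4} just as you do, but then argues by direct computation: using \eqref{eq:idS3-1} it shows $J(x,y,xy)=0$ (hence $J(x,y,xz)+J(x,z,xy)=0$), and then it brings in \eqref{eq:idS3-3} explicitly to obtain $J(x,y,xz)=J(x,y,z)x$. You instead use only \eqref{eq:idS3-1}, appealing to the characterization ``$(M,xy)$ is Malcev $\iff$ every $L_{xy}+[L_x,L_y]$ is a derivation''. This equivalence is true in characteristic $\ne 2,3$, but your proposed self-contained justification---linearize the Malcev identity and ``match'' it against the derivation identity---does not work as stated. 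Writing $D(x,y)t=2(xy)t-J(x,y,t)$, the derivation condition unwinds to
\[
2J(xy,z,w)+J(x,y,zw)=J(x,y,z)w+zJ(x,y,w),
\]
which is \emph{not} the same multilinear identity as the linearized Malcev identity. One recovers Malcev only after combining two specializations (for instance $w=x$ and the instance with $(x,y,z,w)\mapsto(x,z,x,y)$), using $J(xz,x,y)=J(x,y,xz)$, and dividing by $3$. So the black box is valid, but it costs a short computation rather than a literal match; Sagle proves only the direction Malcev $\Rightarrow$ derivation explicitly. What your route buys is the observation that, among the axioms \eqref{eq:identitiesS3}, already \eqref{eq:idS3-4} and \eqref{eq:idS3-1} with skew-symmetry force Malcev; the paper's argument trades this for a hands-on computation that also uses \eqref{eq:idS3-3}.

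For the converse, the paper quotes from \cite{Sagle} both that $D(x,y)$ is a derivation and that $D(xy,z)+D(yz,x)+D(zx,y)=0$, reading off \eqref{eq:idS3-1}--\eqref{eq:idS3-3} directly and checking \eqref{eq:idS3-4} by hand. Your variant---get \eqref{eq:idS3-1} from the derivation property, deduce \eqref{eq:idS3-2} because the triple product is a fixed polynomial in the binary one, and then obtain \eqref{eq:idS3-3} from Proposition~\ref{pr:redundant}(i) plus skew-symmetry---is a clean alternative that avoids quoting the cocycle identity separately.
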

\begin{proof}
This result appears essentially, with a very different formulation,
in \cite[Theorem 1]{Mikheev} and in \cite{Grishkov}. We include a
proof for completeness.

Assume first that $(M,xy,\{x,y,z\})$ is a generalized Malcev algebra
such that $\{x,y,z\}$ is skew-symmetric on $x$ and $y$. Then
\eqref{eq:idS3-4} gives
\[
\begin{split}
(xy)z&+x(yz)-y(xz)\\
 &=(xy)z-(yz)x+(xz)y\\
 &=\{x,z,y\}-\{y,z,x\}-\{y,x,z\}+\{z,x,y\}+\{x,y,z\}-\{z,y,x\}\\
 &=2\{x,y,z\},
\end{split}
\]
so \eqref{eq:triplebinary} holds.

Now, with $J(x,y,z)=(xy)z+(yz)x+(zx)y$, this is equivalent to
\begin{equation}\label{eq:Jbrackets}
2(xy)z-J(x,y,z)=2\{x,y,z\},
\end{equation}
for any $x,y,z\in M$, and then
\[
\begin{split}
J(x,y,xy)&=2(xy)(xy)-2\{x,y,xy\}=-2\{x,y,xy\}\quad\text{(as $z^2=0$
for any $z$)}\\
&=2\{y,x,x\}y+2x\{y,x,y\}\quad\text{(by \eqref{eq:idS3-1})}\\
&=2((yx)x)y+2x((yx)y)\quad\text{(by \eqref{eq:triplebinary})}\\
&=-2J(x,y,xy).
\end{split}
\]
Hence $J(x,y,xy)=0$ for any $x,y\in M$, so
\begin{equation}\label{eq:JJ}
J(x,y,xz)+J(x,z,xy)=0
\end{equation}
for any $x,y,z\in M$. Now equation \eqref{eq:idS3-3} gives
\[
\{xy,z,x\}+\{yz,x,x\}+\{zx,y,x\}=0,
\]
which, by \eqref{eq:Jbrackets} becomes
\[
\begin{split}
0&=J(x,y,z)x-\frac{1}{2}\bigl(J(xy,z,x)+J(yz,x,x)+J(zx,y,x)\bigr)\\
&=J(x,y,z)x-J(x,y,xz),
\end{split}
\]
because of \eqref{eq:JJ}. Hence $J(x,y,xz)=J(x,y,z)x$ for any
$x,y,z\in M$, and this is equivalent to $M$ being a Malcev algebra
(see \cite{Sagle}).

Conversely, if $(M,xy)$ is a Malcev algebra and the triple product
is defined by \eqref{eq:triplebinary}, then for any $x,y$:
\[
\{x,y,.\}=\frac{1}{2}\bigl(\ad_{xy}+[\ad_x,\ad_y]\bigl)=D(x,y),
\]
($\ad_x:y\mapsto xy$), which is known to be a derivation of $(M,xy)$
 satisfying $D(xy,z)+D(yz,x)+D(zx,y)=0$ (see \cite{Sagle}).
Hence the conditions \eqref{eq:idS3-1}, \eqref{eq:idS3-2} and
\eqref{eq:idS3-3} are satisfied. Also,
$2\bigl(\{x,z,y\}-\{y,z,x\}\bigr)=(xz)y+x(zy)-z(xy)-(yz)x-y(zx)+z(yx)=2(xy)z$
for any $x,y,z\in M$ because of the anticommutativity of the
product, so condition \eqref{eq:idS3-4} holds too.
\end{proof}

\smallskip

Note that in Example \ref{ex:G2} there appears a three-dimensional generalized Malcev algebra $(E,xy,\{x,y,z\})$, where $(E,xy)$ is a Lie (and hence Malcev) algebra, but where the triple product $\{x,y,z\}$ is not given by formula \eqref{eq:triplebinary}, as it is not even skew symmetric in $x$ and $y$.

\smallskip

\begin{remark}\label{re:Lietriality}
Let $(M,xy)$ be a Malcev algebra, and define the triple product
$\{.,.,.\}$ by \eqref{eq:triplebinary}. Then $\{x,y,.\}+\{y,x,.\}$
is identically $0$, so the associated Lie algebra $\frg(M)$ is just
\[
\frg(M)=(U\otimes \frd^+)\oplus (W\otimes M),
\]
which is a \emph{Lie algebra with triality} in the notation of
\cite{Grishkov}. This is a Lie algebra with an action of $S_3$ by
automorphisms such that the alternating module does not appear in
the decomposition into a direct sum of irreducible $S_3$-modules.

Besides, if $(M,xy)$ is a Lie algebra, then $\{x,y,.\}=\frac{1}{2}\bigl(\ad_{xy}+[\ad_x,\ad_y]\bigr)=\ad_{xy}$, so $\frg(M)=(U\otimes \ad_M)\oplus (W\otimes M)$. If the center of $M$ is trivial, then $\ad_M$ can be identified with $M$ itself, and hence $\frg(M)\simeq(U\oplus W)\otimes M=k^3\otimes M\simeq M^3$. In this way, $\frg(M)$ is seen to be isomorphic to the direct sum of three copies of $M$, with componentwise multiplication.
\hfill\qed
\end{remark}

\smallskip

\begin{remark}
With the same conventions as in Remark \ref{re:Lietriality}, the transposition $\tau$ provides a
$\bZ_2$-grading of $\frg(M)$ whose even part is $(u\otimes
\frd^+)\oplus (w_+\otimes M)$. Since $w_+\bullet w_+=w_+$, $(w_+\,\vert\,
w_+)=2$ and $d_{x,y}^+=\{x,y,.\}=\frac{1}{2}D(x,y)$, it follows that
this even part is isomorphic to
\[
D(M,M)\oplus M,
\]
with product given by the Lie bracket of derivations in $D(M,M)$,
and by $[d,x]=d(x)$ and $[x,y]=D(x,y)+xy$ for any $d\in D(M,M)$ and
$x,y\in M$. If the center of $M$ is trivial, $M$ can be identified
to $\ad_M$. Then the Lie multiplication algebra of $M$ is
$L(M)=D(M,M)\oplus\ad_M$ (see \cite{Sagle}), and the map
$D(M,M)\oplus M\rightarrow L(M)$, $D(x,y)\mapsto D(x,y)$, $x\mapsto
-\ad_x$ is an isomorphism. That is, the fixed subalgebra of
$\frg(M)$ by the action of $\tau$ is isomorphic to the Lie
multiplication algebra of $M$.\hfill\qed
\end{remark}

\begin{remark}
Given a Malcev algebra $M$, in \cite{ChemaIvan} a Lie algebra
$\calL(M)$ is defined by generators $\{\lambda_a,\rho_a:a\in M\}$
and relations (see \cite[Eq. (1)]{ChemaIvan}):
\[
\begin{split}
&\lambda_{\alpha a+\beta b}=\alpha\lambda_a+\beta\lambda_b,\quad
 \rho_{\alpha a+\beta b}=\alpha\rho_a+\beta\rho_b,\\
&[\lambda_a,\lambda_b]=\lambda_{ab}-2[\lambda_a,\rho_b],\quad
 [\rho_a,\rho_b]=-\rho_{ab}-2[\lambda_a,\rho_b],\\
&[\lambda_a,\rho_b]=[\rho_a,\lambda_b],
\end{split}
\]
for any $a,b\in M$ and $\alpha,\beta\in k$. Then $S_3$ acts on this
Lie algebra by automorphisms as follows:
\[
\begin{split}
&\tau(\lambda_a)=-\rho_a\\
&\varphi(\lambda_a)=-\lambda_a-\rho_a,\ \varphi(\rho_a)=\lambda_a.
\end{split}
\]
It is straightforward to check that the relations are preserved.
Actually, $\tau$ coincides with the automorphism $\zeta\eta\zeta$
while $\varphi$ coincides with the automorphism $\zeta\eta$ in
\cite[p.~386]{ChemaIvan}. The arguments in the proof of Proposition
3.2 in \cite{ChemaIvan} can be used to prove that the envelope
$\calL(M)$ is isomorphic to our $\frg(M)$, by means of
\[
\begin{split}
\rho_a-\lambda_a&\mapsto w_+\otimes a,\\
\rho_a+\lambda_a&\mapsto \frac{1}{3}w_-\otimes a.\ \qed
\end{split}
\]
\end{remark}

\medskip

Recall that given any nonassociative algebra $(A,\cdot)$, the
generalized alternative nucleus is the subspace
\[
N_{alt}(A,\cdot)=\{ a\in A: (a,x,y)=-(x,a,y)=(x,y,a)\ \forall x,y\in
A\},
\]
where $(x,y,z)=(x\cdot y)\cdot z-x\cdot (y\cdot z)$ is the
associator of the elements $x,y,z$. The generalized alternative
nucleus is a Malcev algebra under the commutator $[x,y]=x\cdot
y-y\cdot x$ (see \cite{ChemaIvan}).

Given a unital algebra with involution $(A,\cdot,-)$, it has been
shown in Corollary \ref{co:S3lrta} that $\lrt(A,\cdot,-)$ is a Lie
algebra with $S_3$-action ($S_3\simeq S_4/V_4$), with no irreducible
component isomorphic to the alternating module.

\begin{proposition}
Let $(A,\cdot,-)$ be a unital algebra with involution. Then the
generalized Malcev algebra associated to the Lie algebra
$\lrt(A,\cdot,-)$ is isomorphic to $(M,[x,y],\{x,y,z\})$, where
\[
M=\{a\in N_{alt}(A,\cdot): \bar a=-a\},
\]
and where $2\{x,y,z\}=[[x,y],z]+[x,[y,z]]-[y,[x,z]]$ for any
$x,y,z\in M$.
\end{proposition}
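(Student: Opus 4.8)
The plan is first to reduce to Theorem~\ref{th:Malcev} and then to locate the underlying Malcev algebra explicitly inside $\lrt(A,\cdot,-)$. As $(A,\cdot,-)$ is unital, Corollary~\ref{co:S3lrta} gives $\sder(A,\cdot,-)=0$, so by Proposition~\ref{pr:S3lrtA}(ii) no alternating submodule occurs; in the decomposition $\lrt(A,\cdot,-)=(U\otimes\frd^+)\oplus(U'\otimes\frd^-)\oplus(W\otimes M)$ of Theorem~\ref{th:S3} this forces $\frd^-=0$. Hence the triple product of the associated generalized Malcev algebra is skew-symmetric in its first two arguments, and Theorem~\ref{th:Malcev} applies: $(M,xy)$ is a Malcev algebra with $2\{x,y,z\}=(xy)z+x(yz)-y(xz)$. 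It therefore suffices to identify the Malcev algebra $(M,xy)$ with $\bigl(\{a\in N_{alt}(A,\cdot):\bar a=-a\},[\cdot,\cdot]\bigr)$, where $[a,b]=a\cdot b-b\cdot a$; the displayed formula then becomes exactly $2\{x,y,z\}=[[x,y],z]+[x,[y,z]]-[y,[x,z]]$.

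To identify $M$, I would extend scalars so that a primitive cube root of unity $\omega$ lies in $k$ and use Remark~\ref{re:omegaink}, which realizes $M$ as the eigenspace $\frg_\omega=\{z\in\lrt(A,\cdot,-):\varphi(z)=\omega z\}$ with product $xy=[\tau(x),\tau(y)]$. Writing elements of $\lrt$ as triples $(d_0,d_1,d_2)$ with the action \eqref{eq:phitaulrt}, the eigenspace $\frg_\omega$ consists of the triples $(d,\omega^2d,\omega d)$, and the three defining equations of $\lrt$ collapse to the single condition $\bar d(uv)=\omega^2 d(u)v+\omega\,u\,d(v)$ for all $u,v\in A$. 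Evaluating this at $u=v=1$, then at $u=1$ and at $v=1$, forces $\bar a=-a$ and $d=L_a-\omega^2R_a$ with $a=d(1)/(1-\omega^2)$; conversely, for $\bar a=-a$ the relations $(a,x,y)=-(x,a,y)=(x,y,a)$ show that $d_a:=L_a-\omega^2R_a$ satisfies the condition precisely when $a\in N_{alt}(A,\cdot)$. This yields a linear bijection $a\mapsto d_a$ from $\{a\in N_{alt}(A,\cdot):\bar a=-a\}$ onto $\frg_\omega\cong M$.

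It then remains to match products. Since $\tau(d_a,\omega^2d_a,\omega d_a)=(\bar d_a,\omega\bar d_a,\omega^2\bar d_a)$ with $\bar d_a=\omega^2L_a-R_a$, the product $[\tau(x),\tau(y)]$ is governed by the operator commutator $[\bar d_a,\bar d_b]$. Expanding it and using the generalized alternative nucleus identities $[L_a,L_b]=L_{[a,b]}-2[L_a,R_b]$, $[R_a,R_b]=-R_{[a,b]}-2[L_a,R_b]$ and $[L_a,R_b]=[R_a,L_b]$ (which encode that $N_{alt}(A,\cdot)$ is a Malcev algebra under the commutator; cf.\ \cite{ChemaIvan}), together with $1+\omega+\omega^2=0$, collapses $[\bar d_a,\bar d_b]$ to a scalar multiple of $d_{[a,b]}$; absorbing that scalar by rescaling the identification $a\mapsto d_a$ by a power of $\omega$ makes the induced product exactly the commutator. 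Because the resulting binary and triple products are $k$-bilinear and $k$-trilinear maps compared through $k$-rational data, their agreement over $k(\omega)$ descends to $k$. The main obstacle is exactly this last stretch: verifying that $L_a-\omega^2R_a$ is a Lie related triple precisely for $a\in N_{alt}(A,\cdot)$ and that the induced binary product is the commutator. Both hinge entirely on the associator identities of the generalized alternative nucleus, and the careful bookkeeping of the $S_3$-action, of the scalar $\omega$, and of the descent to the ground field is the delicate part; everything else is formal.
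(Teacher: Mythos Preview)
Your route is genuinely different from the paper's. The paper does not pass through $\omega$-eigenspaces at all: it quotes \cite[Corollary~3.5]{AllisonFaulkner} to obtain, directly over $k$, the decomposition
\[
\lrt(A,\cdot,-)=\{(d,d,d):d\in\der(A,\cdot,-)\}\oplus
\{(L_{s_2}-R_{s_3},L_{s_3}-R_{s_1},L_{s_1}-R_{s_2}):s_i\in M,\ s_1+s_2+s_3=0\},
\]
with $M=\{a\in N_{alt}(A,\cdot):\bar a=-a\}$. That already identifies $M$ and the $W$-structure; the paper then brackets two elements of the second summand to read off the binary product $[s,t]$ and $d^+_{s,t}=[L_s,L_t]+[R_s,R_t]+[L_s,R_t]$, and invokes Theorem~\ref{th:Malcev}. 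Your approach amounts to rederiving the Allison--Faulkner description from the eigenspace side, which is instructive but needs more care in two places.

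First, the claim that $d_a=L_a-\omega^2R_a$ lies in the related-triple condition \emph{precisely} when $a\in N_{alt}(A,\cdot)$ is only half argued. For $\bar a=-a$ the relation $\bar d(uv)=\omega^2d(u)v+\omega\,u\,d(v)$ reduces to
\[
-\omega^2(a,u,v)+\omega(u,a,v)-(u,v,a)=0,
\]
a single $k(\omega)$-linear equation among the three associators, a priori weaker than the two equations defining $N_{alt}$. What closes the gap is the involution: applying the bar (using $\overline{(x,y,z)}=-(\bar z,\bar y,\bar x)$ and $\bar a=-a$) yields the companion equation $-(a,u,v)+\omega(u,a,v)-\omega^2(u,v,a)=0$; subtracting gives $(a,u,v)=(u,v,a)$, and back-substitution gives $(a,u,v)=-(u,a,v)$. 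You should make this step explicit.

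Second, the descent is not as automatic as you suggest. Your bijection $a\mapsto d_a$ (and its rescaling) is $k(\omega)$-linear, not $k$-linear, so agreement of the products over $k(\omega)$ does not by itself produce a $k$-isomorphism of generalized Malcev algebras; ``compared through $k$-rational data'' is not an argument. One honest fix is to avoid $\omega$ and identify $M$ over $k$ as the $\tau$-fixed part of $\ker(1+\varphi+\varphi^2)\subset\lrt(A,\cdot,-)$, then show every such triple has the form $(L_{s_2}-R_{s_3},L_{s_3}-R_{s_1},L_{s_1}-R_{s_2})$ with $s_i\in M$ and $\sum s_i=0$; but that is exactly the Allison--Faulkner statement the paper cites.
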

\begin{proof}
The result in \cite[Corollary 3.5]{AllisonFaulkner} shows that
\[
\begin{split}
\lrt(&A,\cdot,-)\\
 &=\{(d,d,d): d\in \der(A,\cdot,-)\}\\
&\quad \oplus
\{(L_{s_2}-R_{s_3},L_{s_3}-R_{s_1},L_{s_1}-R_{s_2}):s_1,s_2,s_3\in
M,\, s_1+s_2+s_3=0\},
\end{split}
\]
with $M$ as above. Here $L$ and $R$ denote the left and right
multiplications in $(A,\cdot)$. This shows that, as vector spaces,
there is an isomorphism:
\[
\begin{split}
\Psi:(U\otimes \der(A,\cdot,-))\oplus(W\otimes M)&\rightarrow
\lrt(A,\cdot,-)\\
u\otimes d\qquad&\mapsto \ (d,d,d)\\
(\alpha_1,\alpha_2,\alpha_3)\otimes s\qquad&\mapsto
\bigl(L_{\alpha_2s}-R_{\alpha_3s},
L_{\alpha_3s}-R_{\alpha_1s},L_{\alpha_1s}-R_{\alpha_2s}\bigr)\\[-4pt]
\text{\small ($\alpha_1+\alpha_2+\alpha_3=0$)}\qquad &
\end{split}
\]
for $d\in\der(A,\cdot,-)$, $\alpha_i\in k$, $i=1,2,3$, and $s\in M$.

Now, given $w_1=(\alpha_1,\alpha_2,\alpha_3),\
w_2=(\beta_1,\beta_2,\beta_3)\in W$ (so
$\alpha_1+\alpha_2+\alpha_3=0=\beta_1+\beta_2+\beta_3$), and $s,t\in
M$, the arguments in the proof of \cite[Corollary
3.5]{AllisonFaulkner} give:
{\small
\[
\begin{split}
\bigl[\bigl(L_{\alpha_2s}-R_{\alpha_3s},
L_{\alpha_3s}-R_{\alpha_1s},&L_{\alpha_1s}-R_{\alpha_2s}\bigr),
\bigl(L_{\beta_2t}-R_{\beta_3t},
L_{\beta_3t}-R_{\beta_1t},L_{\beta_1t}-R_{\beta_2t}\bigr)\bigr]\\
&=\frac{1}{3}(D,D,D)+\bigl(L_{a_2}-R_{a_3},L_{a_3}-R_{a_1},L_{a_1}-R_{a_2}\bigr)
\end{split}
\]}
where:
\[
\begin{split}
D&=[L_{\alpha_2s}-R_{\alpha_3s},L_{\beta_2s}-R_{\beta_3t}]\\
&\qquad +
[L_{\alpha_3s}-R_{\alpha_1s},L_{\beta_3s}-R_{\beta_1t}] +
[L_{\alpha_1s}-R_{\alpha_2s},L_{\beta_1s}-R_{\beta_2t}]\\
 & =
(\alpha_1\beta_1+\alpha_2\beta_2+\alpha_3\beta_3)\bigl([L_s,L_t]+[R_s,R_t]\bigr)\\
&\qquad -(\alpha_2\beta_3+\alpha_3\beta_1+\alpha_1\beta_2)[L_s,R_t]
 -
(\alpha_3\beta_2+\alpha_1\beta_3+\alpha_2\beta_1)[R_s,L_t].
\end{split}
\]
But
\[
[L_s,R_t](x)=-(s,x,t)=(t,x,s)=-[L_t,R_s](x),
\]
so
$[L_s,R_t]=[R_s,L_t]$. Besides,
\[
\sum_{1\leq i\ne j\leq
3}\alpha_i\beta_j=(\alpha_1+\alpha_2+\alpha_3)(\beta_1+\beta_2+\beta_3)
-\sum_{i=1}^3\alpha_i\beta_i.
\]
Hence:
\[
\begin{split}
D&=(\alpha_1\beta_1+\alpha_2\beta_2+\alpha_3\beta_3)
  \bigl([L_s,L_t]+[R_s,R_t]+[L_s,R_t]\bigr)\\
  &=3(w_1\,\vert\, w_2)\bigl([L_s,L_t]+[R_s,R_t]+[L_s,R_t]\bigr).
\end{split}
\]

Also, $a_1=\frac{1}{3}(b_3-b_2)$ (and cyclically), with
$b_1=[L_{\alpha_2s}-R_{\alpha_3s},L_{\beta_2t}-R_{\beta_3t}](1)$
(and cyclically). Hence
\[
\begin{split}
b_1&=(\beta_2-\beta_3)(\alpha_2s\cdot t-\alpha_3t\cdot s)
  -(\alpha_2-\alpha_3)(\beta_2t\cdot s-\beta_3s\cdot t)\\
  &=(\alpha_2\beta_2-\alpha_3\beta_3)[s,t],
\end{split}
\]
and
\[
a_1=\frac{1}{3}(2\alpha_1\beta_1-\alpha_2\beta_2-\alpha_3\beta_3)[s,t].
\]

Since
\[
w_1\bullet
w_2=\frac{1}{3}\bigl(2\alpha_1\beta_1-\alpha_2\beta_2-\alpha_3\beta_3,
2\alpha_2\beta_2-\alpha_3\beta_3-\alpha_1\beta_1,
2\alpha_3\beta_3-\alpha_1\beta_1-\alpha_2\beta_2\bigr),
\]
it follows that
\[
\begin{split}
[\Psi(&w_1\bullet s),\Psi(w_2\bullet t)]\\
&=
 \Psi\bigl((w_1\,\vert\, w_2)u\otimes
 \bigl([L_s,L_t]+[R_s,R_t]+[L_s,R_t]\bigr)\bigr)+
 \Psi\bigl((w_1\bullet w_2)\otimes [s,t]\bigr)
\end{split}
\]
and this shows that the attached generalized Malcev algebra is
the triple $(M,[x,y],\{x,y,z\})$, where
$\{x,y,z\}=\bigl([L_x,L_y]+[R_x,R_y]+[L_x,R_y]\bigr)(z)$. Since
$(M,[.,.])$ is a Malcev algebra (see \cite{ChemaIvan}), Theorem \ref{th:Malcev} finishes
the proof.
\end{proof}

\medskip

A distinguished example is given by any octonion algebra
$(\bO,\cdot,-)$ over $k$ with its standard involution. Since $\bO$
is alternative, $N_{alt}(\bO,\cdot)=\bO$, so $M=\bO_0=\{x\in\bO:
t(x)=x+\bar x=0\}$. Then, it is well-known by the Principle of
Triality (see \cite[Chapter VIII]{KMRT} and references therein) that
$\lrt(\bO,\cdot,-)$ is isomorphic to the orthogonal Lie algebra
$\frso(\bO,n)$, where $n$ denotes the norm of $\bO$
($n(x)=x\cdot\bar x$). The action of $S_3$ decomposes this Lie
algebra as
\[
\frso(\bO,n)=(U\otimes \der(\bO,\cdot))\oplus(W\otimes \bO_0).
\]
The associated Malcev algebra is the central simple non-Lie Malcev algebra $\bO_0$. Recall that any central simple non-Lie Malcev algebra appears in this way (see \cite{Fil76} and \cite{Kuz68}).

\bigskip
\section{Examples}\label{se:S3examples}

Other noteworthy examples of generalized Malcev algebras consist of
those whose bilinear multiplication is trivial. Equations
\eqref{eq:identitiesS3} show that these are just the Jordan triple
systems (see \cite{Meyberg}).

Given a Jordan triple system $T$ with triple product denoted by
$\{x,y,z\}$, consider the Lie subalgebra of $\frgl(T)\oplus\frgl(T)$
defined by:
\begin{multline*}
\{(d_1,d_2): d_i(\{x,y,z\})=\{d_i(x),y,z\}
 +\{x,d_{3-i}(y),z\}+
 \{x,y,d_i(z)\}\\ \forall x,y,z\in T,\, \forall i=1,2\}.
\end{multline*}
Then $\frs(T)=\espan{\bigl(\{x,y,.\},-\{y,x,.\}\bigr): x,y\in
T}$ is a subalgebra of $\frgl(T)\oplus\frgl(T)$ and the
Tits-Kantor-Koecher Lie algebra of $T$ is the Lie algebra
\[
\calK(T)=TKK(T)=T\oplus\frs(T)\oplus \hat T,
\]
where $\hat T$ is just a copy of $T$ (and given an element $x\in T$,
its copy in $\hat T$ will be denoted by $\hat x$), with bracket
given by:
\[
\begin{split}
&[T,T]=[\hat T,\hat T]=0,\ \text{$\frs(T)$ is a subalgebra of
$\calK(T)$,}\\
&[x,\hat y]=\bigl(\{x,y,.\},-\{y,x,.\}\bigr),\\
&[(d_1,d_2),x]=d_1(x),\ [(d_1,d_2),\hat y]=\widehat{d_2(y)},
\end{split}
\]
for any $(d_1,d_2)\in\frs(T)$ and $x,y\in T$.

\begin{proposition}
Assume $\omega\in k$ ($\omega^3=1\ne \omega$), and let
$(T,\{x,y,z\})$ be a Jordan triple system. Then $(T,xy,\{x,y,z\})$
is a generalized Malcev algebra with $xy= 0$ for any $x,y\in T$,
and the linear map
\[
\begin{split}
\frg(T)&\rightarrow \calK(T)\\
w_{\omega}\otimes x&\mapsto x\\
w_{\omega^2}\otimes x&\mapsto \hat x\\
u\otimes d^+&\mapsto (d^+,d^+)\\
\hat u'\otimes d^-&\mapsto (d^-,-d^-)
\end{split}
\]
for $x\in T$,
$d^+\in\espan{d_{x,y}^+=\frac{1}{2}\bigl(\{x,y,.\}-\{y,x,.\}\bigr):
x,y\in T}$ and
$d^-\in\espan{d_{x,y}^-=\frac{1}{2}\bigl(\{x,y,.\}+\{y,x,.\}\bigr):x,y\in
T}$, is an isomorphism of Lie algebras.
\end{proposition}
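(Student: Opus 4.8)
The plan is to read off the bracket of $\frg(T)$ from Remark~\ref{re:omegaink} and compare it, relation by relation, with the bracket of $\calK(T)$. Since the bilinear product on $T$ is trivial, that remark identifies $\frg(T)$ with $\frd^+\oplus\frd^-\oplus\nu_0(T)\oplus\nu_1(T)$, where the nonzero brackets are $[d^+,\nu_i(x)]=\nu_i(d^+(x))$, $[d^-,\nu_i(x)]=(-1)^i\nu_i(d^-(x))$, and $[\nu_0(x),\nu_1(y)]=d_{x,y}^++d_{x,y}^-$ (the brackets $[\nu_i(x),\nu_i(y)]=\nu_{1-i}(xy)$ all vanish), together with the commutator bracket on the Lie subalgebra $\frd^+\oplus\frd^-\subseteq\frgl(T)$. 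Under the stated map these become $\nu_0(x)\mapsto x$, $\nu_1(x)\mapsto\hat x$, $d^+\mapsto(d^+,d^+)$, and $d^-\mapsto(d^-,-d^-)$, so that an element $d^++d^-$ of $\frd^+\oplus\frd^-$ is sent to $(d^++d^-,\,d^+-d^-)$.

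First I would check that this is a linear isomorphism onto $\calK(T)=T\oplus\frs(T)\oplus\hat T$. On the copies of $T$ the map $\nu_0(T)\oplus\nu_1(T)\to T\oplus\hat T$ is clearly bijective. On $\frd=\frd^+\oplus\frd^-$ the assignment $d^++d^-\mapsto(d^++d^-,\,d^+-d^-)$ is injective, since the two components recover $d^+$ and $d^-$; and its image is exactly $\frs(T)$, because $d_{x,y}^++d_{x,y}^-=\{x,y,.\}$ is sent to $(\{x,y,.\},-\{y,x,.\})$, which are precisely the spanning elements of $\frs(T)$. The one structural point here is that $\frd$ is the $\bZ_2$-graded Lie subalgebra of $\frgl(T)$ with even part $\frd^+$ and odd part $\frd^-$ (established in the proof of Theorem~\ref{th:S3}), so its grading automorphism $\sigma$ (the identity on $\frd^+$ and $-\mathrm{id}$ on $\frd^-$) is a Lie algebra automorphism. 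The restriction of our map to $\frd$ is then $d\mapsto(d,\sigma(d))$, a pair of Lie homomorphisms $\frd\to\frgl(T)$ (the inclusion and its $\sigma$-twist), hence itself a homomorphism into $\frgl(T)\oplus\frgl(T)$ with image the subalgebra $\frs(T)$.

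It remains to match the mixed brackets, which is a direct comparison of the formulas above with the defining relations of $\calK(T)$: the bracket $[\nu_0(x),\nu_1(y)]=d_{x,y}^++d_{x,y}^-$ maps to $(\{x,y,.\},-\{y,x,.\})=[x,\hat y]$; the relations $[(d^+,d^+),x]=d^+(x)$ and $[(d^+,d^+),\hat x]=\widehat{d^+(x)}$ reproduce $[d^+,\nu_0(x)]=\nu_0(d^+(x))$ and $[d^+,\nu_1(x)]=\nu_1(d^+(x))$; while $[(d^-,-d^-),x]=d^-(x)$ and $[(d^-,-d^-),\hat x]=-\widehat{d^-(x)}$ reproduce $[d^-,\nu_0(x)]=\nu_0(d^-(x))$ and $[d^-,\nu_1(x)]=-\nu_1(d^-(x))$; finally $[\nu_i(x),\nu_i(y)]=0$ matches $[T,T]=[\hat T,\hat T]=0$. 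I expect no real obstacle in this proof: the only idea needed is to recognize the sign $d^-\mapsto-d^-$ as the grading twist $\sigma$, which is exactly what makes the map on $\frd$ a homomorphism; everything else is bookkeeping of bracket formulas.
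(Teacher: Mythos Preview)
Your proposal is correct and is precisely the ``straightforward'' verification the paper alludes to: you unpack the bracket of $\frg(T)$ via Remark~\ref{re:omegaink}, observe that with $xy=0$ the only nonobvious point is that $d\mapsto(d,\sigma(d))$ is a Lie homomorphism on $\frd=\frd^+\oplus\frd^-$ (which follows from its $\bZ_2$-grading), and then match brackets term by term with those of $\calK(T)$. The paper's own proof consists of the single sentence that this is immediate from the definitions, so your argument is simply a faithful expansion of that.
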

\begin{proof}
The proof is straightforward from the different definitions
involved.
\end{proof}

\begin{example}\label{ex:so} \textbf{(Orthogonal Lie algebras)}\newline
Let us assume, for simplicity, that the ground field $k$ is algebraically closed, and let $\calV$ be a vector space of dimension $\geq 3$, endowed with a nondegenerate symmetric bilinear form $b$. Let $\{e_1,\ldots,e_n\}$ be a basis with $b(e_i,e_j)=\delta_{i,j}$. The symmetric group $S_3$ acts naturally by isometries by permuting the first three vectors in this basis and leaving the remaining ones fixed. Then $\calV$ decomposes, as a module for $S_3$, into the orthogonal sum of a submodule isomorphic to the two dimensional irreducible module $W$ and a direct sum of $n-2$ trivial modules. Thus, we may think of $\calV$ as the orthogonal sum $W\perp E$ with the restriction of $b$ to $W$ given by the bilinear form $(.\,\vert\, .)$ in \eqref{eq:WWWU} (by invariance under the action of $S_3$), and where $S_3$ acts trivially on $E$.

The orthogonal Lie algebra $\frg=\frso(\calV,b)$ is spanned by the maps:
\[
\sigma_{u,v}:w\mapsto b(u,w)v-b(v,w)u
\]
for any $u,v,w\in \calV$. Note that these maps satisfy
\[
[\sigma_{u,v},\sigma_{x,y}]
 =\sigma_{\sigma_{u,v}(x),y}+\sigma_{x,\sigma_{u,v}(y)}
\]
for any $u,v,x,y$.

The action of the symmetric group $S_3$ on $\calV$ induces an action on $\frso(\calV,b)$, where for any $\gamma\in S_3$ and $f\in \frso(\calV,b)$, $\gamma\cdot f=\gamma f\gamma^{-1}$. In this way, $\gamma\cdot \sigma_{u,v}=\sigma_{\gamma(u),\gamma(v)}$ for any $u,v\in \calV$. Then there is the natural decomposition $\frg=\frso(W)\oplus\frso(E)\oplus \sigma_{W,E}\simeq \frso(W)\oplus\frso(E)\oplus W\otimes E$, where $\frso(W)$ acts trivially on $E$ and $\frso(E)$ acts trivially on $W$. Besides, the one dimensional Lie algebra $\frso(W)$ is spanned by $\sigma_{w_\omega,w_{\omega^2}}$.

The action of the cycle $\varphi$ decomposes $\frg$ into the direct sum of its eigenspaces, where $\frg_1=\frso(W)\oplus\frso(E)$, $\frg_\omega=\sigma_{w_\omega,E}$ and $\frg_{\omega^2}=\sigma_{w_{\omega^2},E}$. (Recall that $\varphi(w_\omega)=\omega w_\omega$ and $\varphi(w_{\omega^2})=\omega^2w_{\omega^2}$) The attached generalized Malcev algebra is defined then on $\frg_\omega$ by Remark \ref{re:omegaink}. Note that for any $x,y,z\in E$:
\[
\begin{split}
[\tau(\sigma_{w_\omega,x}),\tau(\sigma_{w_\omega,y})]&=
  [\sigma_{w_{\omega^2},x},\sigma_{w_{\omega^2},y}]\\
  &=\sigma_{\sigma_{w_{\omega^2},x}(w_{\omega^2}),y}
    +\sigma_{w_{\omega^2},\sigma_{w_{\omega^2},x}(y)}=0,\\
[[\sigma_{w_\omega,x},\tau(\sigma_{w_\omega,y})],\sigma_{w_\omega,z}]&=
   [[\sigma_{w_\omega,x},\sigma_{w_{\omega^2},y}],\sigma_{w_\omega,z}]\\
   &=[\sigma_{x,y}+b(x,y)\sigma_{w_\omega,w_{\omega^2}},\sigma_{w_\omega,z}]\\
   &=\sigma_{w_\omega,\sigma_{x,y}(z)-b(x,y)z}\\
   &=\sigma_{w_\omega,b(x,z)y-b(y,z)x-b(x,y)z},
\end{split}
\]
which shows that the attached generalized Malcev algebra can be identified with $(E,xy,\{x,y,z\})$ with $xy=0$ and $\{x,y,z\}=b(x,z)y-b(y,z)x-b(x,y)z$ for any $x,y,z\in E$. This is the Jordan triple system associated to the nondegenerate symmetric bilinear form $-b\vert_E$ (see \cite{Meyberg}).\hfill\qed
\end{example}

\smallskip

Another easy example, which is neither a Malcev algebra nor a Jordan triple system is obtained with similar arguments, but with $\frso(\calV)$ substituted by $\frsl(\calV)$:

\begin{example}\label{ex:sl} \textbf{(Special Lie algebras)}\newline
Let $\calV$ be a vector space of dimension $\geq 3$ over an algebraically closed field $k$, and take a basis $\{e_1,\ldots, e_n\}$. Then $S_3$ acts by permuting the first three elements of this basis. As in Example \ref{ex:so}, this decomposes $\calV$ into a direct sum $\calV=W\oplus E$, where $W$ is the two dimensional irreducible module for $S_3$ and $E$ is a vector space on which $S_3$ acts trivially. Identify the general linear Lie algebra $\frgl(\calV)$ with $\calV\otimes \calV^*$ ($v\otimes f\in \calV\otimes \calV^*\leftrightarrow f(.)v\in\frgl(\calV)$, where $f(.)v$ is the linear map $w\mapsto f(w)v$ for any $v,w\in \calV$ and $f\in\calV^*$). Recall that $W$ is endowed with a unique (up to scalars) nondegenerate symmetric bilinear form $(.\,\vert\,.)$ invariant under the action of $S_3$ (see equation \eqref{eq:WWWU}) that allows us to identify $W$ and $W^*$. Thus:
\[
\begin{split}
\frgl(W\oplus E)&\simeq (W\otimes W^*)\oplus(E\otimes E^*)\oplus (W\otimes E^*)\oplus(E\otimes W^*)\\
 &\simeq \frgl(W)\oplus\frgl(E)\oplus (W\otimes E^*)\oplus (E\otimes W^*).
\end{split}
\]
The action of $S_3$ on $\calV$ induces an action of $S_3$ by conjugation on $\frgl(\calV)$, which leaves fixed $\frgl(E)$. Under this action, $\frg=\frgl(W\oplus E)$ decomposes as the direct sum of the eigenspaces for $\varphi$, and the $\omega$ eigenspace is, with the identifications above,
\[
\frg_\omega=\espan{w_{\omega^2}\otimes(w_{\omega^2}\,\vert\, .)}\oplus (w_\omega\otimes E^*)\oplus (E\otimes (w_\omega\,\vert\, .)).
\]
Besides:
\[
\begin{split}
[\tau\bigl(w_{\omega^2}\otimes(w_{\omega^2}\,\vert\, .)\bigr),\tau\bigl(w_\omega\otimes f\bigr)]&=
  [w_{\omega}\otimes(w_{\omega}\,\vert\, .),w_{\omega^2}\otimes f]\\
  &=w_\omega\otimes f,\\[4pt]
[\tau\bigl(w_{\omega^2}\otimes(w_{\omega^2}\,\vert\, .)\bigr),
  \tau\bigl(e\otimes (w_\omega\,\vert\,.)\bigr)]&=
  [w_{\omega}\otimes(w_{\omega}\,\vert\, .),e\otimes (w_{\omega^2}\,\vert\,.)]\\
  &=-e\otimes (w_\omega\,\vert\,.)\\[4pt]
[\tau(\bigl(e\otimes (w_\omega\,\vert\,.)\bigr),\tau\bigl(w_\omega\otimes f\bigr)]
 &=[e\otimes (w_{\omega^2}\,\vert\, .),w_{\omega^2}\otimes f]\\
 &=-f(e)w_{\omega^2}\otimes(w_{\omega^2}\,\vert\,.).
\end{split}
\]
This means that the attached generalized Malcev algebra can be identified, as a vector space, to $ka\oplus E\oplus E^*$, by means of $a\leftrightarrow w_{\omega^2}\otimes(w_{\omega^2}\,\vert\,.)$, $e\leftrightarrow e\otimes(w_\omega\,\vert\, .)$ and $f\leftrightarrow w_\omega\otimes f$, for any $e\in E$ and $f\in E^*$. Then the anticommutative bilinear product becomes:
\[
ae=-e,\quad af=f,\quad fe=f(e)a
\]
for any $e\in E$ and $f\in E^*$. If $\dim E\geq 2$, this is not a Malcev algebra.

Also,
\[
\begin{split}
[w_{\omega^2}\otimes(w_{\omega^2}\,\vert\, .),
  \tau(w_{\omega^2}\otimes(w_{\omega^2}\,\vert\, .)]&=
  [w_{\omega^2}\otimes(w_{\omega^2}\,\vert\, .),
    w_{\omega}\otimes(w_{\omega}\,\vert\, .)]\\
  &=w_{\omega^2}\otimes(w_{\omega}\,\vert\, .)
      -w_{\omega}\otimes(w_{\omega^2}\,\vert\, .),\\[4pt]
[w_{\omega^2}\otimes(w_{\omega^2}\,\vert\, .),\tau\bigl(w_\omega\otimes f\bigr)]
  &=0,\\
[w_{\omega^2}\otimes(w_{\omega^2}\,\vert\, .),
 \tau\bigl(e\otimes (w_\omega\,\vert\,.)\bigr)]&=0,\\
[w_\omega\otimes f_1,w_\omega\otimes f_2]&=0,\\
[e_1\otimes (w_\omega\,\vert\,.),
    \tau\bigl(e_2\otimes (w_\omega\,\vert\,.)\bigr)]&=0,\\[4pt]
[w_\omega\otimes f,\tau\bigl(e\otimes (w_\omega\,\vert\,.)\bigr)]
  &=[w_\omega\otimes f,e\otimes(w_{\omega^2}\,\vert\,.)]\\
  &=f(e)w_\omega\otimes(w_{\omega^2}\,\vert\, .)-e\otimes f,\\[4pt]
[e\otimes(w_\omega\,\vert\,.),\tau(w_\omega\otimes f)]
  &=[e\otimes(w_\omega\,\vert\,.),w_{\omega^2}\,\vert\, .)]\\
  &=e\otimes f-f(e)w_{\omega^2}\otimes(w_\omega\,\vert\, .),
\end{split}
\]
for any $e,e_1,e_1\in E$ and $f,f_1,f_2\in E^*$. With our identification of $\frg_\omega$ with $ka\oplus E\oplus E^*$, it follows easily that the nonzero triple products here are given by:
\[
\begin{split}
&\{a,a,a\}=2a,\ \{a,a,e\}=-e,\ \{a,a,f\}=-f,\\
&\{e,f,a\}=-f(e)a=\{f,e,a\},\\
&\{e,f,e'\}=f(e')e+f(e)e',\quad\{f,e,e'\}=-f(e')e,\\
&\{e,f,f'\}=-f'(e)f,\quad\{f,e,f'\}=f'(e)f+f(e)f',
\end{split}
\]
for any $e,e'\in E$ and $f,f'\in E^*$.
\hfill\qed
\end{example}

\medskip

Before proceeding with more examples, let us show a final connection
of the generalized Malcev algebras to other kind of algebraic
structures.

\begin{remark}\label{re:LieYamaguti}
Let $(M,xy,\{x,y,z\})$ be a generalized Malcev algebra and
let
\[
\frg(M)=(U\otimes \frd^+)\oplus(U'\otimes \frd^-)\oplus (W\otimes M)
\]
be the associated Lie algebra with $S_3$-symmetry. Then $\tau$
becomes an order two automorphism which grades $\frg(M)$ over
$\bZ_2$. The fixed subalgebra by $\tau$ is
\[
\frg_0(M)=(U\otimes \frd^+)\oplus(w_+\otimes M),
\]
whose Lie bracket works as follows:
\[
\begin{split}
[u\otimes d_1,u\otimes d_2]&=u\otimes [d_1,d_2],\\
[u\otimes d_1,w_+\otimes x]&=w_+\otimes d(x),\\
[w_+\otimes x,w_+\otimes y]&=u\otimes [x,y,.]+w_+\otimes xy,
\end{split}
\]
for any $d,d_1,d_2\in \frd^+$ and $x,y\in M$, where
$[x,y,.]=\{x,y,.\}-\{y,x,.\}$. This algebra is isomorphic to the Lie
algebra
\[
\tilde\frg_0(M)=[M,M,.]\oplus M
\]
with $[M,M,.]=\espan{[x,y,.]: x,y\in M}\subseteq\frgl(M)$, and bracket
given by imposing that $[M,M,.]$ is a subalgebra, $M$ its natural
module, and $[x,y]=[x,y,.]+xy$ for any $x,y\in M$. Thus
$(M,xy,[x,y,z])$ is a Lie-Yamaguti algebra (or general Lie triple
system in Yamaguti's notation \cite{Yamaguti}) and $\tilde\frg_0(M)$ is its
standard enveloping Lie algebra (see \cite{Kinyon}).\hfill\qed
\end{remark}

\bigskip

Another family of examples of generalized Malcev algebras comes from
the second line of Freudenthal's Magic Square.

Let $K=ke+kz$ be the quadratic commutative associative algebra with
$ex=x$ for any $x$ and $z^2=-3e$. If $\sqrt{-3}\not\in k$, then $K$
is the field extension $k[\sqrt{-3}]$, otherwise $K$ is isomorphic
to $k\times k$. This is a composition algebra relative to the norm
$q$, that is $q(xy)=q(x)q(y)$ for any $x,y\in K$, where $q(e)=1$, $q(z)=3$ and
$q(e,z)\bigl(=q(e+z)-q(e)-q(z)\bigr)=0$. Let $(K,\bullet,q)$ be the
associated para-Hurwitz algebra, where $x\bullet y=\overline{xy}$
($\bar e=e$, $\bar z=-z$), that is,
\[
e\bullet e=e,\quad e\bullet z=-z=z\bullet e,\quad z\bullet z=-3e.
\]
The set of nonzero idempotents of $(K,\bullet)$ is
$\{e,-\frac{1}{2}e+\frac{1}{2}z,-\frac{1}{2}e-\frac{1}{2}z\}$, and
its group of automorphisms $\Aut(K,\bullet)$ permutes these
idempotents. From here it follows that $\Aut(K,\bullet)\simeq S_3$,
where the action of $\varphi$ and $\tau$ are given by:
\[
\left\{\begin{aligned} &\tau(e)=e,\ \tau(z)=-z,\\
 &\varphi(e)=-\frac{1}{2}e+\frac{1}{2}z,\
 \varphi(z)=-\frac{3}{2}e-\frac{1}{2}z.\end{aligned}\right.
\]
Actually, as a module for $S_3$, $K$ is isomorphic to the two
dimensional module $W$: $e\leftrightarrow w_+$, $z\leftrightarrow
w_-$. Under this linear isomorphism, the multiplication in
\eqref{eq:wsbullet} corresponds to the multiplication $\bullet$ on
$K$, while the symmetric bilinear form $(.\,\vert\, .)$ in
\eqref{eq:wsbilinear} corresponds to the polar form $q(.,.)$ of the
norm $q$ of $K$.

The triality Lie algebra of $(K,\bullet,q)$ is
\[
\begin{split}
\tri(&K,\bullet,q)\\
&=\{(d_0,d_1,d_2)\in\frso(K,q): d_0(x\bullet y)=d_1(x)\bullet
y+x\bullet d_2(y)\ \forall x,y\in K\}.
\end{split}
\]
But the orthogonal Lie algebra $\frso(K,q)$ has dimension $1$, being
spanned by $\sigma=\sigma_{e,z}=q(e,.)z-q(z,.)e$. That is,
\[
\sigma:\begin{cases} e\mapsto q(e,e)z-q(e,z)e=2z,\\
 z\mapsto q(e,z)z-q(z,z)e=-6e. \end{cases}
\]
It follows that
\[
\tri(K,\bullet,q)=\{(\alpha_0\sigma,\alpha_1\sigma,\alpha_2\sigma):
\alpha_0,\alpha_1,\alpha_2\in k,\ \alpha_0+\alpha_1+\alpha_2=0\}.
\]
Under the identification above of $W$ and $K$, the action of
$\sigma$ on $K$ corresponds exactly to the action of $u'$ on $W$
($u'\diamond w_+=2w_+$, $u'\diamond w_-=-6w_-$ because of
\eqref{eq:UprimeUprime}).

The action of $S_3$ on $K$ induces an action on $\tri(K,\bullet,q)$
by
\[
\gamma\bigl((d_0,d_1,d_2)\bigr)=(\gamma d_0\gamma^{-1},\gamma
d_1\gamma^{-1},\gamma d_2\gamma^{-1}),
\]
for any $\gamma\in S_3=\Aut(K,\bullet)$ and
$(d_0,d_1,d_2)\in\tri(K,\bullet,q)$. But $\tau\sigma\tau=-\sigma$
and $\varphi\sigma\varphi^2=\sigma$, so $\tri(K,\bullet,q)$
decomposes into the direct sum of two copies of the alternating
module. Thus, as $S_3$-modules, consider the identification:
\begin{equation}\label{eq:triKUprime}
\begin{split}
\tri(K,\bullet,q)&\simeq U'\otimes (k^3)_0\\
(\alpha_0\sigma,\alpha_1\sigma,\alpha_2\sigma)&\leftrightarrow
u'\otimes (\alpha_0,\alpha_1,\alpha_2),
\end{split}
\end{equation}
where $(k^3)_0=\{(\alpha_0,\alpha_1,\alpha_2)\in k^3:
\alpha_0+\alpha_1+\alpha_2=0\}$.

Given any other symmetric composition algebra $(S,*,\hat q)$ (see
\cite[Chapter VIII]{KMRT} and the references there in), a Lie
algebra is constructed in \cite{EldIbero04} on the vector space
\[
\frg(K,S)=\tri(K,\bullet,q)\oplus\tri(S,*,\hat
q)\oplus\Bigl(\oplus_{i=0}^2\iota_i(K\otimes S)\Bigr),
\]
where $\iota_i(K\otimes S)$ denotes a copy of $K\otimes S$,
$i=0,1,2$. The Lie bracket is defined by:
\begin{itemize}
\item $\tri(K,\bullet,,q)$ and $\tri(S,*,\hat q)$ are two commuting Lie
subalgebras of $\frg(K,S)$ (so
 $[\tri(K,\bullet,,q),\tri(S,*,\hat q)]=0$),

\item $[(d_0,d_1,d_2),\iota_i(a\otimes x)]=\iota_i(d_i(a)\otimes x)$,
$[(\hat d_0,\hat d_1,\hat d_2),\iota_i(a\otimes x)]=\iota_i(a\otimes
\hat d_i(x))$, for $i=0,1,2$, $a\in K$, $x\in S$,
$(d_0,d_1,d_2)\in\tri(K,\bullet,q)$ and $(\hat d_0,\hat d_1,\hat
d_2)\in\tri(S,*,\hat q)$,

\item $[\iota_i(a\otimes x),\iota_{i+1}(b\otimes
y)]=\iota_{i+2}((a\bullet b)\otimes(x*y))$, for $i=0,1,2$ (indices
modulo $3$), $a,b\in K$ and $x,y\in S$,

\item $[\iota_i(a\otimes x),\iota_i(b\otimes y)]=\hat
q(x,y)\theta^i(t_{a,b})+q(a,b)\theta^i(t_{x,y})$ for $i=0,1,2$,
$a,b\in K$ and $x,y\in S$, where $\theta:(d_0,d_1,d_2)\mapsto
(d_2,d_0,d_1)$,
$t_{a,b}=\bigl(\sigma_{a,b},\frac{1}{2}q(a,b)I-R_aL_b,\frac{1}{2}q(a,b)I-L_aR_b\bigr)$ and similarly for $t_{x,y}$.
As usual $R$ and $L$ denote right and left multiplications, $I$ is
the identity mapping, and $\sigma_{a,b}=q(a,.)b-q(b,.)a$).
\end{itemize}

Depending on the dimension of $S$ being $1$, $2$, $4$ or $8$, the
Lie algebra $\frg(K,S)$ is a form respectively of $\frsl_3(k)$,
$\frsl_3(k)\oplus\frsl_3(k)$, $\frsl_6(k)$ or the simple Lie algebra
of type $E_6$. That is, we get the second line of Freudenthal's
Magic Square \cite{EldIbero04}.

Now, the action of $S_3$ by automorphisms of $K$ extends to an
action of $S_3$ by automorphisms of $\frg(K,S)$, with the action of any $\gamma\in S_3$ is given by:
\begin{itemize}
\item $\gamma\bigl(\iota_i(a\otimes
x)\bigr)=\iota_i(\gamma(a)\otimes x)$, for any $a\in K$, $x\in S$,
$i=0,1,2$,

\item $\gamma\bigl((d_0,d_1,d_2)\bigr)=(\gamma d_0\gamma^{-1},\gamma
d_1\gamma^{-1},\gamma d_2\gamma^{-1})$ for any $(d_0,d_1,d_2)$ in
\newline $\tri(K,\bullet,q)$,

\item $\gamma\bigl((\hat d_0,\hat d_1,\hat d_2)\bigr)
=(\hat d_0,\hat d_1,\hat d_2)$ for any $(\hat d_0,\hat d_1,\hat
d_2)\in\tri(S,*,\hat q)$.
\end{itemize}

Thus, $\tri(S,*,q)$ is a trivial module for $S_3$,
$\tri(K,\bullet,q)$ is already known to be isomorphic to two copies
of the alternating module by \eqref{eq:triKUprime}:
$\tri(K,\bullet,q)\simeq U'\otimes (k^3)_0$, while
$\oplus_{i=0}^2\iota_i(K\otimes S)$ is a sum of $3\times \dim S$
copies of the two dimensional irreducible module $W\simeq K$.

Consider then the natural identification:
\[
\begin{split}
\frg(K,S)&\simeq \bigl(U\otimes\tri(S,*,q)\bigr)
   \oplus\bigl(U'\otimes (k^3)_0\bigr))\oplus
   \bigl(W\otimes (\oplus_{i=0}^2\iota_i(S))\bigr),\\
 (\hat d_0,\hat d_1,\hat d_2)&\leftrightarrow u\otimes (\hat d_0,\hat d_1,\hat
 d_2),\\
 (\alpha_0\sigma,\alpha_1\sigma,\alpha_2\sigma)&\leftrightarrow
 u'\otimes (\alpha_0,\alpha_1,\alpha_2),\\
 \iota_i(a\otimes x)&\leftrightarrow a\otimes \iota_i(x),
\end{split}
\]
for $(\hat d_0,\hat d_1,\hat d_2)\in\tri(S,*,\hat q)$,
$(\alpha_0,\alpha_1,\alpha_2)\in(k^3)_0$, $\sigma=\sigma_{e,z}$ as
above, $i=0,1,2$, $a\in K\simeq W$, and $x\in S$.

The Lie bracket on $\frg(K,S)$ translates into a bracket on the
right which proves the validity of the following example:

\begin{example} \textbf{(Second line of Freudenthal's Magic Square)}\newline
Let $(S,*,\hat q)$ be a symmetric composition algebra and let
$M=\oplus_{i=0}^2\iota_i(S)$, be the direct sum of three copies of
$S$. Then $\bigl(M,XY,\{X,Y,Z\}\bigr)$ is a generalized Malcev
algebra, where for $i,j,k\in\{0,1,2\}$ and $x,y,z\in S$:
\begin{itemize}
\item $\iota_i(x)\iota_{i+1}(y)=\iota_{i+2}(x*y)$,

\item $\iota_i(x)\iota_i(y)=0$,

\item $\{\iota_i(x),\iota_j(y),\iota_k(z)\}=0$, if $i\ne j$,

\item
$\{\iota_i(x),\iota_i(y),\iota_{i+1}(z)\}=-\iota_{i+1}((y*z)*x)$,

\item
$\{\iota_i(x),\iota_i(y),\iota_{i+2}(z)\}=-\iota_{i+2}(x*(z*y))$,

\item
$\{\iota_i(x),\iota_i(y),\iota_i(z)\}=\iota_i\bigl(\hat q(x,z)y-\hat
q(y,z)x+\hat q(x,y)z\bigr)$.\hfill\qed
\end{itemize}
\end{example}

\smallskip

Notice that, for example, for $x,y\in S$,
\[
[\iota_i(e\otimes
x),\iota_i(e\otimes y)]=q(e,e)\theta^i(t_{x,y})=2\theta^i(t_{x,y}),
\]
and this translates to
\[
[w_+\otimes x,w_+\otimes y]=2u\otimes
\theta^i(t_{x,y})=(w_+\,\vert\, w_+)u\otimes \theta^i(t_{x,y}).
\]
This
gives $\iota_i(x)\iota_i(y)=0$ and
\[
d_{\iota_i(x),\iota_i(y)}^+(\iota_j(z))=
 \begin{cases}
  \iota_i\bigl(\sigma_{x,y}(z)\bigr)&\text{if $j=i$,}\\
  \iota_{i+1}\bigr(\frac{1}{2}\hat q(x,y)z-(y*z)*x\bigr)&\text{if $j=i+1$ (modulo $3$),}\\
  \iota_{i+2}\bigr(\frac{1}{2}\hat q(x,y)z-x*(z*y)\bigr)&\text{if $j=i+2$
  (modulo $3$).}\end{cases}
\]
Also $[\iota_i(e\otimes x),\iota_i(z\otimes y)]=\hat
q(x,y)\theta^i(t_{e,z})$, and since
$t_{e,z}=(\sigma,-\frac{1}{2}\sigma,-\frac{1}{2}\sigma)$, this gives
$[w_+\otimes x,w_-\otimes y]=\hat q(x,y) u'\otimes
\theta^i\bigl(1,-\frac{1}{2},-\frac{1}{2}\bigr)$ (where again
$\theta$ denotes the cyclic permutation of the components). Thus,
\[
d_{\iota_i(x),\iota_i(y)}^-(\iota_j(z))=
 \begin{cases}
  \iota_i\bigl(\hat q(x,y)z\bigr)&\text{if $j=i$,}\\
  \iota_{i+1}\bigl(-\frac{1}{2}\hat q(x,y)z\bigr)&\text{if $j=i+1$ (modulo $3$),}\\
  \iota_{i+2}\bigl(-\frac{1}{2}\hat q(x,y)z\bigr)&\text{if $j=i+2$
  (modulo $3$).}
 \end{cases}
\]
The different items in the example above follow at once from these
computations.


%

\def\cprime{$'$}
\providecommand{\bysame}{\leavevmode\hbox
to3em{\hrulefill}\thinspace}
\providecommand{\MR}{\relax\ifhmode\unskip\space\fi MR }
\providecommand{\MRhref}[2]{%
  \href{http://www.ams.org/mathscinet-getitem?mr=#1}{#2}
} \providecommand{\href}[2]{#2}

\end{document}